\newtheorem{theorem}{Theorem}
\newtheorem{defi}[theorem]{Definition}
\newtheorem{lemma}[theorem]{Lemma}
\newtheorem{coro}[theorem]{Corollary}
\newtheorem{proposition}[theorem]{Proposition}
\newtheorem{remark}[theorem]{Remark}
\newtheorem{example}[theorem]{Example}
\begin{document}
	
	\title[Harish-Chandra theorem for two-parameter quantum groups]{Harish-Chandra Theorem for Two-parameter \\ Quantum Groups}

	\author[N.H. Hu]{Naihong Hu$^*$}
	\address{School of Mathematical Sciences,  MOE Key Laboratory of Mathematics and Engineering Applications \& Shanghai Key Laboratory of PMMP, East China Normal University, Shanghai 200241, China}
	\email{nhhu@math.ecnu.edu.cn}
	
	\author[H.Y. Wang]{Hengyi Wang}
	\address{School of Mathematical Sciences, East China Normal University, Shanghai 200241, China}
	\email{52265500001@stu.ecnu.edu.cn}

	\thanks{This work is
		supported by the NNSF of China (Grant No. 12171155), and in part by the Science and Technology Commission of Shanghai Municipality (Grant No. 22DZ2229014), the Research Fund of Jianghan Univ. (No. 2023JCYJ08).}

	\subjclass{Primary 17B37, 81R50; Secondary 17B35}
	\date{2024.12.24}
	
	\begin{abstract}
		
	This paper is devoted to investigating the centre of two-parameter quantum groups $U_{r,s}(\mathfrak{g})$ via establishing the Harish-Chandra homomorphism. Based on the Rosso form and the representation theory of weight modules, we prove that when rank $\mathfrak{g}$ is even, the Harish-Chandra homomorphism is an isomorphism, and in particular, the centre of the quantum group $\breve{U}_{r,s}(\mathfrak{g})$ of the weight lattice type is a polynomial algebra $\mathbb{K}[z_{\varpi_1},\cdots,z_{\varpi_n}]$, where canonical central elements $z_\lambda \; (\lambda \in \Lambda^+)$ are turned out to be uniformly expressed. For rank $\mathfrak{g}$ to be odd, we figure out a new invertible extra central generator $z_*$, which doesn't survive in $U_q(\mathfrak g)$, then the centre of $\breve{U}_{r,s}(\mathfrak{g})$ contains $\mathbb{K}[z_{\varpi_1},\cdots,z_{\varpi_n}]\otimes_\mathbb K\mathbb K[z_*^{\frac{1}{\ell}}, z_*^{-\frac{1}{\ell}}]$, where $\ell=2$, except  $\ell=4$ for $D_{2k+1}$.
		
	\end{abstract}
	
	\keywords{Two-parameter quantum groups, Centre, Harish-Chandra homomorphism, Rosso form}
	\maketitle

\section{Introduction}

In mathematics and physics, quantum groups, in a broad sense, are a class of non-commutative and non-cocommutative Hopf algebras. It was Drinfel'd  \cite{D86} and Jimbo \cite{J86} who  independently defined $U_q(\mathfrak g)$ as a $q$-deformation of the universal enveloping algebra $U(\mathfrak g)$ for any semisimple Lie algebra $\mathfrak g$. Such $q$-deformed objects provide the universal solutions for the quantum Yang-Baxter equation and numerous quantum invariants for knots or links even $3$-manifolds (e.g. \cite{Re90, ZGB91, Turaev94} and references therein).

A number of works on the centre of quantum groups $U_q(\mathfrak g)$ have been developed over the last three decades, including explicit generators and structures.
Rosso introduced quantum analogues of the Killing form and the Harish-Chandra map to investigate the structure of the centre $Z(U_q)$ \cite{Ro90}. Drinfel'd used the universal R-matrix to get a morphism from the representation ring of $\mathfrak{g}$ to $Z(U_q)$ \cite{D90} (see also the works of Reshetikhin et al. \cite{FRT88, Re90}). Subsequently, Tanisaki \cite{T90,T92}, as well as Joseph and Letzter \cite{JL92}, proved Harish-Chandra isomorphism theorem in different approaches. Connections between these constructions have been discussed in various works in the literature (e.g., see Baumann's \cite{B97}). A couple of years ago, Li-Xia-Zhang demonstrated that $Z(U_q)$ is isomorphic to a polynomial algebra in types $A_1,$ $B_n,$ $C_n,$ $D_{2n},$ $E_7,$ $E_8,$ $F_4,$ $G_2$, while in the remaining cases it is isomorphic to a quotient of polynomial algebra \cite{LXZ16, LXZ18}.
A general description of the centres of quantum groups $\breve{U}_q(\mathfrak{g})$ of weight lattice types was earlier considered by Etingof in \cite{E95}. He concluded that $Z(\breve{U}_q) \cong \mathbb{K}[c_{\varpi_1},\cdots,c_{\varpi_n}]$,
where $c_{\varpi_i}$ represents  the quantum partial trace of $\Gamma = R^{21}R$ on $L(\varpi_i)$ (from Reshetikhin et al.).
The theorem was in fact generalized to the affine cases \cite{E95}.
Dai supplemented a detailed proof for the theorem of $Z(\breve{U}_q)$ and an explicit formula of generator $c_\lambda$ \cite{D23} via an operator $\Gamma$ from the work of R.B. Zhang et al \cite{GZB91,ZGB91}. Recently, Luo-Wang-Ye settled the Harish-Chandra theorem for some quantum superalgebras \cite{LWY22}.

\begin{figure}[htbp]
	\includegraphics[height=3.6cm]{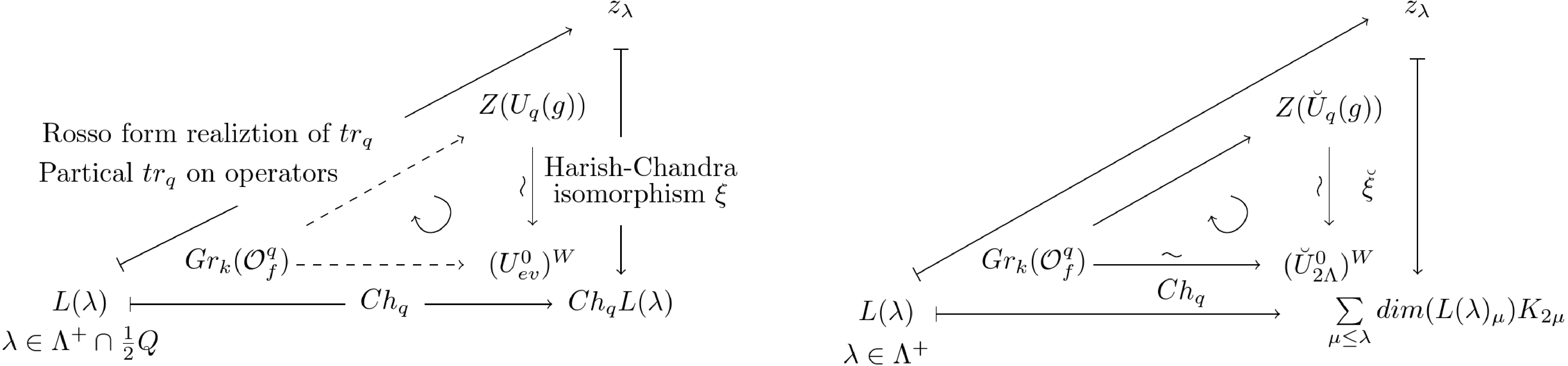}
	\caption{The central structure of the one-parameter quantum group.}
\end{figure}

Since Benkart-Witherspoon redefined a class of two-parameter quantum groups of type $A$ motivated by up-down algebra \cite{BW04}, and Bergeron-Gao-Hu studied the structures of two-parameter quantum groups of type $B, C, D$ \cite{BGH06}, the representation theory has progressed simultaneously \cite{BW04,BGH07}.
A series of work on other types have been done in \cite{BH08,HS07,HW09,PHR10,CHW16}, etc.

In this paper, we focus on the description of the centres of two-parameter quantum groups $U_{r,s}(\mathfrak{g})$ in the case when $r,s$ are indeterminates. We give a uniform way to prove that the Harish-Chandra homomorphism $\xi$ of $U_{r,s}(\mathfrak{g})$ is an isomorphism when $\mathfrak{g}$ is simple with even rank. This in particular recovers the work of Benkart-Kang-Lee for type $A_{2n}$ \cite{BKL06}, Hu-Shi for type $B_{2n}$ \cite{HS14} and Gan for type $G_2$ \cite{G10}. To do this, there are 3 steps to do: (i) $\xi : Z(U) \longrightarrow U^0$ is injective. (ii) The image $\text{Im}(\xi)$ is in the subalgebra $(U^0_\flat)^W$. (iii)  $\xi : Z(U) \longrightarrow  (U^0_\flat)^W$ is surjective.
With the help of the weight module theory established by \cite{BGH07, HP12, PHR10}, we find that step (i) and (ii) only rely on the non-degeneracy of matrices $R-S$ and $R+S$ derived from the  structure constant matrix.
Then we construct a certain element $z_\lambda \; (\lambda \in \Lambda^+ \cap Q)$ to realize the quantum trace $\operatorname{tr}_{r,s}$ on the weight module $L(\lambda)$ by the Rosso form, that is, $\langle z_\lambda \;|\; -\rangle = \operatorname{tr}_{L(\lambda)} (- \circ \Theta)$. These $z_\lambda$ are central elements which are used to prove (iii). Although we successfully establish the Harish-Chandra isomorphism (see the vertical map in the first diagram of Fig. 2),
owing to the fact that the Rosso form realization of $\operatorname{tr}_{r,s}$ and the quantum character both require $\lambda \in \Lambda^+ \cap Q$ (this is distinct from the one-parameter case: $\lambda \in \Lambda^+ \cap \frac{1}{2} Q$ ), they are not always well-defined on the whole $\operatorname{Gr}_k (\mathcal{O}^{r,s}_f) := \operatorname{Gr}(O^{r,s}_f) \otimes_{\mathbb{Z}} \mathbb{K}$.
\begin{figure}[htbp]
	\includegraphics[height=3.56cm]{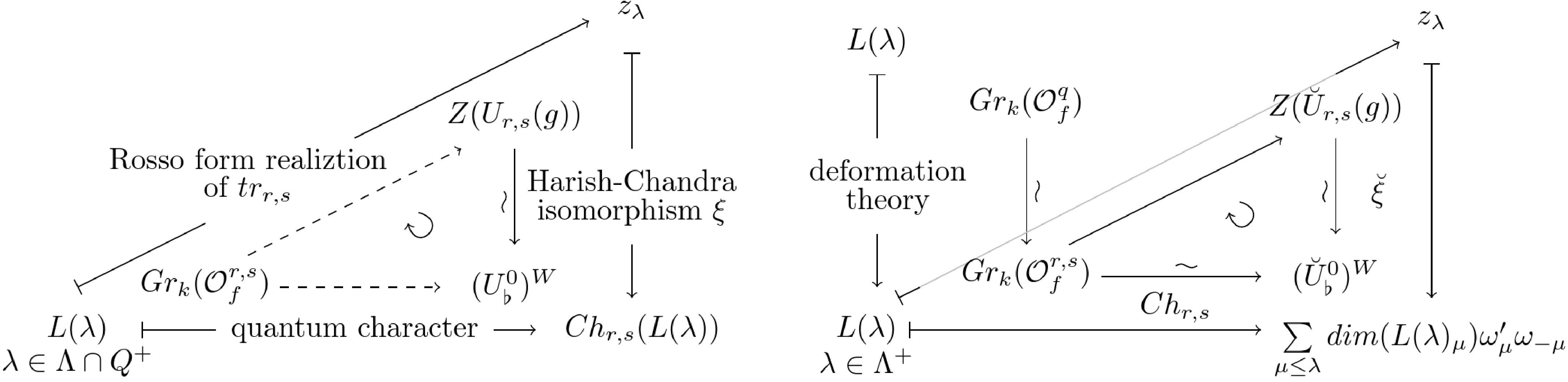}
	\caption{The central structure and Harish-Chandra theorem of $U_{r,s}(\mathfrak{g})$ with even rank.}
\end{figure}
To overcome this difficulty, we need to extend $U_{r,s}(\mathfrak{g})$ to its weight lattice type $\breve{U}_{r,s}(\mathfrak{g})$ such that all the maps above are well-defined (see the second diagram of Fig. 2). It turns out that the Harish-Chandra homomorphism $\breve{\xi}$ is also an isomorphism. Furthermore, the bottom map $\operatorname{Ch}_{r,s}$ is shown to be an isomorphism of algebras. Finally, the deformation theory on weight modules \cite{HP12} tells us that our $\mathcal{O}^{r,s}_f$  is equivalent to  the category of finite-dimensional weight modules $\mathcal{O}^q_f$ of $U_q(\mathfrak{g})$ as braided tensor categories. Hence the centre $Z(\breve{U}_{r,s}) \cong (\breve{U}^0_\flat)^W \cong \operatorname{Gr}_k (O^{r,s}_f) \cong \operatorname{Gr}_k (O^{q}_f)$, which is also a polynomial algebra as in the one-parameter setting, that is, $Z(\breve{U}_{r,s}) = \mathbb{K}[z_{\varpi_1},\cdots,z_{\varpi_n}]$.

In the case of odd rank, the Harish-Chandra homomorphism $\xi$ remains injective, but its image $\operatorname{Im}(\xi)$ cannot be explicitly characterized since the matrix $R+S$ has corank $1$. It provides a unique invertible central generator $z_*$ (it will degenerate to $1$ in the one-parameter case, i.e., $r=q,\, s=q^{-1},\, \omega'_i = \omega_i^{-1}$), which is a fixed point of $\xi$ satisfying $z_* \notin (U^0_\flat)^W$. This gives $\operatorname{Im}(\xi) \supseteq (U^0_\flat)^W \otimes \mathbb{K}[z_{*},z_{*}^{-1}]$ for $U_{r,s}(\mathfrak{g})$, which recovers the case of type $A_{2n+1}$ \cite{BKL06}, and we have $Z(\breve{U}_{r,s}(\mathfrak{g})) \supseteq \mathbb{K}[z_{\varpi_1},\cdots,z_{\varpi_n}]\otimes_\mathbb K \mathbb K[z_*^{\frac{1}{\ell}}, z_*^{-\frac{1}{\ell}}]$, where $\ell=2$, except $\ell=4$ for $D_{2k+1}$.

The paper is structured as follows.
Section 2 recalls two-parameter quantum groups with their Rosso forms, the Harish-Chandra homomorphisms and results in the related literature. Then we introduce two matrices relevant to the structure constant matrix.
Section 3 proves that the Harish-Chandra homomorphism $\xi$ is injective.
Section 4 characterizes the image of $\xi$ and proves that $\operatorname{Im}(\xi)$ falls in the subalgebra $(U^0_\flat)^W$ when rank $n$ is even.
Section 5 proves that the Harish-Chandra image $\xi(Z(U))\supseteq (U^0_\flat)^W$. Then the Harish-Chandra theorem $\xi:Z(U) \cong (U^0_\flat)^W$ holds when rank $n$ is even. While in the odd rank case, we construct a new extra generator $z_*$ of the centre, which leads to the Harish-Chandra image $\xi(Z(U)) \supseteq (U^0_\flat)^W \otimes \mathbb{K}[z_*^{\pm 1}]$.
We also give an alternative description to the central elements by taking partial quantum trace.
The last section introduces the extended two-parameter quantum group  $\breve{U}_{r,s}(\mathfrak{g})$ of weight lattice type and proves the Harish-Chandra theorem $\breve{\xi}: Z(\breve{U}_{r,s}(\mathfrak{g})) \cong (\breve{U}^0_\flat)^W$ when rank $n$ is even. Particularly, in this case the centre $Z(\breve{U}) = \mathbb{K}[z_{\varpi_1},\cdots,z_{\varpi_n}]$ is a polynomial algebra. While for the cases of odd rank $n$, the centre of $\breve{U}_{r,s}(\mathfrak{g})$ contains $\mathbb{K}[z_{\varpi_1},\cdots,z_{\varpi_n}]\otimes_\mathbb K \mathbb K[z_*^{\frac{1}{\ell}}, z_*^{-\frac{1}{\ell}}]$.

\section{Preliminaries}

\subsection{The two-parameter quantum group $U_{r,s}(\mathfrak{g})$ and its Hopf algebra structure}

A root system $\Phi$ of one complex simple Lie algebra $\mathfrak{g}$ is a finite subset of a Euclidean space $E$. We fix a simple root set of $\Phi$ and denote it by $\Pi.$ Let $W$ be the Weyl group of the root system $\Phi$ and $\sigma_i \in W$ be the simple reflection corresponding to the simple root $\alpha_i$. For data of $\Phi$ and $\Pi$, see Carter's book \cite[p.543]{C05}. We normalize the inner product $(-,-)$ such that $(\alpha,\alpha) = 2$ for all short roots $\alpha$. Let $C =(c_{ij})_{n\times n}$ be the Cartan matrix, where $c_{ij} = \frac{2(\alpha_i,\alpha_j)}{(\alpha_i,\alpha_i)} . $ 	

Denote $Q = \bigoplus_{i=1}^n \mathbb{Z} \alpha_i$ as the root lattice of $\mathfrak{g}.$	Put $\alpha_i^\vee = \frac{2\alpha_i}{(\alpha_i,\alpha_i)}$, let $\Lambda = \bigoplus_{i=1}^n \mathbb{Z} \varpi_i $ be the weight lattice and $\Lambda^+ = \bigoplus_{i=1}^n \mathbb{Z}_+ \varpi_i $ be the set of dominant integral weights, where $\varpi_i = \sum_{k=1}^n {c}^{ki} \alpha_{k}$  is the $i$-th fundamental weight, and $ ({c}^{ij})_{n\times n} = C^{-1}$ is the inverse of the Cartan matrix.

Let $r,s$ be two indeterminates and $\mathbb{Q}(r,s)$ be the rational functions field.
Let $\mathbb{K} \supseteq \mathbb{Q}(r,s)$ be a field that contains $r^{\frac{1}{m^2}},s^{\frac{1}{m^2}}$, where $m = \operatorname{min}\{k\in \mathbb{Z}^+ \mid k\Lambda \subseteq Q\} = \operatorname{det} (C).$
Let $r_i = r^{d_i} = r^{\frac{(\alpha_i,\alpha_i)}{2}}$ and $ s_i = s^{d_i}= s^{\frac{(\alpha_i,\alpha_i)}{2}}$ for $i=1,\dots,n$.
Let $A = (a_{ij})_{n\times n}$ be the structure constant matrix of $U_{r,s}(\mathfrak{g})$, which is given by the Euler form of $\mathfrak{g}$ as follows.

\begin{defi}\cite{HP08} \label{Euler}
	The Euler form of $\mathfrak{g}$ is the bilinear form $\langle - , -\rangle$ defined on the root lattice $Q$ satisfying
	\begin{align*}
		\langle i,j \rangle := \langle \alpha_i,\alpha_j \rangle = \begin{cases}
			& d_i c_{ij}  \hspace{1cm} i<j,\\
			& d_i         \hspace{1.4cm} i=j,\\			
			& 0           \hspace{1.6cm} i>j.			
		\end{cases}
	\end{align*}
	For type D, it is necessary to revise $\langle n-1,n \rangle = -1, \, \langle n, n-1 \rangle = 1$ $($\cite{BGH06,HP12}$)$.
	
	It can be linearly extended to the weight lattice $\Lambda$ such that
	\begin{align*}		
		\langle \varpi_i, \varpi_j \rangle := \sum_{k,l=1}^{n} c^{ki}c^{lj}\langle k,l \rangle
		.		\end{align*}		
\end{defi}

Write $a_{ij}= r^{\langle j,i \rangle} s^{- \langle i,j \rangle}$, and $A = (a_{ij})_{n\times n}$ is the structure constant matrix of $U_{r,s} (\mathfrak{g})$. Denote $ R_{ij} = \langle j,i \rangle$, $S_{ij}= - \langle i,j \rangle.$ Then we have $a_{ij} = r^{R_{ij}} s^{S_{ij}}$ and $R = -S^T.$

\begin{defi}
	\textup{(cf., \cite{BGH06,HP08})} \label{def} 		
	Let $U = U_{r,s} (\mathfrak{g})$ be the unital associative algebra over $\mathbb{K}$, generated by elements $e_i, f_i, \omega_i^{\pm 1}, {\omega'_i}^{\pm 1}\; (i=1,\cdots,n)$ satisfying the following relations $(X1)$---$(X4):$
	
	\hspace{1cm}
	
	$(X1)$ \, $\omega_i^{\pm 1} {\omega'_j}^{\pm 1} =  {\omega'_j}^{\pm 1}\omega_i^{\pm 1},  \qquad \omega_i \omega_i^{-1} = 1= \omega'_j {\omega'_j}^{-1},$
	
	$(X2)$ \, $\omega_{i} e_{j} \omega_{i}^{-1}= a_{ij} e_{j}, \hspace{0.7cm} \qquad \omega_{i} f_{j} \omega_{i}^{-1} ={a_{ij}}^{-1} f_{j},$
	
	\hspace{0.9cm} 	$\omega_{i}^{\prime} e_{j} \omega_{i}^{\prime-1} ={a_{ji}}^{-1} e_{j},  \hspace{0.3cm} \qquad
	\omega_{i}^{\prime} f_{j} \omega_{i}^{\prime-1} =a_{ji} f_{j},$
	
	$(X3)$ \, $[e_i, f_j] = \delta_{ij} \frac{\omega_i -\omega_i'}{r_i - s_i},$
	
	$(X4)$ \, $(\operatorname{ad}_l e_i)^{1-c_{ij}}(e_j) = 0,
	\qquad (\operatorname{ad}_r f_i)^{1-c_{ij}}(f_j) = 0,\qquad (i \neq j)$	
	
	\bigskip
	\noindent where the left (right)-adjoint action are as follows: for any $x,y \in U_{r,s} (\mathfrak{g}),$
	\begin{align*}
		\operatorname{ad}_l x (y) = \sum\limits_{(x)} x_{(1)} y S(x_{(2)}), & & \operatorname{ad}_r 	x (y) = \sum\limits_{(x)}S(x_{(1)}) y x_{(2)}.
	\end{align*}
	The comultiplication $\Delta(x) = \sum_{(x)} x_{(1)} \otimes x_{(2)}$ is given by Proposition \ref{Hopf} below.
\end{defi}

\begin{proposition}\textup{(cf., \cite{BGH06,HP08})} \label{Hopf}
	The algebra $U_{r,s} (\mathfrak{g})$ has a Hopf algebra structure $(U_{r,s} (\mathfrak{g}),$ $\Delta,\; \varepsilon,\; M,\; \iota,\; S ) $ with the comultiplication $\Delta$, the counit $\epsilon$ and the antipode $S$ defined below:
	\begin{align*}
		&\Delta(\omega_i^{\pm 1}) = \omega_i^{\pm 1} \otimes \omega_i^{\pm 1}, \hspace{2.5cm}
		\Delta({\omega'_i}^{\pm 1}) = {\omega'_i}^{\pm 1} \otimes {\omega'_i}^{\pm 1},\\
		&\Delta(e_i) = e_i \otimes 1 + \omega_i \otimes e_i \;, \hspace{2.035cm}
		\Delta(f_i) = 1 \otimes f_i + f_i \otimes {\omega'_i} \;,\\
		&\varepsilon(\omega_i^{\pm 1}) = \varepsilon({\omega'_i}^{\pm 1}) = 1, \hspace{2.5cm}
		\varepsilon(e_i) = \varepsilon(f_i) = 0,\\
		&S(\omega_i^{\pm 1}) = \omega_i^{\mp 1}, \hspace{3.65cm}
		S({\omega'_i}^{\pm 1}) = {\omega'_i}^{\mp 1} ,\\
		&S(e_i) = -\omega_i^{-1} e_i \;, \hspace{3.35cm}
		S(f_i) = -f_i {\omega'_i}^{-1}.
	\end{align*}
\end{proposition}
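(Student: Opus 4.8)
The plan is to prove the statement along the classical two-step route for quantum groups: first establish that the prescribed maps $\Delta$, $\varepsilon$, $S$ are well defined on $U = U_{r,s}(\mathfrak{g})$, and then verify the coassociativity, counit, and antipode axioms. Since $U$ is presented by the generators $e_i, f_i, \omega_i^{\pm 1}, {\omega_i'}^{\pm 1}$ modulo the relations $(X1)$--$(X4)$, it suffices to define $\Delta$ on generators by the given formulas, extend it as an algebra homomorphism $U \to U \otimes U$ (and $\varepsilon$ as an algebra homomorphism to $\mathbb{K}$, $S$ as an algebra anti-homomorphism), and check that each defining relation is sent to an identity. The Hopf axioms will then follow by checking them on generators, since all the maps involved are (anti-)multiplicative.

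Most of the relation checks are routine. The relations $(X1)$ involve only the group-like elements, hence are preserved by the multiplicative maps automatically. For $(X2)$ one expands, e.g., $\Delta(\omega_i)\Delta(e_j)\Delta(\omega_i)^{-1} = (\omega_i\otimes\omega_i)(e_j\otimes 1 + \omega_j\otimes e_j)(\omega_i^{-1}\otimes\omega_i^{-1})$ and recovers $a_{ij}\Delta(e_j)$ using $(X2)$ in $U$ and the commutativity of the $\omega$'s. For $(X3)$ the key point is that in the commutator $[\Delta(e_i),\Delta(f_j)]$ the two mixed terms cancel, because $\omega_i f_j\otimes e_i\omega_j' = f_j\omega_i\otimes\omega_j' e_i$ (the scalars $a_{ij}^{-1}$ and $a_{ij}$ produced by $(X2)$ cancel), leaving exactly $\delta_{ij}(r_i-s_i)^{-1}\bigl(\Delta(\omega_i)-\Delta(\omega_i')\bigr)$. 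For the counit, $\varepsilon$ annihilates every $e_i$ and $f_i$, and since $\varepsilon\circ\operatorname{ad}_l e_i = 0$, all of $(X2)$--$(X4)$ reduce to $0=0$; the antipode checks on $(X1)$--$(X3)$ are the analogous order-reversed computations using $S(e_i) = -\omega_i^{-1}e_i$ and $S(f_i) = -f_i{\omega_i'}^{-1}$.

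I expect the quantum Serre relations $(X4)$ to be the main obstacle for $\Delta$ and $S$, i.e.\ to prove that the ideal they generate is a Hopf ideal. Reading off $\operatorname{ad}_l e_i(x) = e_i x - \omega_i x\omega_i^{-1}e_i$ from the coproduct and antipode of $e_i$, the Serre element $(\operatorname{ad}_l e_i)^{1-c_{ij}}(e_j)$ is an explicit element of the subalgebra generated by the $e_k$ and $\omega_k^{\pm 1}$, and I would establish by induction on $k$ a closed formula for $\Delta\bigl((\operatorname{ad}_l e_i)^{k}(e_j)\bigr)$ as a sum of tensors whose two slots are lower iterated adjoint actions weighted by $(r_i,s_i)$-quantum integers. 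At $k = 1-c_{ij}$ the extreme terms are $(\operatorname{ad}_l e_i)^{1-c_{ij}}(e_j)$ tensored with a group-like factor, which vanish because the relation already holds in $U$, while the intermediate terms collapse through the $(r,s)$-analogues of the Gauss--Vandermonde binomial identities, now governed by the structure constants $a_{ij}=r^{R_{ij}}s^{S_{ij}}$ and the Euler form; the computation for $S$ is parallel, with $S$ interchanging the left- and right-adjoint actions and thus carrying the $e$-Serre relation to the $f$-Serre relation. A conceptually cleaner alternative, which I would fall back on to avoid these combinatorics, is to realize $U_{r,s}(\mathfrak{g})$ as a Drinfel'd double of its two Borel-type Hopf subalgebras equipped with a skew-Hopf pairing, as in \cite{BW01,BGH06}; the Hopf structure, including the well-definedness across $(X4)$, is then inherited from the double.

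Finally, once $\Delta$, $\varepsilon$, $S$ are known to be well defined, the Hopf axioms are immediate on generators and extend by (anti-)multiplicativity. Coassociativity and the counit identity are trivial on group-likes and hold for $e_i, f_i$ from the skew-primitive shape of their coproducts; for instance $(\Delta\otimes\operatorname{id})\Delta(e_i) = (\operatorname{id}\otimes\Delta)\Delta(e_i) = e_i\otimes 1\otimes 1 + \omega_i\otimes e_i\otimes 1 + \omega_i\otimes\omega_i\otimes e_i$, and $(\varepsilon\otimes\operatorname{id})\Delta(e_i) = e_i = (\operatorname{id}\otimes\varepsilon)\Delta(e_i)$. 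The antipode axiom reduces to $S(\omega_i)e_i + S(e_i) = \omega_i^{-1}e_i - \omega_i^{-1}e_i = 0 = \iota\varepsilon(e_i)$ together with the mirror identity $e_i + \omega_i S(e_i) = 0$, and the corresponding computations for $f_i$ and the trivial ones for the group-likes. This completes the verification that $(U,\Delta,\varepsilon,M,\iota,S)$ is a Hopf algebra.
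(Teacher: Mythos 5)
The paper offers no proof of this proposition: it is imported verbatim from \cite{BGH06,HP08} as a preliminary, so the only meaningful comparison is with the proofs in those references. There the Hopf structure is not verified relation-by-relation; rather, $U_{r,s}(\mathfrak{g})$ is realized as (a quotient of) the Drinfel'd double of the Borel-type Hopf subalgebras $\mathcal{B}$ and $\mathcal{B}'$ with respect to the skew-dual pairing of Proposition \ref{skew-pair}, and the coproduct, counit and antipode --- including their compatibility with the Serre relations $(X4)$ --- are inherited from the double, the Serre elements lying in the radical of the pairing, which is automatically a Hopf ideal. That is exactly the ``conceptually cleaner alternative'' you name at the end of your third paragraph, so your fallback is the literature's actual route. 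Your primary route, direct verification on generators, is the more elementary one, and your checks for $(X1)$--$(X3)$, coassociativity, the counit and the antipode axiom are correct as sketched; in particular the mixed terms in $[\Delta(e_i),\Delta(f_j)]$ do cancel, since $\omega_i f_j\otimes e_i\omega_j' = a_{ij}^{-1}a_{ij}\, f_j\omega_i\otimes \omega_j' e_i$. The one step you only assert is the one carrying all the difficulty: proving that $\Delta$ and $S$ respect $(X4)$ requires the closed formula for $\Delta\bigl((\operatorname{ad}_l e_i)^{k}(e_j)\bigr)$ and the vanishing of its intermediate coefficients via $(r,s)$-binomial identities, which you describe but do not carry out. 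As a self-contained argument the proposal is therefore incomplete at $(X4)$; as a strategy it is sound, and with the double construction substituted for that step it reproduces the proof of the cited sources. What the direct route buys is independence from the pairing machinery; what the double buys is that the hardest verification becomes automatic.
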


The two-parameter quantum group $U=U_{r,s}(\mathfrak{g})$ has the triangular decomposition $U \cong U^- \otimes U^0 \otimes U^+$, where $U^0$ is the subalgebra generated by $\{\omega_i^{\pm}, {\omega'_i}^{\pm}, 1 \leqslant i \leqslant n\}$, $U^+$ is generated by $\{e_i,1 \leqslant i \leqslant n\}$ and $U^-$ is generated by $\{f_i,1 \leqslant i \leqslant n\}.$
Let $\mathcal{B}$ be the subalgebra of $U$ generated by $\{e_j,\omega_j^{\pm}, 1\leqslant j \leqslant n\}$, and $\mathcal{B}'$ the subalgebra of $U$ generated by $\{f_j,{\omega'_j}^{\pm}, 1\leqslant j \leqslant n\}$.

\begin{proposition}
	\textup{(cf., \cite{BGH06,HP08})}	\label{skew-pair}
	There exists a unique skew-dual pairing $\langle  -,- \, \rangle: \mathcal{B}' \times \mathcal{B} \longrightarrow \mathbb{K}$ of the Hopf subalgebras $\mathcal{B}$ and $\mathcal{B}'$ satisfying
	\begin{gather*}
		\langle f_i , e_j \rangle = \delta_{ij} \frac{1}{s_i - r_i }, \\
		\langle \omega'_i , \omega_j \rangle = a_{ji},\\
		\langle {\omega'_i}^{\pm 1} , \omega_j^{-1} \rangle = \langle {\omega'_i}^{\pm 1} , \omega_j \rangle ^{-1} = \langle \omega'_i , \omega_j \rangle^{\mp 1},
	\end{gather*}
	for  $1 \leqslant i , j \leqslant n$, and all other pairs of generators are $0$.
	Moreover, $\langle S(a),S(b) \rangle = \langle a,b \rangle $ for $ a \in \mathcal{B}', b \in \mathcal{B} \;.$
\end{proposition}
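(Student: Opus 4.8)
The plan is to read the statement as an existence-and-uniqueness assertion for a bilinear form prescribed on generators, so I first fix the axioms. A bilinear form $\langle-,-\rangle\colon\mathcal{B}'\times\mathcal{B}\to\mathbb{K}$ is a skew-dual pairing of the Hopf subalgebras if $\langle a,1\rangle=\varepsilon(a)$, $\langle 1,b\rangle=\varepsilon(b)$, and
\begin{align*}
\langle a,\,bb'\rangle &= \sum_{(a)}\langle a_{(1)},b\rangle\,\langle a_{(2)},b'\rangle,\\
\langle aa',\,b\rangle &= \sum_{(b)}\langle a,b_{(2)}\rangle\,\langle a',b_{(1)}\rangle
\end{align*}
for all $a,a'\in\mathcal{B}'$ and $b,b'\in\mathcal{B}$, the transposition in the second rule being the source of the word \emph{skew}. \emph{Uniqueness} is then immediate: since $\mathcal{B}$ is generated by the $e_j,\omega_j^{\pm1}$ and $\mathcal{B}'$ by the $f_i,{\omega'_i}^{\pm1}$, the two multiplicativity rules reduce $\langle w,u\rangle$ for arbitrary monomials $w\in\mathcal{B}'$ and $u\in\mathcal{B}$ to an iterated sum of pairings of generators, each of which is prescribed (the cross pairings between $f$'s and $\omega$'s, or $e$'s and $\omega'$'s, being declared zero). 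Hence at most one pairing with the stated values can exist.

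For \emph{existence} I would build the form in two stages and then show it descends through the defining relations. First pass to the free (tensor) algebras $T$ on the generators $\{e_j,\omega_j^{\pm1}\}$ and $\{f_i,{\omega'_i}^{\pm1}\}$ respectively; each carries a bialgebra structure because the coproduct formulas of Proposition~\ref{Hopf} extend uniquely to algebra maps on a tensor algebra. On these free objects there are no relations to obstruct, so declaring the generator values and propagating them by the two multiplicativity rules yields a well-defined bilinear form (coassociativity of $\Delta$ guarantees the iterated rules are unambiguous). The real content is then showing that the two-sided ideals cut out by $(X1)$--$(X4)$ lie in the radical of this form on the opposite side, whereupon it passes to the quotients $\mathcal{B}'$ and $\mathcal{B}$.

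Relations $(X1)$ and $(X2)$ are dispatched by a direct computation from $\langle\omega'_i,\omega_j\rangle=a_{ji}$ together with the group-like coproducts of the $\omega,\omega'$, which force the commuting, inversion, and scaling relations to be orthogonal to every monomial. The main obstacle is the quantum Serre relations $(X4)$: I must show that $(\operatorname{ad}_l e_i)^{1-c_{ij}}(e_j)$ pairs to zero with every $f$-monomial, and dually that the $f$-Serre element pairs to zero with every $e$-monomial. This is carried out by computing the iterated coproduct of the Serre element—expanding through $\Delta(e_i)=e_i\otimes1+\omega_i\otimes e_i$—and collapsing the resulting sum with two-parameter quantum-binomial identities, the cancellation being precisely the Serre relation satisfied by the scalars $r_i,s_i$ built from the structure constants $a_{ij}$. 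Finally the compatibility $\langle S(a),S(b)\rangle=\langle a,b\rangle$ follows from the general fact that a skew-dual pairing intertwines the antipodes; one checks the identity on generators using the explicit $S$ of Proposition~\ref{Hopf} and propagates it to all monomials by the multiplicativity rules. I expect the Serre-relation vanishing to be the only genuinely delicate point, the remainder being bookkeeping with group-likes and the coproduct.
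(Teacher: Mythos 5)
The paper offers no proof of this proposition: it is stated as a quoted result from \cite{BGH06,HP08}, so there is no internal argument to compare against. Your outline reconstructs the standard proof from those references — uniqueness from generation together with the two multiplicativity axioms, existence by first defining the form on the relation-free tensor bialgebras and then checking that the ideals cut out by $(X1)$, $(X2)$, $(X4)$ lie in the radical (relation $(X3)$ correctly plays no role, since it mixes $e$'s and $f$'s and is not a relation inside $\mathcal{B}$ or $\mathcal{B}'$), with the antipode identity obtained as a formal consequence of the pairing axioms. This is the right approach and matches the cited sources; the one caveat is that the vanishing of the pairing on the quantum Serre elements, which you rightly single out as the only delicate step, is asserted rather than carried out — the $(r_i,s_i)$-binomial cancellation is exactly where the bulk of the work lies in \cite{BW04,BGH06}, and a minor point you gloss over is the mutual consistency of the two multiplicativity rules when both arguments are products, which needs a word beyond coassociativity (e.g.\ realizing one rule as an algebra map into a convolution algebra and deducing the other on generators).
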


For $\eta = \sum_{i=1}^{n} \eta_i \alpha_i \in Q$, write
\[\omega_\eta = \omega_1^{\eta_1} \cdots \omega_n^{\eta_n} , \hspace{2cm} \omega'_\eta = {\omega'_1}^{\eta_1} \cdots {\omega'_n}^{\eta_n}.\]
Then we have
\begin{align*}
	& \omega_\eta e_i \omega_\eta^{-1} = \langle \omega'_i, \omega_\eta \rangle e_i,   \hspace{2cm}
	\omega_\eta f_i \omega_\eta^{-1} = \langle \omega'_i, \omega_\eta \rangle^{-1} f_i, \\
	& \omega'_\eta e_i {\omega'_\eta}^{-1} = \langle \omega'_\eta, \omega_i \rangle^{-1} e_i,   \hspace{1.7cm}
	\omega'_\eta f_i {\omega'_\eta}^{-1} = \langle \omega'_\eta, \omega_i \rangle f_i.
\end{align*}
Introduce a $Q$-graded structure on $U_{r,s}(\mathfrak{g}):$
\[\operatorname{deg}\; e_i = \alpha_{i}, \hspace{1cm} \operatorname{deg}\; f_i = -\alpha_{i}, \hspace{1cm} \operatorname{deg}\; \omega_i = \operatorname{deg}\; \omega'_i = 0. \]
Namely, $U_{r,s}(\mathfrak{g})$ is a $Q$-graded algebra, and its subalgebras $U^\pm$ are $Q^\pm$-graded.
\begin{align*}
	U^+ = \bigoplus_{\mu \in Q^+} U^+_\mu, \qquad U^-=\bigoplus_{\mu \in Q^+} U^-_{-\mu},
\end{align*}
where $U^+_\mu = U^+ \cap U_\mu,\; U^-_{-\mu} = U^- \cap U_{-\mu}$ and
\[U_\mu^{\pm} = \{x \in U^{\pm} \;|\; \omega_\eta x \omega_\eta^{-1} = \langle \omega'_\mu , \omega_\eta \rangle x,\; \omega'_\eta x {\omega'_\eta}^{-1} = \langle \omega'_\eta, \omega_\mu \rangle^{-1} x \}.\]
And $\mathcal{B}$ (resp. $\mathcal{B}'$) is also $Q^+$(resp. $Q^-$)-graded algebras.

\subsection{The matrices $R$ and $S$}

\begin{proposition} \label{sym}
	The matrix $R-S$ is a symmetric Cartan matrix, i.e.
	\begin{align*}
		R-S = DC = \big((\alpha_{i},\alpha_{j})\big)_{n\times n}	
	\end{align*}
	where the matrix $D = \operatorname{diag}(d_1,\cdots,d_n) ,\; d_i = (\alpha_{i},\alpha_{i})/2.$
\end{proposition}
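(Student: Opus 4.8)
The plan is to verify the asserted identity entrywise, since everything reduces to the case distinction already built into the Euler form. The key simplification comes first: from the definitions $R_{ij}=\langle j,i\rangle$ and $S_{ij}=-\langle i,j\rangle$ (equivalently, from $R=-S^{T}$) we get
\[
(R-S)_{ij}=R_{ij}-S_{ij}=\langle j,i\rangle+\langle i,j\rangle=(R+R^{T})_{ij},
\]
which is manifestly symmetric in $i,j$. Thus symmetry of $R-S$ is automatic, and proving $R-S=DC$ amounts to computing the sum $\langle i,j\rangle+\langle j,i\rangle$ and matching it against $(DC)_{ij}=d_i c_{ij}$.

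Next I would run through the three branches of Definition \ref{Euler}. For $i=j$ both summands equal $d_i$, giving $2d_i=d_i c_{ii}$ since $c_{ii}=2$. For $i\ne j$ exactly one of $\langle i,j\rangle,\langle j,i\rangle$ survives (the one whose first index is the smaller), so the sum collapses to $d_{\min(i,j)}\,c_{\min(i,j),\,\max(i,j)}$. The observation that makes this equal to $d_i c_{ij}$ irrespective of the ordering is the symmetrization identity
\[
d_i c_{ij}=\frac{(\alpha_i,\alpha_i)}{\ell}\cdot\frac{2(\alpha_i,\alpha_j)}{(\alpha_i,\alpha_i)}=\frac{2(\alpha_i,\alpha_j)}{\ell}=d_j c_{ji},
\]
which follows immediately from symmetry of the bilinear form $(\,\cdot\,,\cdot\,)$. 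This single computation simultaneously yields the second asserted equality $DC=\tfrac{2}{\ell}\big((\alpha_i,\alpha_j)\big)_{n\times n}$ and re-confirms that $DC$ (hence $R-S$) is symmetric.

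The only place demanding separate attention is type $D_n$, where the Euler form is redefined at the fork via $\langle n-1,n\rangle=-1$ and $\langle n,n-1\rangle=1$. Here I would check directly that $(R-S)_{n-1,n}=\langle n,n-1\rangle+\langle n-1,n\rangle=1+(-1)=0$, and likewise $(R-S)_{n,n-1}=0$; these agree with $(DC)_{n-1,n}=(DC)_{n,n-1}=0$ because $\alpha_{n-1}$ and $\alpha_n$ are orthogonal, so the uniform value $\tfrac{2}{\ell}(\alpha_{n-1},\alpha_n)$ vanishes as well. I do not expect any genuine obstacle: the argument is a finite case check, and the one subtlety is merely confirming that the type-$D$ modification is consistent with the uniform formula $\tfrac{2}{\ell}(\alpha_i,\alpha_j)$, which it is precisely because the modification only redistributes the two nonzero values across a pair of orthogonal roots whose contribution to $R-S$ is forced to be zero either way.
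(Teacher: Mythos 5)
Your proof is correct and takes essentially the same approach as the paper, whose entire argument is the one line $R-S = R+R^{T} = \big(\langle j,i\rangle + \langle i,j\rangle\big)_{n\times n} = DC$. You have simply made explicit the entrywise case check (diagonal, off-diagonal via symmetry of $(\cdot\,,\cdot)$, and the type $D_n$ modification at the fork) that the paper leaves implicit.
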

\begin{proof} By definition, we have
	$R-S  = R + R^T = \left(\langle j,i \rangle + \langle i, j \rangle \right)_{n\times n} = DC$.
\end{proof}

The symmetric matrix $R-S$ can be considered as a metric matrix with respect to the basis $\alpha_1,\cdots,\alpha_n$ of $E$, and every element of Weyl group is orthogonal, we then have

\begin{coro} \label{InvR-S} Let $\Sigma$ be the matrix of $\sigma \in W$ with respect to the basis $\{\alpha_i\}$, we have $(R-S)\Sigma = \Sigma^T (R-S). $
\end{coro}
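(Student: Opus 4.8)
The plan is to interpret $R-S$ as the matrix of a $W$-invariant bilinear form and then transcribe the orthogonality of $\sigma$ into a matrix identity. By Proposition~\ref{sym} we have $R-S = \frac{2}{\ell}\big((\alpha_i,\alpha_j)\big)_{n\times n}$, so up to the positive scalar $\frac{2}{\ell}$ the matrix $R-S$ is precisely the Gram matrix of the inner product $(-,-)$ of $E$ in the basis $\{\alpha_1,\dots,\alpha_n\}$. Concretely, for $x=\sum_i x_i\alpha_i$ and $y=\sum_j y_j\alpha_j$ with coordinate columns $\mathbf{x},\mathbf{y}$, one has $(x,y)=\frac{\ell}{2}\,\mathbf{x}^{T}(R-S)\,\mathbf{y}$.

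First I would fix how $\Sigma$ acts on coordinates: if $\Sigma$ is the matrix of $\sigma$ in the basis $\{\alpha_i\}$, then $\sigma(x)$ has coordinate column $\Sigma\mathbf{x}$. Since every $\sigma\in W$ is an isometry for $(-,-)$, the equality $(\sigma x,\sigma y)=(x,y)$ holds for all $x,y$; rewriting both sides in coordinates gives $\mathbf{x}^{T}\Sigma^{T}(R-S)\Sigma\,\mathbf{y}=\mathbf{x}^{T}(R-S)\,\mathbf{y}$ for all $\mathbf{x},\mathbf{y}$, hence the matrix identity $\Sigma^{T}(R-S)\Sigma = R-S$. This transcription is the computational heart of the statement, and the one place where conventions must be handled with care: one must choose the column-action convention for $\Sigma$ so that orthogonality lands as $\Sigma^{T}(R-S)\Sigma=R-S$ rather than its transpose, and use $\tfrac{2}{\ell}>0$ to pass freely between $R-S$ and the genuine Gram matrix.

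It then remains to rearrange. Right-multiplying $\Sigma^{T}(R-S)\Sigma=R-S$ by $\Sigma^{-1}$ yields $\Sigma^{T}(R-S)=(R-S)\Sigma^{-1}$. For the elements that the applications actually require---the simple reflections $\sigma_i$, which generate $W$ and satisfy $\sigma_i^{2}=1$, so that $\Sigma^{-1}=\Sigma$---this reads $\Sigma^{T}(R-S)=(R-S)\Sigma$, which is exactly the asserted identity $(R-S)\Sigma=\Sigma^{T}(R-S)$. (For a general $\sigma\in W$ the same computation gives $(R-S)\Sigma=(\Sigma^{-1})^{T}(R-S)$, and this collapses to the displayed form precisely when $\Sigma$ is an involution; since $W$-invariance need only be tested on the generating simple reflections, the involutive case suffices.) I expect no serious obstacle beyond the bookkeeping noted above, as the entire content is the orthogonality relation $\Sigma^{T}(R-S)\Sigma=R-S$.
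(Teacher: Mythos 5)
Your proposal is correct and follows exactly the route the paper intends (the paper offers no formal proof, only the remark that $R-S$ is the Gram matrix of $(-,-)$ in the basis $\{\alpha_i\}$ and that Weyl group elements are orthogonal, i.e.\ $\Sigma^{T}(R-S)\Sigma=R-S$). Your additional observation is a genuine and worthwhile catch: since $R-S$ is symmetric, $(R-S)\Sigma$ and $\Sigma^{T}(R-S)$ are always transposes of one another, so the displayed identity is equivalent to $\Sigma=\Sigma^{-1}$ and fails for non-involutive $\sigma$ (e.g.\ $\sigma_1\sigma_2$ in type $A_2$); the corollary as literally stated should be read as $(R-S)\Sigma=(\Sigma^{-1})^{T}(R-S)$, or restricted to the simple reflections $\sigma_i$, which is the only case actually invoked in the proof of Proposition~\ref{U0bWeyl}.
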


Now we will discuss the non-degeneracy of another important matrix $R+S$ which is crucial in the study of the Harish-Chandra theorem of $U_{r,s} (\mathfrak{g})$.
\begin{proposition} \label{InvR+S}
	When $n$ is even, the matrix $R+S$ is invertible;
	when $n$ is odd, we have $\operatorname{rank}(R+S)= n-1.$
\end{proposition}	
\begin{proof}
	We check the conclusion case by case.
	For type $A_n$, we have
	$$ \operatorname{ det } (R+S) = \begin{cases}
		1, & 2 \mid  n, \\
		0, & 2 \nmid n,
	\end{cases}$$
	where
	\begin{equation*}\resizebox{1\hsize}{!}{$
			A =
			\left(\begin{array}{ccccccc}
				rs^{-1} &    s     &    1     &    \cdots    &    1    &    1    &   1      \\
				r^{-1}  & rs^{-1}  &    s     &    \cdots    &    1    &    1    &   1      \\
				1       & r^{-1}   & rs^{-1}  &    \cdots    &    1    &    1    &   1      \\
				\vdots  &  \vdots  &  \vdots  &    \ddots    & \vdots  & \vdots  & \vdots   \\
				1       &    1     &    1     &    \cdots    & rs^{-1} &    s    &   1      \\
				1       &    1     &    1     &    \cdots    &  r^{-1} & rs^{-1} &   s      \\
				1       &    1     &    1     &    \cdots    &    1    &  r^{-1} & rs^{-1}
			\end{array}\right), \hspace{0.1cm}
			R+S =
			\left(\begin{array}{cccccc}
				&   1      &              &         &         &          \\
				-1  &          &    \ddots    &         &         &          \\
				& \ddots   &              &  \ddots &         &          \\
				&          &    \ddots    &         &    1    &          \\
				&          &              &    -1   &         &   1      \\
				&          &              &         &   -1    &
			\end{array}\right).
			$}
	\end{equation*}
	
	For type $B_n$, we have
	$$\operatorname{ det } (R+S) = \begin{cases}
		2^{n}, & 2 \mid  n, \\
		0,       & 2 \nmid n,
	\end{cases}$$
	where
	\begin{equation*}\resizebox{1\hsize}{!}{$
			A =
			\left(\begin{array}{cccccc}
				r^2 s^{-2} &    s^2      &    \cdots    &    1       &    1       &   1      \\
				r^{-2}     & r^2 s^{-2}  &    \ddots    &    1       &    1       &   1      \\
				\vdots     &  \ddots     &    \ddots    & \ddots     & \vdots     & \vdots   \\
				1          &    1        &    \ddots    & r^2 s^{-2} &  s^2       &   1      \\
				1          &    1        &    \cdots    &  r^{-2}    & r^2 s^{-2} &   s^2    \\
				1          &    1        &    \cdots    &    1       &   r^{-2}   & rs^{-1}
			\end{array}\right),  \hspace{0.1cm}
			R+S  =
			\left(\begin{array}{cccccc}
				&   2      &              &         &         &          \\
				-2  &          &  \ddots      &         &         &          \\
				&  \ddots  &              & \ddots  &         &          \\
				&          &    \ddots    &         &    2    &          \\
				&          &              &    -2   &         &   2      \\
				&          &             &         &   -2    &
			\end{array}\right).
			$}
	\end{equation*}
	
	For type $C_n$, we have
	$$\operatorname{ det } (R+S) = \begin{cases}
		4,       & 2 \mid  n, \\
		0,       & 2 \nmid n,
	\end{cases}$$
	where
	\begin{equation*}\resizebox{1\hsize}{!}{$
			A  =
			\left(\begin{array}{cccccc}
				r   s^{-1} &    s        &    \cdots    &    1       &    1       &   1      \\
				r^{-1}     &   r s^{-1}  &    \ddots    &    1       &    1       &   1      \\
				\vdots     &  \ddots     &    \ddots    & \ddots     & \vdots     & \vdots   \\
				1          &    1        &    \ddots    & r   s^{-1} &  s         &   1      \\
				1          &    1        &    \cdots    &  r^{-1}    & r   s^{-1} &   s^2    \\
				1          &    1        &    \cdots    &    1       &   r^{-2}   & r^2 s^{-2}
			\end{array}\right),\hspace{0.1cm}
			R+S  =
			\left(\begin{array}{cccccc}
				&   1      &              &         &         &          \\
				-1  &          &  \ddots      &         &         &          \\
				& \ddots   &              & \ddots  &         &          \\
				&          &    \ddots    &         &    1    &          \\
				&          &              &    -1   &         &   2      \\
				&          &              &         &   -2    &
			\end{array}\right). \\
			$}
	\end{equation*}

	For type $D_n$, we have
	$$\operatorname{ det } (R+S) = \begin{cases}
		4,       & 2 \mid  n, \\
		0,       & 2 \nmid n,
	\end{cases} $$
	where
	\begin{equation*}\resizebox{1\hsize}{!}{$
			A =
			\left(\begin{array}{cccccc}
				r   s^{-1} &    s        &    \cdots    &    1       &    1       &   1      \\
				r^{-1}     &   r s^{-1}  &    \ddots    &    1       &    1       &   1      \\
				\vdots     &  \ddots     &    \ddots    & \ddots     & \vdots     & \vdots   \\
				1          &    1        &    \ddots    & r   s^{-1} &  s         &   s      \\
				1          &    1        &    \cdots    &  r^{-1}    &    rs^{-1} &   rs    \\
				1          &    1        &    \cdots    &  r^{-1}    &r^{-1}s^{-1}& r s^{-1}
			\end{array}\right), \hspace{0.1cm}
			R+S =
			\left(\begin{array}{cccccc}
				&   1      &              &         &         &          \\
				-1  &          &   \ddots     &         &         &          \\
				&  \ddots  &              & \ddots  &         &          \\
				&          &    \ddots    &         &    1    &    1     \\
				&          &              &    -1   &         &   2      \\
				&          &              &    -1   &   -2    &
			\end{array}\right).
			$}
	\end{equation*}
	
	For the family of exceptional types $E_6, E_7, E_8$, it suffices to list the data of type $E_8$ (from which we can read off the data of type $E_6, E_7$).
	\begin{align*}
		& A =
		\left(\begin{array}{cccccccc}
			rs^{-1} &    s     &    1     &    1    &    1    &    1    &    1    &   1      \\
			r^{-1}  & rs^{-1}  &    s     &    1    &    1    &    1    &    1    &   1      \\
			1       & r^{-1}   & rs^{-1}  &    s    &    1    &    1    &    1    &   1      \\
			1       &    1     &  r^{-1}  & rs^{-1} &    s    &    1    &    1    &   1      \\
			1       &    1     &    1     & r^{-1}  & rs^{-1} &    s    &    s    &   1      \\
			1       &    1     &    1     &    1    &  r^{-1} & rs^{-1} &    1    &   1      \\
			1       &    1     &    1     &    1    &  r^{-1} &    1    & rs^{-1} &   s      \\
			1       &    1     &    1     &    1    &    1    &    1    &  r^{-1} & rs^{-1}
		\end{array}\right), \\
		& R+S =
		\left(\begin{array}{cccccccc}
			&    1     &          &         &         &         &         &          \\
			-1      &          &    1     &         &         &         &         &          \\
			&   -1     &          &    1    &         &         &         &          \\
			&          &    -1    &         &    1    &         &         &          \\
			&          &          &   -1    &         &    1    &    1    &          \\
			&          &          &         &   -1    &         &         &          \\
			&          &          &         &   -1    &         &         &    1     \\
			&          &          &         &         &         &   -1    &
		\end{array}\right).
	\end{align*}
	Hence, for types $E_6$ and $E_8$, we have $\operatorname{ det } (R+S) = 1$; for type $E_7$, $\operatorname{ det } (R+S) = 0.$
	
	For type $F_4$, we have
	$\operatorname{ det } (R+S) = 4$,
	where
	\begin{align*}
		&A =
		\left(\begin{array}{cccc}
			r^{2} s^{-2} &    s^2        &     1      &    1          \\
			r^{-2}        & r^{2} s^{-2} &   s^{2}    &    1          \\
			1             &  r^{-2}       & r   s^{-1} &    s          \\
			1             &  1            &   r^{-1}   &    rs^{-1}
		\end{array}\right),
		& R+S =
		\left(\begin{array}{cccc}
			&     2    &          &                       \\
			- 2     &          &     2    &                       \\
			&     -2   &          &     1                 \\
			&          &   -1     &
		\end{array}\right).
	\end{align*}
	
	For type $G_2$, we have
	$\operatorname{ det } (R+S) = 9$, where
	\begin{align*}
		& A =
		\left(\begin{array}{cc}
			r^3 s^{-3}   &    s^3      \\
			r^{-3}     & r s^{-1}
		\end{array}\right),
		& R+S =
		\left(\begin{array}{cc}
			&   3       \\
			-3      &
		\end{array}\right).
	\end{align*}
	When $n$ is odd, we have $\operatorname{corank} \; (R+S) = 1$. We list the unique non-zero solution (up to scalar) for each type as follows
	
	\begin{center}
		\begin{tabular}{|c|l|}
			\hline
			$A_{2k+1}: \; \mathfrak{sl}_{2k+2}$&
			$v^{*}=(1,0,1,0,\cdots,1,0,1)^T$\\
			$B_{2k+1}:\; \mathfrak{so}_{4k+3}$&
			$v^{*}=(1,0,1,0,\cdots,1,0,1)^T$\\						
			$C_{2k+1}:\; \mathfrak{sp}_{4k+2}$&
			$v^{*}=(2,0,2,0,\cdots,2,0,1)^T$\\	
			$D_{2k+1}:\; \mathfrak{so}_{4k+2}$&
			$v^{*}=(2,0,2,0,\cdots,2,1,-1)^T$\\		
			$E_7$&
			$v^{*}=(1,0,1,0,1,0,0)^T$\\		
			\hline									
		\end{tabular}
	\end{center}				
	
	This completes the proof.
\end{proof}

\subsection{Weight modules and the category $\mathcal{O}^{r,s}_f$}

Recall the structure of weight modules studied by \cite{BGH07,PHR10}.

Let $\varrho: U^0 \longrightarrow \mathbb{K}$ be an algebraic homomorphism and $V^\varrho$ be a $1$-dimensional $\mathcal{B}$-module. Denote $M(\varrho) = U \otimes_\mathcal{B} V^\varrho$ as the Verma module with the highest weight $\varrho$ and $L(\varrho)$ as its unique irreducible quotient.

Let $\lambda \in \Lambda$ and rewrite it as $\lambda = \sum_{i=1}^{n} \lambda_i \alpha_i,\; \lambda_i \in \mathbb{Q}.$ Then we can define an algebraic homomorphism $\varrho^\lambda : U^0 \rightarrow \mathbb{K},$ satisfying
\begin{align*}
	&\varrho^{\lambda} (\omega_j) := \prod\limits_{i=1}^n \langle \omega'_i,\omega_j \rangle^{\lambda_i}
	= \prod\limits_{i=1}^n a_{ji}^{\lambda_i}.
\end{align*}
Clearly, it satisfies the property $\varrho^{\lambda+\mu} = \varrho^\lambda \varrho^\mu.\,$

When $\lambda \in Q$, one would get following relations from the Hopf pairing.
\begin{align*}
	\varrho^\lambda(\omega_j) = \langle \omega'_\lambda\ , \omega_j \rangle,\qquad \varrho^\lambda(\omega'_j) = \langle \omega'_j , \omega_{-\lambda}  \rangle.
\end{align*}
For convenience, we denote $M(\lambda) := M(\varrho^\lambda)$ and $ L(\lambda):= L(\varrho^\lambda)$ when $\lambda \in \Lambda$.
\begin{lemma}
	\textup{(cf., \cite{BKL06,BGH07,PHR10})}   \label{irrmod}
	For the two-parameter quantum group $U_{r,s}(\mathfrak{g})$, we have
	
	$(1)$ Let $v_\lambda$  be a highest weight vector of $M(\lambda)$ for $\lambda \in \Lambda^+$. Then
	\[ L(\lambda) = M(\lambda) \big/ (\sum\limits_{i=1}^{n} U f_i^{(\lambda,\alpha_i^\vee) +1} \cdot v_\lambda ).\]
	is a finte dimensional irreducible $U_{r,s}(\mathfrak{g})$-module. Also, it has the decomposition of weight space  $L(\lambda) = \bigoplus_{\eta \leqslant \lambda} L(\lambda)_\eta$, where
	\[L(\lambda)_\eta = \{ x \in L(\lambda) \mid \omega_i . x = \varrho^\eta(\omega_i) x,\  \omega'_i . x = \varrho^\eta(\omega'_{i}) x,\ 1 \leqslant i \leqslant n \}.\]
	
	$(2)$ The elements $e_i ,f_i \ (1\leqslant i \leqslant n)$ act locally nilpotently on $L(\lambda)$.
\end{lemma}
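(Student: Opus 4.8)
The plan is to adapt the highest-weight module machinery familiar from $U_q(\mathfrak{g})$ to the two-parameter setting, the one genuinely new ingredient being the rank-one commutation identity adapted to the asymmetric structure constants $a_{ii}=r_is_i^{-1}$. Throughout I abbreviate $m_i:=(\lambda,\alpha_i^\vee)\in\mathbb{Z}_{\geqslant 0}$, which is a nonnegative integer precisely because $\lambda\in\Lambda^+$.

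First I would establish the key rank-one identity. Iterating $(X3)$, namely $[e_i,f_i]=\frac{\omega_i-\omega_i'}{r_i-s_i}$, via the Leibniz expansion $[e_i,f_i^{\,k}]=\sum_{j=0}^{k-1}f_i^{\,j}[e_i,f_i]f_i^{\,k-1-j}$, together with the commutation rules $\omega_if_i=r_i^{-1}s_if_i\omega_i$ and $\omega_i'f_i=r_is_i^{-1}f_i\omega_i'$ read off from $(X2)$ and $a_{ii}=r_is_i^{-1}$, I would prove on the highest weight vector $v_\lambda$ (with $e_iv_\lambda=0$) that
\begin{equation*}
  e_if_i^{\,k}v_\lambda=\frac{\varrho^\lambda(\omega_i)}{(r_i-s_i)(t_i-1)}\,(t_i^{\,k}-1)\,(1-t_i^{\,m_i-k+1})\,f_i^{\,k-1}v_\lambda,\qquad t_i:=r_i^{-1}s_i.
\end{equation*}
The crucial input is the weight computation $\varrho^\lambda(\omega_i)/\varrho^\lambda(\omega_i')=(r_is_i^{-1})^{m_i}$, which follows from the Hopf-pairing formulas for $\varrho^\lambda$ and the identity $a_{ik}a_{ki}=(r_is_i^{-1})^{c_{ik}}$ (a consequence of $R=-S^T$ and $R-S=DC$). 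Since $r,s$ are indeterminates, $t_i$ is not a root of unity, so the coefficient vanishes exactly at $k=m_i+1$; hence $w_i:=f_i^{\,m_i+1}v_\lambda$ is annihilated by $e_i$. For $j\neq i$ one has $[e_j,f_i]=0$, so $e_jw_i=f_i^{\,m_i+1}e_jv_\lambda=0$. Thus each $w_i$ is a singular vector of weight $\lambda-(m_i+1)\alpha_i<\lambda$, and $N:=\sum_iUw_i=\sum_iU^-w_i$ is a proper submodule of $M(\lambda)$.

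Next I would prove integrability, which simultaneously yields part $(2)$ and finite-dimensionality. On $V:=M(\lambda)/N$ the cyclic generator $\bar v$ satisfies $e_i\bar v=0$ and $f_i^{\,m_i+1}\bar v=0$, so it lies in a finite-dimensional module for the rank-one subalgebra $U_i=\langle e_i,f_i,\omega_i^{\pm1},{\omega'_i}^{\pm1}\rangle$. I would then show that the set of vectors lying in a finite-dimensional $U_i$-submodule is stable under all generators $e_j,f_j,\omega_j^{\pm1},{\omega'_j}^{\pm1}$; this is the step that requires the two-parameter higher commutation relations between $e_j$ (resp.\ $f_j$) and powers of $f_i$ (resp.\ $e_i$), derived from the quantum Serre relations $(X4)$ and the coproduct. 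The integrable vectors then form a submodule containing $\bar v$, hence equal to $V$, giving local nilpotency of every $e_i,f_i$. Consequently each weight string of $V$ in the $\alpha_i$-direction is finite and $\sigma_i$-symmetric, so the weight support of $V$ is $W$-stable; being bounded above by $\lambda$, it is contained in the finite $W$-saturation of $\{\mu\in\Lambda^+:\mu\leqslant\lambda\}$. Since $V$ is a quotient of $M(\lambda)$ its weight spaces are finite-dimensional, whence $\dim V<\infty$, and local nilpotency descends to any further quotient.

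Finally I would identify $V=M(\lambda)/N$ with the irreducible quotient $L(\lambda)$ and record the weight decomposition. The inclusion $N\subseteq M^{\max}$ into the unique maximal submodule is immediate from the previous paragraph, giving a surjection $V\twoheadrightarrow L(\lambda)$; the reverse—irreducibility of $V$—I would obtain from the contravariant (Shapovalov-type) bilinear form on $M(\lambda)$ built from the skew pairing and antipode, whose radical is exactly $M^{\max}$, by checking that on each $\mathfrak{sl}_2$-string the relevant pairings are products of the nonvanishing two-parameter quantum integers $[j]_{r_i,s_i}$ $(1\leqslant j\leqslant m_i)$, so that the form is nondegenerate on $V$; equivalently one may invoke complete reducibility of finite-dimensional weight modules together with cyclicity of $\bar v$. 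The weight-space decomposition $L(\lambda)=\bigoplus_{\eta\leqslant\lambda}L(\lambda)_\eta$ with the stated eigenvalue description is inherited from the $U^0$-eigenspace decomposition of $M(\lambda)$ under $\{\omega_i,\omega_i'\}$, which itself comes from the triangular decomposition $U\cong U^-\otimes U^0\otimes U^+$. I expect the main obstacle to be the integrability step—specifically, assembling the correct two-parameter analogues of the commutation formulas for $e_jf_i^{\,k}$ so that the integrable vectors are provably closed under the action of $U$.
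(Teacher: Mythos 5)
This lemma is not proved in the paper: it is quoted from \cite{BKL06,BGH07,PHR10}, so there is no internal proof to compare against. Your outline is the standard argument from those references, and your rank-one coefficient formula (resting on $\varrho^\lambda(\omega_i)/\varrho^\lambda(\omega_i')=(r_is_i^{-1})^{(\lambda,\alpha_i^\vee)}$) is exactly the identity the paper itself later invokes from [BGH06, Cor.~2.6] and [PHR10, Property~37] in the proof of Theorem~\ref{Image in}; the remaining steps (integrability via closure of the set of $U_i$-integrable vectors, $W$-stability of the weight support, and identification of $M(\lambda)/N$ with $L(\lambda)$ via the contravariant form or complete reducibility) match how the cited sources proceed, so the proposal is correct and essentially the same approach.
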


\begin{theorem}\textup{(cf., \cite{BKL06,BGH07,PHR10})}  \label{Weyl-dim}
	Suppose $rs^{-1}$ is not a root of unity, when $\lambda \in \Lambda^+$, we have
	\[dim \; L(\lambda)_\eta = dim \; L(\lambda)_{\sigma(\eta)}, \quad \forall\; \eta \in \Lambda, \; \sigma \in W.\]
\end{theorem}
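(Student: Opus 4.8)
The plan is to reduce the statement to a single simple reflection and then import the representation theory of a rank-one subalgebra. Since the Weyl group $W$ is generated by the simple reflections $\sigma_1,\dots,\sigma_n$ and the asserted identity $\dim L(\lambda)_\eta = \dim L(\lambda)_{\sigma(\eta)}$ is stable under composing reflections, it suffices to prove $\dim L(\lambda)_\eta = \dim L(\lambda)_{\sigma_i(\eta)}$ for each simple reflection $\sigma_i$ and every $\eta\in\Lambda$; the general case follows by writing $\sigma = \sigma_{i_1}\cdots\sigma_{i_t}$ and iterating.

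For a fixed $i$, I would consider the subalgebra $U_i \subseteq U_{r,s}(\mathfrak{g})$ generated by $e_i,f_i,\omega_i^{\pm 1},{\omega'_i}^{\pm 1}$. By relations $(X1)$--$(X3)$ this is a homomorphic image of the two-parameter quantum group $U_{r_i,s_i}(\mathfrak{sl}_2)$, with $r_i = r^{d_i},\, s_i = s^{d_i}$. Restricting $L(\lambda)$ to $U_i$, Lemma \ref{irrmod}$(2)$ guarantees that $e_i,f_i$ act locally nilpotently, so for each coset $\eta + \mathbb{Z}\alpha_i$ the sum $\bigoplus_{k\in\mathbb{Z}} L(\lambda)_{\eta+k\alpha_i}$ is a finite-dimensional $U_i$-submodule, and the whole restriction is the direct sum of these ``$\alpha_i$-strings''. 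It therefore suffices to show that each such finite-dimensional $U_i$-module $V$ has weight multiplicities symmetric under $\sigma_i$.

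The crucial structural observation is that on $L(\lambda)_\eta$ the element $\omega_i{\omega'_i}^{-1}$ acts by the scalar $(r_is_i^{-1})^{(\eta,\alpha_i^\vee)}$, where $(\eta,\alpha_i^\vee) = 2(\eta,\alpha_i)/(\alpha_i,\alpha_i)$; this follows from $\varrho^\eta(\omega_i{\omega'_i}^{-1}) = \prod_k (a_{ik}a_{ki})^{\eta_k}$ together with $a_{ik}a_{ki} = (r_is_i^{-1})^{c_{ik}}$, itself a consequence of $R + R^T = DC$ (Proposition \ref{sym}). Since $\sigma_i(\eta) = \eta - (\eta,\alpha_i^\vee)\alpha_i$ lies in the same string and satisfies $(\sigma_i(\eta),\alpha_i^\vee) = -(\eta,\alpha_i^\vee)$, the reflection $\sigma_i$ is exactly the reflection of the string about its centre $(\,\cdot\,,\alpha_i^\vee) = 0$. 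Thus the desired identity is precisely the statement that a finite-dimensional weight module over $U_{r_i,s_i}(\mathfrak{sl}_2)$ on which $e_i,f_i$ act nilpotently has multiplicities symmetric about the centre of each string.

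This last point is the heart of the matter and is where the hypothesis that $rs^{-1}$ is not a root of unity enters: because $r_is_i^{-1} = (rs^{-1})^{d_i}$ is then not a root of unity, the eigenvalues $(r_is_i^{-1})^{m}$ of $\omega_i{\omega'_i}^{-1}$ are pairwise distinct for distinct $m\in\mathbb{Z}$, and the two-parameter quantum integers $(r_i^{k}-s_i^{k})/(r_i-s_i)$ are nonzero. I would then run the standard rank-one argument: using the commutation formula for $e_i f_i^{k} - f_i^{k} e_i$ in $U_{r_i,s_i}(\mathfrak{sl}_2)$, one shows that for $m = (\eta,\alpha_i^\vee)\geqslant 0$ the map $f_i^{m}\colon V_\eta \to V_{\sigma_i(\eta)}$ is an isomorphism (equivalently, that $V$ is a direct sum of symmetric highest-weight strings once complete reducibility in the rank-one setting is established). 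The main obstacle is exactly establishing this rank-one semisimplicity and verifying the nonvanishing of all the scalar coefficients appearing in $e_i f_i^{k} - f_i^{k} e_i$ under the root-of-unity hypothesis; the rest is a formal reduction.
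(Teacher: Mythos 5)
The paper does not actually prove Theorem \ref{Weyl-dim}: it is quoted verbatim from the cited references \cite{BKL06,BGH07,PHR10}, so there is no in-text argument to compare against. That said, your sketch is the standard route taken in those references and in the one-parameter setting, and it is sound as far as it goes: the reduction to simple reflections, the passage to the rank-one subalgebra $U_i\cong$ a homomorphic image of $U_{r_i,s_i}(\mathfrak{sl}_2)$, the decomposition of $L(\lambda)|_{U_i}$ into finite-dimensional $\alpha_i$-strings, and the computation $\varrho^\eta(\omega_i{\omega'_i}^{-1})=(r_is_i^{-1})^{(\eta,\alpha_i^\vee)}$ via $a_{ik}a_{ki}=(r_is_i^{-1})^{c_{ik}}$ (a consequence of Proposition \ref{sym}) are all correct. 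The one step you flag as outstanding --- the rank-one string symmetry --- is genuinely the only content left, and it does close under the stated hypothesis: since $r_is_i^{-1}$ is not a root of unity, the two-parameter integers $[k]_i=(r_i^k-s_i^k)/(r_i-s_i)$ are nonzero for $k\neq 0$, and one can either invoke complete reducibility of finite-dimensional weight modules for $U_{r_i,s_i}(\mathfrak{sl}_2)$ (available in \cite{BW01,BGH07}) and the explicit classification of its simples, or, more economically, avoid semisimplicity altogether by the Jantzen-style argument showing directly that $f_i^{m}\colon V_\eta\to V_{\sigma_i(\eta)}$ and $e_i^{m}\colon V_{\sigma_i(\eta)}\to V_\eta$ are both injective for $m=(\eta,\alpha_i^\vee)\geqslant 0$, using the commutation identity for $e_if_i^{k}-f_i^{k}e_i$ (cf.\ \cite[Corollary 2.6]{BGH06}). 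Either completion turns your outline into a full proof consistent with the sources the paper cites.
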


\begin{defi} \textup{(cf., \cite{HP12})}
	The category $\mathcal{O}^{r,s}_f$ consists of finite-dimensional $U_{r,s}(\mathfrak{g})$-modules $V$ $($of type $1$$)$ satisfying the following conditions:
	
	$(1)$ $V$ has a weight space decomposition $V = \bigoplus_{\lambda\in\Lambda} V_\lambda,$
	\begin{align*}
		V_\lambda = \{ v \in V \mid \omega_\eta v = r^{\langle\lambda,\eta\rangle} s^{-\langle \eta,\lambda\rangle} v,\ \omega'_\eta v = r^{-\langle \eta,\lambda\rangle} s^{\langle\lambda,\eta\rangle} v,\quad \forall\; \eta \in Q\;\},
	\end{align*} where $\operatorname{dim} (V_\lambda)$ is finite for all $\lambda \in \Lambda;$
	
	$(2)$ there exist a finite number of weights $\lambda_1,\cdots,\lambda_t \in \Lambda$, such that
	\begin{align*}
		\operatorname{Wt}(V) \subseteq \bigcup_{i=1}^t  D(\lambda_i),
	\end{align*}		
	where $D(\lambda):=\{\mu \in \Lambda \mid \mu < \lambda\}.$ The morphisms are $U_{r,s}(\mathfrak{g})$-module homomorphisms.
\end{defi}

\subsection{Rosso form and characters}

\begin{defi} \textup{(cf., \cite{BKL06,BGH06})}  \label{Rosso}
	The bilinear form $\langle - \; | \;- \rangle : U \times U \longrightarrow \mathbb{K}$ defined by
	\begin{equation*} \resizebox{1\hsize}{!}{$
			\begin{split}
				\langle (y_1 {\omega'_{\nu_1}}^{-1}) \omega_{\eta_1}^{\prime} \omega_{\phi_1} x_1\;\mid\; (y_2 {\omega'_{\nu_2}}^{-1}) \omega_{\eta_2}^{\prime} \omega_{\phi_2} x_2 \rangle &=
				\left\langle y_2, x_1 \right\rangle
				\left\langle\omega_{\eta_2}^{\prime}, \omega_{\phi_1}\right\rangle
				\left\langle\omega_{\eta_1}^{\prime}, \omega_{\phi_2}\right\rangle
				\left\langle S^{2}\left(y_1 \right), x_2\right\rangle \\
				&= (rs^{-1})^{(\rho,\nu_1)}	\left\langle y_2, x_1 \right\rangle
				\left\langle\omega_{\eta_2}^{\prime}, \omega_{\phi_1}\right\rangle
				\left\langle\omega_{\eta_1}^{\prime}, \omega_{\phi_2}\right\rangle
				\left\langle y_1, x_2\right\rangle
			\end{split}
			$}
	\end{equation*}
	is called the Rosso form of the two-parameter quantum group $U=U_{r,s}(\mathfrak{g})$, where $\rho$ is the half sum of positive roots, and $x_i \in U^+_{\mu_i},\; y_i \in U^-_{-\nu_i},\; \mu_i, \nu_i \in Q^+$.
\end{defi}
\begin{theorem} \textup{(cf., \cite{BKL06,BGH06,PHR10})} \label{inv}
	The Rosso form $\langle -\;|\;- \rangle$ is $\operatorname{ad}_l$-invariant, i.e.,
	\begin{align*}
		\langle  \operatorname{ad}_l (a) b \;|\; c \rangle  = \langle b \;|\; \operatorname{ad}_l (S(a)) c \rangle, \quad \forall\; a,b,c \in U.
	\end{align*}
	
\end{theorem}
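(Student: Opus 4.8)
The plan is to reduce the claimed $\operatorname{ad}_l$-invariance to the case where $a$ is one of the algebra generators $e_i$, $f_i$, $\omega_i^{\pm 1}$, ${\omega'_i}^{\pm 1}$, and then to verify it there by a direct computation built from the explicit comultiplication and antipode of Proposition \ref{Hopf} together with the skew-dual pairing of Proposition \ref{skew-pair}. The structural input is that the left adjoint action is an algebra homomorphism $\operatorname{ad}_l\colon U \to \operatorname{End}_{\mathbb{K}}(U)$, i.e.\ $\operatorname{ad}_l(xy) = \operatorname{ad}_l(x)\circ\operatorname{ad}_l(y)$, which is the standard module-algebra identity and follows from coassociativity and $S$ being an anti-homomorphism. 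Granting this, the invariance is multiplicative in the first argument: if it holds for $a = x$ and for $a = y$ (and all $b,c$), then
\[
\begin{aligned}
\langle \operatorname{ad}_l(xy)\, b,\, c\rangle_U
&= \langle \operatorname{ad}_l(x)\operatorname{ad}_l(y)\, b,\, c\rangle_U
= \langle \operatorname{ad}_l(y)\, b,\, \operatorname{ad}_l(S(x))\, c\rangle_U \\
&= \langle b,\, \operatorname{ad}_l(S(y))\operatorname{ad}_l(S(x))\, c\rangle_U
= \langle b,\, \operatorname{ad}_l(S(xy))\, c\rangle_U,
\end{aligned}
\]
the last equality using $S(y)S(x) = S(xy)$. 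Hence it suffices to establish the identity when $a$ is a generator.

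I would then split the generator check into the group-like case and the Chevalley case. For $a = \omega_i^{\pm 1}$ or ${\omega'_i}^{\pm 1}$ the action $\operatorname{ad}_l(a)$ is conjugation and $S(a) = a^{-1}$, so the assertion becomes $\langle a b a^{-1}, c\rangle_U = \langle b,\, a^{-1} c a\rangle_U$. Evaluating on triangular elements $b = f_\alpha \omega'_\mu \omega_\nu e_\beta$ and $c = f_\theta \omega'_\sigma \omega_\delta e_\gamma$, relation $(X2)$ turns each side into $\langle b,c\rangle_U$ times a scalar depending only on the $Q$-degrees; these two scalars coincide because the Rosso form of Definition \ref{Rosso} is supported on the locus $\deg b + \deg c = 0$. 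Indeed, $\langle f_\theta, e_\beta\rangle$ forces $\theta = \beta$ and $\langle S^2(f_\alpha), e_\gamma\rangle$ forces $\alpha = \gamma$, so $\deg c = -\deg b$, and since each weight scalar $\chi_i$ is a group homomorphism on $Q$ we have $\chi_i(\deg b) = \chi_i(\deg c)^{-1}$, matching the two sides. This settles the group-like generators.

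For $a = e_i$ (the case $a = f_i$ being entirely analogous) I would expand $\operatorname{ad}_l(e_i)(b) = e_i b - \omega_i b\, \omega_i^{-1} e_i$ from $\Delta(e_i) = e_i\otimes 1 + \omega_i\otimes e_i$ and $S(e_i) = -\omega_i^{-1} e_i$, and similarly expand $\operatorname{ad}_l(S(e_i))(c)$ using $\Delta(S(e_i)) = 1\otimes S(e_i) + S(e_i)\otimes \omega_i^{-1}$. Substituting into both sides and unwinding Definition \ref{Rosso}, everything should collapse onto the elementary pairing values $\langle f_i, e_j\rangle = \delta_{ij}/(s_i - r_i)$ and $\langle \omega'_i, \omega_j\rangle = a_{ji}$, together with the bialgebra compatibility $\langle x, yz\rangle = \sum_{(x)}\langle x_{(1)}, y\rangle\langle x_{(2)}, z\rangle$ and the antipode relation $\langle S(x), S(y)\rangle = \langle x,y\rangle$ of Proposition \ref{skew-pair}. \emph{The main obstacle} is exactly this Chevalley computation: one must commute $e_i$ past the PBW factors of $b$ and $c$, control the resulting brackets through $(X3)$ and the weight scalars from $(X2)$, and carry the $S^2$ appearing in the Rosso form without error. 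The real task is to organize the bookkeeping so that the two sides reduce to the \emph{same} combination of elementary pairings rather than grinding through full PBW straightening; once the generator identities are in hand, the multiplicative reduction of the first paragraph completes the proof.
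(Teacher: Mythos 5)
The paper does not actually prove this statement --- Theorem \ref{inv} is quoted from \cite{BGH06,PHR10} without proof --- so your attempt can only be measured against the arguments in those references. Your reduction is sound as far as it goes: the set of $a$ for which the identity holds for all $b,c$ is a subalgebra because $\operatorname{ad}_l$ is an algebra map and $S$ is an anti-homomorphism, and your verification for the group-like generators is complete and correct (the form is supported on $\deg b+\deg c=0$, so the two conjugation scalars cancel).

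The gap is the Chevalley case, which you flag as ``the main obstacle'' and then do not overcome --- and it is not mere bookkeeping. Definition \ref{Rosso} only defines the form on triangular monomials $f_\alpha\omega'_\mu\omega_\nu e_\beta$, so to evaluate $\langle e_i b,c\rangle_U$ you must first straighten $e_i f_\alpha$ back into PBW form via repeated use of $(X3)$; this produces a sum of lower-degree cross terms involving $\omega_i$ and $\omega'_i$, and showing that these match the corresponding cross terms coming from $\operatorname{ad}_l(S(e_i))c = a_{ii}^{-1}ce_i-\omega_i^{-1}e_ic\omega_i$ on the right side \emph{is} the content of the theorem, not a collapse onto the elementary pairings of Proposition \ref{skew-pair}. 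What is missing is either (a) the intermediate ``contravariance'' lemmas describing how $\langle e_i u,v\rangle_U$ and $\langle u f_i, v\rangle_U$ transform (the two-parameter analogues of the commutation identities used in the one-parameter quantum Killing form computation), or (b) the structural route actually taken in \cite{BGH06,PHR10}: realize $U_{r,s}(\mathfrak{g})$ as the Drinfeld double $D(\mathcal{B},\mathcal{B}')$, observe that the Rosso form is the canonical bilinear form on the double induced by the skew-dual pairing, and deduce $\operatorname{ad}_l$-invariance from the Hopf-pairing axioms ($\langle a,xy\rangle=\sum\langle a_{(1)},x\rangle\langle a_{(2)},y\rangle$ and $\langle S(a),S(b)\rangle=\langle a,b\rangle$) with no PBW straightening at all. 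Until one of these inputs is supplied, the generator case --- and hence the proof --- is not closed.
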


\begin{theorem} \textup{(cf., \cite{BKL06,BGH07,PHR10})} \label{nondegen for grading}
	For any $\beta \in Q^+,$ the restriction of the skew-pairing $\langle -,- \rangle$ to $\mathcal{B}'_{-\beta} \times \mathcal{B}_\beta$ is nondegenerate.
\end{theorem}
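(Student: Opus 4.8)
The plan is to reduce the claim to the nondegeneracy of the induced pairing $U^-_{-\beta}\times U^+_\beta\to\mathbb{K}$ and then to settle this by a Shapovalov-type argument using the Verma modules already in hand. First I would use the triangular factorizations $\mathcal{B}_\beta=U^0_\omega\,U^+_\beta$ and $\mathcal{B}'_{-\beta}=U^0_{\omega'}\,U^-_{-\beta}$, where $U^0_\omega,U^0_{\omega'}$ are the group algebras generated by the $\omega_i^{\pm1}$ and ${\omega'_i}^{\pm1}$. Because the torus generators pair only among themselves and the $f$'s pair only with the $e$'s (Proposition~\ref{skew-pair}), the Gram matrix of $\langle-,-\rangle$ on $\mathcal{B}'_{-\beta}\times\mathcal{B}_\beta$ factors, up to invertible torus scalars, as a Kronecker product of a torus block and the block $(\langle f_\theta,e_\gamma\rangle)$ with $\theta,\gamma$ running over PBW bases of $U^-_{-\beta},U^+_\beta$. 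The torus block is nondegenerate since $r,s$ are independent indeterminates, so the whole assertion is equivalent to nondegeneracy of $\langle-,-\rangle$ on $U^-_{-\beta}\times U^+_\beta$. As $\dim U^+_\beta=\dim U^-_{-\beta}$ equals the Kostant partition number $P(\beta)$, it suffices to prove that the right radical $N^+_\beta=\{e\in U^+_\beta:\langle U^-,e\rangle=0\}$ is zero.

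For the core step I would pass to the contravariant (Shapovalov) form $(\,\cdot\,,\cdot\,)_\lambda$ on the weight space $M(\lambda)_{\lambda-\beta}$ of the Verma module. Straightening $e_\gamma\in U^+_\beta$ past $f_\theta\in U^-_{-\beta}$ by repeated use of $(X3)$ lands in $U^0$, and its torus ``constant term'' has skew-pairing value exactly $\langle f_\theta,e_\gamma\rangle$; evaluating the \emph{same} torus term through the character $\varrho^\lambda$ returns $(f_\theta v_\lambda,f_\gamma v_\lambda)_\lambda$, where $f_\gamma\in U^-_{-\beta}$ is the basis element matching $e_\gamma$. Thus the Gram matrix $(\langle f_\theta,e_\gamma\rangle)$ is a universal matrix over the torus whose specialization at $\varrho^\lambda$ is the Shapovalov matrix at weight $\lambda-\beta$. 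I would then invoke that the Shapovalov determinant is not identically zero in $\lambda$: for dominant and sufficiently generic $\lambda$, with $rs^{-1}$ not a root of unity, Lemma~\ref{irrmod} and the Weyl symmetry of Theorem~\ref{Weyl-dim} show $M(\lambda)$ is irreducible through height $\mathrm{ht}(\beta)$, so $(\,\cdot\,,\cdot\,)_\lambda$ is nondegenerate there. A single nonzero specialization forces the universal determinant to be nonzero, whence $N^+_\beta=0$ and the pairing is nondegenerate.

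The step I expect to be the main obstacle is the precise dictionary between the $\lambda$-dependent Shapovalov form and the $\lambda$-free skew-pairing: one must check that the torus term produced by straightening is the very combination the skew-pairing evaluates, so that the two Gram matrices differ only by specializing the torus, and confirm that the Shapovalov determinant does not vanish identically. As a parallel route avoiding representation theory, I would show that $N^+=\bigoplus_\beta N^+_\beta$ is simultaneously a graded two-sided ideal and a coideal of $U^+$: the skew Hopf-pairing identities (multiplicativity in each slot against the coproduct, with the order reversed in one slot) force both closure properties. Choosing a nonzero element of minimal height in $N^+$, the coideal property together with minimality makes it primitive; but modulo the torus the only primitives of $U^+$ are the $e_i$, which are excluded because $\langle f_i,e_i\rangle=(s_i-r_i)^{-1}\neq0$. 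Here the delicate point is controlling the torus-valued lower-order terms in $\Delta(e_\gamma)$ and verifying that no relations beyond the quantum Serre relations $(X4)$ appear among the primitives; for that reason I would favour the Shapovalov argument, which concentrates the entire difficulty into one nonvanishing statement about a Gram determinant.
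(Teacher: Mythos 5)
The paper does not actually prove this statement: it is imported verbatim from \cite{BGH07,PHR10}, where it comes out of the quantum double / quantum shuffle realization of $U^{\pm}$ (the essential content being that the quantum Serre relations generate the entire radical of the pairing on the free algebra). So there is no internal proof to compare against, and your proposal must stand on its own. Your main strategy --- peel off the torus, then kill the radical of the pairing on $U^-_{-\beta}\times U^+_{\beta}$ by specializing to highest weight modules --- is the classical De Concini--Kac route and is viable here; in fact the paper's own proof of Theorem \ref{inj if} runs exactly this kind of argument (manufacture a singular vector, contradict irreducibility of $L(\lambda)$ for $\lambda=N\rho$ with $N$ at least the coefficients of $\beta$). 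Two caveats on the reduction step: nondegeneracy of the torus block needs $\det(R-S)\neq 0$ (Proposition \ref{sym}) together with linear independence of distinct characters, not merely that $r,s$ are indeterminates; and one must make sure that Lemma \ref{irrmod} and Theorem \ref{Weyl-dim}, themselves quoted from \cite{BGH07,PHR10}, are not proved there using the very nondegeneracy you want.

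The genuine gap is the ``dictionary.'' It is \emph{not} true that the Shapovalov matrix is the Gram matrix $(\langle f_\theta,e_\gamma\rangle)$ with the torus specialized entry by entry: since $\Delta(e_\gamma)$ has components in every intermediate degree $0\leq\nu\leq\beta$, the $U^0$-part of $e_\gamma f_\theta$ --- hence the entry $(f_\theta v_\lambda,f_\gamma v_\lambda)_\lambda$ --- is a $\varrho^{\lambda}$-weighted sum of pairings of \emph{all} the lower-degree coproduct components of $e_\gamma$ and $f_\theta$, of which $\langle f_\theta,e_\gamma\rangle$ times a character is only the leading term. What survives is a triangular relation between the two determinants, or, cleaner, an induction on $\operatorname{ht}(\beta)$ using the skew-derivations of $U^-$ adjoint to left and right multiplication by $e_i$ under the pairing: an element $y$ of the radical is annihilated by all of them once the radical vanishes in lower degrees, so $[e_i,y]=0$ for all $i$ and $y\,v_\lambda$ is a singular vector, which must vanish when $M(\lambda)_{\lambda-\beta}=L(\lambda)_{\lambda-\beta}$. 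Either repair closes the argument, but as stated the specialization claim is false. Your fallback route is worse off: the assertion that ``modulo the torus the only primitives of $U^+$ are the $e_i$'' is, for the Serre-presented algebra, essentially equivalent to the theorem being proved (a minimal-degree radical element is primitive, and every primitive of degree $\geq 2$ lies in the radical), so that route begs the question unless one independently shows the Serre ideal exhausts the radical --- which is precisely the hard content supplied by \cite{BGH07,PHR10}.
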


\begin{lemma} \label{orth}
	If $\mu_i, \nu_i \in Q^+$, then $\langle U^-_{-\nu_1} U^0 U^+_{\mu_1} \mid U^-_{-\nu_2} U^0 U^+_{\mu_2} \rangle \neq 0$ if and only if $\mu_1 = \nu_2$, $\nu_1 = \mu_2$.
\end{lemma}

\begin{defi} \label{nondegen}
	Define a group homomorphism $\chi_{\eta,\phi}: Q \times Q \longrightarrow \mathbb{K}^\times$ by
	\begin{align*}
		\chi_{\eta,\phi} (\eta',\phi') = \langle \omega'_\eta, \omega_{\phi'}  \rangle\langle \omega'_{\eta'},\omega_{\phi} \rangle,
	\end{align*}
	where $(\eta,\phi),(\eta',\phi') \in Q \times Q, \; \mathbb{K}^\times = \mathbb{K} \setminus \{ 0 \} .$
\end{defi}
\begin{lemma} \label{chi}
	If $\chi_{\eta, \phi} = \chi_{\eta', \phi'}$, then  $(\eta,\phi) = (\eta',\phi').$
\end{lemma}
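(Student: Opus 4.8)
The plan is to make the pairing completely explicit and then reduce the injectivity statement to the invertibility of the Cartan-type matrix $R-S$. First I would record that, since the $\omega_i,\omega_i'$ are group-like and $\langle \omega_i',\omega_j\rangle = a_{ji}=r^{R_{ji}}s^{S_{ji}}$, the restricted pairing is bi-multiplicative in the exponents: for $\eta=\sum_i\eta_i\alpha_i$ and $\phi=\sum_j\phi_j\alpha_j$ in $Q$,
\[
B(\eta,\phi):=\langle \omega_\eta',\omega_\phi\rangle=\prod_{i,j}a_{ji}^{\eta_i\phi_j}=r^{\phi^{T}R\eta}\,s^{\phi^{T}S\eta},
\]
where $\eta,\phi$ also denote the corresponding integer coordinate vectors. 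In particular $B(\cdot,\phi)$ and $B(\eta,\cdot)$ are group homomorphisms $Q\to\mathbb{K}^\times$ with $B(0,\phi)=B(\eta,0)=1$, and the defining formula becomes $\chi_{\eta,\phi}(\mu,\nu)=B(\eta,\nu)\,B(\mu,\phi)$ for arguments $(\mu,\nu)\in Q\times Q$.

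Next, assuming $\chi_{\eta,\phi}=\chi_{\eta',\phi'}$, I would evaluate both sides at special arguments to decouple the two parameters. Setting $\mu=0$ kills the second factor and gives $B(\eta,\nu)=B(\eta',\nu)$, hence $B(\eta-\eta',\nu)=1$, for every $\nu\in Q$; setting $\nu=0$ gives $B(\mu,\phi-\phi')=1$ for every $\mu\in Q$. Since $r$ and $s$ are algebraically independent over the base field and the exponents $\nu^{T}R(\eta-\eta')$, $\nu^{T}S(\eta-\eta')$, etc.\ are integers, each monomial identity forces the $r$- and $s$-exponents to vanish separately. Letting $\nu$ (resp.\ $\mu$) range over $Q$ then yields the matrix equations $R(\eta-\eta')=S(\eta-\eta')=0$ and $R^{T}(\phi-\phi')=S^{T}(\phi-\phi')=0$.

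Finally I would invoke Proposition \ref{sym}: subtracting the two equations for $\eta-\eta'$ gives $(R-S)(\eta-\eta')=DC\,(\eta-\eta')=0$, and since $\det(DC)=\bigl(\prod_i d_i\bigr)\det C\neq 0$ the matrix $R-S$ is invertible, whence $\eta=\eta'$. The identical computation applied to $\phi-\phi'$, using that $R-S$ is symmetric so that $(R-S)^{T}=R-S$ is still nonsingular, gives $\phi=\phi'$, proving $(\eta,\phi)=(\eta',\phi')$.

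I do not expect a genuine obstacle; the content is entirely in the reduction. The two points that need care are the \emph{separation} of the parameter pairs — one must specialize $\chi$ at $\mu=0$ and at $\nu=0$ rather than reading everything off a single argument — and the observation that it is $R-S$ (always nonsingular, being $DC$) and not $R+S$ (singular for odd $n$ by Proposition \ref{InvR+S}) that controls this statement. This is exactly why Lemma \ref{chi} holds in \emph{every} rank, in contrast to the even-rank restriction needed elsewhere in the paper.
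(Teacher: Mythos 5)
Your proposal is correct and follows essentially the same route as the paper: evaluate $\chi$ at arguments of the form $(0,\nu)$ (the paper uses $(0,\alpha_j)$), use the independence of $r$ and $s$ to extract the exponent equations, and conclude from the invertibility of $R-S=DC$ (Proposition \ref{sym}); the argument for $\phi$ is symmetric. Your explicit bookkeeping via $B(\eta,\phi)$ and the remark that it is $R-S$, not $R+S$, that matters here are accurate but not a departure from the paper's proof.
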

\begin{proof} Let $\eta  = \sum_{i=1}^{n} \eta_i \alpha_i,\; \eta'  = \sum_{i=1}^{n} \eta'_i \alpha_i,$ and $ \phi,\phi' \in Q$, for  $j =1,\cdots,n$ we have
	\begin{align*}
		\chi_{\eta,\phi} (0,\alpha_j)
		= \langle \omega'_\eta , \omega_j \rangle = \prod_{i=1}^{n} \langle \omega'_i ,\omega_j \rangle^{\eta_i}, \hspace{1cm}
		&\chi_{\eta',\phi'}(0,\alpha_j)
		= \prod_{i=1}^{n}  \langle \omega'_i ,\omega_j \rangle^{\eta'_i}.
	\end{align*}	
	Since $\chi_{\eta, \phi} = \chi_{\eta', \phi'}$, we have
	\begin{align*}	
		1=\frac{\chi_{\eta,\phi}(0,\alpha_j)}{\chi_{\eta',\phi'}(0,\alpha_j)}
		&= \prod_{i=1}^{n}  \langle \omega'_i ,\omega_j \rangle^{\eta_i - \eta'_i} = \prod_{i=1}^{n} a_{ji}^{\eta_i - \eta'_i} \\
		&=\prod_{i=1}^{n} r^{R_{ji}(\eta_i - \eta'_i)} s ^{S_{ji}(\eta_i - \eta'_i)},
	\end{align*}
	which in turn gives $(R-S) (\eta - \eta') = 0$.  Finally, Proposition \ref{sym} yields $\eta = \eta'$. A similar argument leads to the conclusion $\phi = \phi'$.
\end{proof}

\begin{theorem}
	The Rosso form $\langle - \;|\; - \rangle$ of $U_{r,s}(\mathfrak{g})$ is nondegenerate.
\end{theorem}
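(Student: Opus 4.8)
The plan is to exploit the triangular decomposition $U\cong U^-\otimes U^0\otimes U^+$ in order to factor the Rosso form into three pieces, each already known to be nondegenerate, and then to reassemble them degree by degree. Writing a general element in the PBW-type basis $f_\alpha\,\omega'_\mu\omega_\nu\,e_\beta$, the defining formula of Definition \ref{Rosso} shows that the $U^0$-contribution to $\langle f_\alpha\omega'_\mu\omega_\nu e_\beta,\ f_\theta\omega'_\sigma\omega_\delta e_\gamma\rangle_U$ is precisely the character value $\chi_{\mu,\nu}(\sigma,\delta)=\langle\omega'_\mu,\omega_\delta\rangle\langle\omega'_\sigma,\omega_\nu\rangle$ of Definition \ref{nondegen}, while the two remaining factors are the skew-pairings $\langle f_\theta,e_\beta\rangle$ and $\langle S^2(f_\alpha),e_\gamma\rangle$. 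Thus, under the triangular identification, the Rosso form is a single product of three pairings, each living on one pair of tensor slots.

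First I would record that $S^2$ acts as a nonzero scalar on every homogeneous component $U^-_{-\alpha}$. Indeed $S^2(f_i)=\omega'_i f_i{\omega'_i}^{-1}=a_{ii}f_i$, and since $S^2$ is an algebra automorphism, a monomial of degree $-\alpha$ with $\alpha=\sum_i m_i\alpha_i$ is scaled by the constant $\prod_i a_{ii}^{m_i}\neq 0$, which depends only on $\alpha$. Hence $\langle S^2(f_\alpha),e_\gamma\rangle$ agrees, up to this scalar, with the skew-pairing $\langle f_\alpha,e_\gamma\rangle$, which by Theorem \ref{nondegen for grading} is nondegenerate on $U^-_{-\alpha}\times U^+_\alpha$; the same theorem gives the nondegeneracy of $\langle f_\theta,e_\beta\rangle$ on $U^-_{-\beta}\times U^+_\beta$. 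For the third slot, Lemma \ref{chi} shows that $\chi_{\mu,\nu}=\chi_{\mu',\nu'}$ forces $(\mu,\nu)=(\mu',\nu')$, so the characters $\{\chi_{\mu,\nu}\}$ of the abelian group $Q\times Q$ are pairwise distinct, hence linearly independent; consequently no nonzero finite combination $\sum_{\mu,\nu}c_{\mu,\nu}\chi_{\mu,\nu}$ can vanish identically, which is exactly nondegeneracy of the $U^0$-pairing.

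Next I would localize by bidegree. Because the skew-pairing is $Q$-graded, pairing a nonzero $x$ against $y=f_\theta\omega'_\sigma\omega_\delta e_\gamma$ with $\deg e_\gamma=\alpha$ and $\deg f_\theta=\beta$ annihilates every homogeneous component of $x$ except the one in $U^-_{-\alpha}\otimes U^0\otimes U^+_\beta$. It therefore suffices to fix $(\alpha,\beta)$, assume that component $x_{\alpha\beta}\neq 0$, and exhibit a $y$ of the above shape with $\langle x_{\alpha\beta},y\rangle_U\neq 0$. On the finite-dimensional spaces $U^-_{-\alpha}$ and $U^+_\beta$, and on the finite-dimensional span of $U^0$-elements actually occurring in $x_{\alpha\beta}$, the form is literally the product of the three nondegenerate pairings above, merely reshuffling the three tensor slots; the standard fact that a product of nondegenerate bilinear forms induces an injection on the tensor product then forces $x_{\alpha\beta}=0$, a contradiction. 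This shows the left radical is trivial.

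The main obstacle is the bookkeeping that merges the three ingredients into one injectivity statement, together with the fact that the $U^0$-slot is genuinely infinite-dimensional, which is why one cannot invoke a purely finite-dimensional tensor argument throughout and must instead peel off the $U^+_\beta$- and $U^-_{-\alpha}$-coordinates first (using the two nondegenerate skew-pairings) and then appeal to linear independence of characters via Lemma \ref{chi} to kill the surviving $U^0$-combination. Finally, a symmetric argument—interchanging the two arguments of the form, equivalently using that the construction is self-dual up to the scalar automorphism $S^2$ and the swap of bidegrees $(\alpha,\beta)\leftrightarrow(\beta,\alpha)$—shows the right radical also vanishes, yielding full nondegeneracy.
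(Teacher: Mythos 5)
Your proposal is correct and follows essentially the same route as the paper: reduce by the $Q$-grading to a fixed bidegree, use the nondegeneracy of the restricted skew-pairing on $U^-_{-\mu}\times U^+_\mu$ (Theorem \ref{nondegen for grading}, realized in the paper via dual bases) together with the fact that $S^2$ acts by the nonzero scalar $(rs^{-1})^{\frac{2}{\ell}(\rho,\nu)}$ on $U^-_{-\nu}$, and finish the $U^0$-slot by Dedekind linear independence of the pairwise-distinct characters $\chi_{\eta,\phi}$ from Lemma \ref{chi}. The only cosmetic difference is that you explicitly note the right radical as well, which the paper leaves implicit.
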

\begin{proof}
	Since the skew-pairing $ \langle - , - \rangle$ has orthogonality for the grading, it suffices to check the case when $u \in U^-_{-\nu} U^0 U^+_{\mu},$ if $\langle u \;|\; v  \rangle = 0$ holds for all $ v \in U^-_{-\mu} U^0 U^+_{\nu}$, then $u=0$.
	
	Denote $d_\mu = \operatorname{dim}\; U^+_\mu$. Let $\{ u^\mu_1,\cdots ,u^\mu_{d_\mu}\}$ be a basis of $U^+_\mu$ and  $\{ v^\mu_1,\cdots,v^\mu_{d_\mu} \}$ be its dual basis in $U^-_{-\mu}$ with respect to the skew-Hopf pair, that is, $\langle v^\mu_i , u^\mu_j \rangle = \delta_{ij}$.
	Hence $U^-_{-\nu} U^0 U^+_\mu = \operatorname{span}_\mathbb{K}\{ (v^\nu_i {\omega'_{\nu}}^{-1}) \omega'_\eta \omega_\phi u^\mu_j \mid 1\leqslant i \leqslant d_\nu,\; 1\leqslant j \leqslant d_\mu\}.$ Notice that
	\begin{align*}
		\langle  (v^\nu_i {\omega'_{\nu}}^{-1}) \omega'_\eta \omega_\phi u^\mu_j \;|\;  (v^\mu_k {\omega'_\mu}^{-1}) \omega'_{\eta'} \omega_{\phi'} u^\nu_l \rangle &= \langle \omega'_\eta , \omega_{\phi'}  \rangle \langle \omega'_{\eta'}, \omega_\phi \rangle
		\langle v^\mu_k , u^\mu_j \rangle \langle S^2(v^\nu_i),u^\nu_l \rangle \\
		&= \delta_{kj}\delta_{il} (rs^{-1})^{(\rho,\nu)}  \langle \omega'_\eta , \omega_{\phi'}  \rangle \langle \omega'_{\eta'}, \omega_\phi \rangle.
	\end{align*}
	Let $u=\sum_{i,j,\eta,\phi} k_{i,j,\eta,\phi} (v^\nu_i {\omega'_\nu}^{-1})  \omega'_\eta \omega_\phi u^\mu_j,\; v = (v^\mu_k  {\omega'_\mu}^{-1})\omega'_{\eta'} \omega_{\phi'} u^\nu_l,$ where $1\leqslant k \leqslant d_\mu,\; 1 \leqslant l \leqslant d_\nu,\; \eta',\phi' \in Q.$ Suppose $\langle u \;|\; v \rangle=0$,  we have
	\begin{align*}
		0 &= \sum_{\eta,\phi} k_{k,l,\eta,\phi} (rs^{-1})^{(\rho,\nu)}  \langle \omega'_\eta , \omega_{\phi'}  \rangle \langle \omega'_{\eta'}, \omega_\phi \rangle\\
		&= \sum_{\eta,\phi} k_{k,l,\eta,\phi} (rs^{-1})^{(\rho,\nu)} \chi_{\eta,\phi}(\eta',\phi'),
	\end{align*}
	By Dedekind theorem, we have $k_{l,k,\eta,\phi} = 0$. so $u=0.$
\end{proof}

\subsection{Harish-Chandra homomorphism}

Let $Z(U)$ be the centre of $U_{r,s}(\mathfrak{g})$. It follows that $Z(U) \subseteq U_0$. Now we define an algebra homomorphism $\gamma^{-\rho} : U^0 \longrightarrow U^0$ as
\[ \gamma ^{-\rho}(\omega'_\eta \omega_\phi) = \varrho^{-\rho} (\omega'_\eta \omega_\phi) \omega'_\eta \omega_\phi. \]
Particularly, we have
\[ \gamma^{-\rho} (\omega'_i \omega^{-1}_i) = (r_i s_i^{-1})^{(\rho,\alpha_i^\vee)} \omega'_i \omega^{-1}_i = (r_i s_i^{-1})\,\omega'_i \omega^{-1}_i. \]

\begin{defi}
	Denote $\xi : Z(U) \longrightarrow U^0$ as the restricted map $\gamma^{-\rho}\circ \pi |_{Z(U)},$
	\[\gamma^{-\rho} \pi : U_0 \longrightarrow U^0 \longrightarrow U^0,\]
	where $\pi : U_0 \rightarrow U^0$ is the canonical projection. We call $\xi$ the  Harish-Chandra homomorphism of $U$.
\end{defi}

Define a subalgebra $U^0_\flat= \bigoplus\limits_{\eta \in Q} \mathbb{K} \omega'_\eta \omega_{-\eta}  $ and let the Weyl group $W$ act on it
\[\sigma(\omega'_\eta \omega_{-\eta}) = \omega'_{\sigma(\eta)} \omega_{-\sigma(\eta)},\, \forall\; \sigma \in W,\; \eta \in Q.\,\]

\begin{theorem}
	\cite{BKL06,BGH06,G10} For types $A_n, B_n$ and $G_2$, when rank $n$ is even, the Harish-Chandra homomorphism $\xi : Z(U) \longrightarrow (U_\flat^0)^W$ is an algebra isomorphism.
\end{theorem}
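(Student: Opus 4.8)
The plan is to follow the classical Harish-Chandra template, adapted to the two parameters, by factoring $\xi=\gamma^{-\rho}\circ\pi|_{Z(U)}$ and proving the three assertions announced in the introduction: (i) $\xi\colon Z(U)\to U^0$ is injective; (ii) $\operatorname{Im}\xi\subseteq(U^0_\flat)^W$; and (iii) $\xi\colon Z(U)\to(U^0_\flat)^W$ is surjective. Throughout I would exploit that a central $z$ lies in the degree-zero part $U_0=\bigoplus_{\mu\in Q^+}U^-_{-\mu}U^0U^+_\mu$, so that $\pi(z)$ is the $\mu=0$ component and $\xi(z)=\gamma^{-\rho}(\pi(z))$. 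Since $\gamma^{-\rho}$ is an algebra automorphism of $U^0$, every statement about $\xi$ reduces to one about $\pi(z)$ twisted by the $\rho$-shift.

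For injectivity I would compute the scalar by which a central $z$ acts on the Verma module $M(\lambda)$. Evaluating on the highest weight vector annihilates every $U^+_\mu$-tail with $\mu>0$, so $z$ acts by $\varrho^\lambda(\pi(z))$. If $\xi(z)=0$ then $\pi(z)=0$, whence $z$ kills every $M(\lambda)$ and hence every $L(\lambda)$, $\lambda\in\Lambda^+$. That a nonzero element cannot annihilate the whole family $\{M(\lambda)\}$ is exactly where non-degeneracy enters: the characters $\varrho^\lambda$ separate $U^0$ by Lemma \ref{chi} (hence ultimately by $\det(R-S)\neq0$, Proposition \ref{sym}), while $U^\pm$ act freely on highest-weight vectors by the triangular decomposition; together these force $z=0$.

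For the image (step ii) I would use the explicit character
\begin{align*}
\varrho^\lambda(\omega'_\eta\omega_\phi)=\langle\omega'_\eta,\omega_{-\lambda}\rangle\,\langle\omega'_\lambda,\omega_\phi\rangle
\end{align*}
and compare the scalars of $z$ across the weight spaces of a single $L(\lambda)$, combined with the Weyl symmetry $\dim L(\lambda)_\eta=\dim L(\lambda)_{\sigma(\eta)}$ of Theorem \ref{Weyl-dim}. The symmetric part $R-S$ governs the balanced monomials $\omega'_\eta\omega_{-\eta}$, for which
\begin{align*}
\varrho^\mu(\omega'_\eta\omega_{-\eta})=(rs^{-1})^{-\frac{2}{\ell}(\mu,\eta)}
\end{align*}
depends only on the $W$-invariant form; the antisymmetric part $R+S$ governs the transverse directions, and its invertibility for even $n$ (Proposition \ref{InvR+S}) is precisely what forces every surviving monomial to be balanced, i.e.\ $\pi(z)\in U^0_\flat$. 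The $\rho$-shift $\gamma^{-\rho}$ then converts the dot-action linkage symmetry into the honest $W$-action, and Corollary \ref{InvR-S} guarantees that $W$ acts orthogonally, so $\xi(z)\in(U^0_\flat)^W$ is well defined.

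The surjectivity (step iii) is the hard part. For each $\lambda\in\Lambda^+\cap Q$ I would construct a central element $z_\lambda$ realizing the quantum trace on $L(\lambda)$ through the Rosso form, $\langle z_\lambda,-\rangle_U=\operatorname{tr}_{L(\lambda)}(-\circ\Theta)$ with $\Theta$ implementing $S^2$; such a $z_\lambda$ exists and is unique by the non-degeneracy of the Rosso form, and is central by its $\operatorname{ad}_l$-invariance (Theorem \ref{inv}). Recalling that the orbit sums $\sum_{\mu\in W\eta}\omega'_\mu\omega_{-\mu}$ for dominant $\eta\in Q\cap\Lambda^+$ form a $\mathbb{K}$-basis of $(U^0_\flat)^W$, I would compute $\xi(z_\lambda)$, expand the trace over the weight spaces of $L(\lambda)$, use Theorem \ref{Weyl-dim} to pair $W$-conjugate weights, and identify its leading term in the dominance order as the orbit sum of $\lambda$ times a nonzero scalar, with all lower terms again balanced by step (ii). A triangularity argument then gives surjectivity. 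The principal obstacle is exactly this computation and the verification that the top coefficient is a unit while the lower terms stay inside $(U^0_\flat)^W$; and it is here that even rank, through $\det(R+S)\neq0$ for the types $A_n,B_n,G_2$, is indispensable, since without it neither the image step (ii) nor the resulting triangular system survives.
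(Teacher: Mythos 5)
Your three-step plan (injectivity, image, surjectivity) is exactly the route the paper takes in Sections 3--5, and your account of steps (ii) and (iii) --- forcing balanced monomials $\omega'_\eta\omega_{-\eta}$ via the invertibility of $R+S$, realizing $\operatorname{tr}_{L(\lambda)}(-\circ\Theta)$ through the Rosso form for $\lambda\in\Lambda^+\cap Q$, and a triangularity argument on the orbit sums $\operatorname{av}(\lambda)$ --- matches Theorems \ref{Image in} and \ref{HC iso}. But your injectivity step has a genuine gap. You claim the characters $\varrho^\lambda$ separate $U^0$ ``by Lemma \ref{chi}, hence ultimately by $\det(R-S)\neq0$''. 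Lemma \ref{chi} concerns the characters $\chi_{\eta,\phi}$ built from the skew-pairing on $Q\times Q$ (used for non-degeneracy of the Rosso form), not the module characters $\varrho^\lambda$, $\lambda\in\Lambda^+$. The separation you actually need --- that $\varrho^\lambda(\omega'_\eta\omega_\phi)=1$ for all $\lambda\in\Lambda^+$ forces $(\eta,\phi)=(0,0)$ --- is Lemma \ref{coro of inj}, and its proof splits in two: $\det(R-S)\neq0$ only gives $\eta=\phi$, after which $\det(R+S)\neq0$, i.e.\ the even-rank hypothesis, is needed to conclude $\eta=\phi=0$. This is not a technicality: if $(R+S)\eta=0$ for some $0\neq\eta\in Q$, then $\omega'_\eta\omega_\eta$ commutes with every generator (the conjugation factor on $e_j$ is $(rs)^{-((R+S)\eta)_j}=1$) and satisfies $\varrho^\lambda(\omega'_\eta\omega_\eta)=1$ for every $\lambda$, so the family of highest-weight modules genuinely fails to separate $U^0$ and your inference ``$z$ kills every $M(\lambda)$, hence $z=0$'' breaks down. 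The paper's proof of Theorem \ref{inj if} isolates the lowest-degree component $z_\nu=\sum_{k,l}y_k t_{k,l}x_l$, deduces $\varrho^\lambda(t_{k,l})=0$ from irreducibility of $L(\lambda)$, and only then invokes both halves of Lemma \ref{coro of inj} plus a Vandermonde argument to force $t_{k,l}=0$; even rank is indispensable already here, not only in steps (ii) and (iii) as you assert.

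A smaller imprecision: the existence of $z_\lambda=\beta^{-1}(t_\lambda)$ does not follow from non-degeneracy of the Rosso form ($\beta\colon U\to U^*$ is injective but far from surjective on the infinite-dimensional $U$); one must exhibit $t_\lambda$ in $\operatorname{Im}\beta$ by the explicit matrix-coefficient construction of Lemma \ref{Rooso realize} and Theorem \ref{fdmod matrix coefficients}, which is precisely where the restriction $\lambda\in\Lambda^+\cap Q$ enters.
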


\section{Harish-Chandra homomorphism $\xi$ is injective}

For each $i$, there are two skew-derivations $\partial_i, {}_i \partial: U^-_{-\zeta} \rightarrow U^-_{-\zeta+\alpha_i},\; \zeta \in Q^+$ \cite{BGH07}, defined by
\begin{equation*}
	\begin{split}
		\partial_i (1) &= 0, \quad
		\partial_i (f_j) = \delta_{ij}, \quad
		\partial_i (y y') =  \partial_i (y) y' + \langle \omega'_\zeta, \omega_i \rangle y \partial_i (y'),\\
		{}_i \partial (1) &= 0, \quad
		{}_i \partial (f_j) = \delta_{ij}, \quad
		{}_i \partial (y y') = \langle \omega'_i, \omega_{\zeta'} \rangle {}_i \partial (y) y' + y \hspace{0.05cm} {}_i \partial (y'),\\
	\end{split}
\end{equation*}
for all $y \in U^-_{-\zeta},\; y' \in U^-_{-\zeta'}$ and $x \in U^+$, which have the following properties

\begin{lemma} \textup{(cf., \cite{BGH07})} \label{partial}
	\textnormal{(1)} $\langle y, e_i x \rangle = \langle f_i,e_i \rangle \langle \partial_i(y), x \rangle = (s_i - r_i)^{-1} \langle \partial_i(y) , x \rangle$,
	
	\textnormal{(2)} $\langle y, x e_i \rangle = \langle f_i,e_i \rangle \langle {}_i \partial (y), x \rangle = (s_i - r_i)^{-1} \langle {}_i \partial (y) , x \rangle$,	
	
	\textnormal{(3)} $e_i y -y e_i = (r_i - s_i)^{-1} (\omega_i \partial_i(y)  -  {}_i \partial(y) \omega'_i)$,
	
	\textnormal{(4)} If for all $i$, we have $\partial_i (y) = 0$, then $y$ = 0,
	
	\textnormal{(5)} If for all $i$, we have ${}_i \partial (y) = 0$, then $y$ = 0.
\end{lemma}

\begin{theorem}\label{inj if}
	The Harish-Chandra homomorphism $\xi : Z(U) \longrightarrow U^0$ is injective.
\end{theorem}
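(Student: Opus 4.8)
\section*{Proof proposal}

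The plan is to let $z\in\ker\xi$ act on Verma modules and peel off its triangular components one graded layer at a time, ultimately reducing everything to the separation of a family of characters of $Q$ that is governed by the two matrices $R\pm S$. First I would note that $\gamma^{-\rho}$ is invertible: it scales each basis vector $\omega'_\eta\omega_\phi$ of $U^0$ by the nonzero scalar $\varrho^{-\rho}(\omega'_\eta\omega_\phi)$, so $\xi(z)=\gamma^{-\rho}(\pi(z))=0$ forces $\pi(z)=0$. Next, for $\lambda\in\Lambda$ the centrality of $z$ makes it act on $M(\lambda)$ by a scalar: since $\deg z=0$ and $z$ commutes with every $e_i$, the vector $zv_\lambda$ is a highest weight vector of weight $\lambda$, so $zv_\lambda=c_\lambda v_\lambda$ and hence $z=c_\lambda\cdot\mathrm{id}$ on the cyclic module $M(\lambda)$. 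Writing $z=\sum_{\nu\in Q^+}z_\nu$ with $z_\nu\in U^-_{-\nu}U^0U^+_\nu$, the components with $\nu>0$ kill $v_\lambda$ (they end in a positive-degree product of $e_i$'s), so $c_\lambda=\varrho^\lambda(z_0)=\varrho^\lambda(\pi(z))=0$, and $z$ acts as $0$ on every $M(\lambda)$.

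Now suppose $z\neq 0$ and pick $\nu_0\in Q^+\setminus\{0\}$ minimal for the dominance order $\leqslant$ with $z_{\nu_0}\neq 0$ (possible since $z_0=\pi(z)=0$). On the weight space $M(\lambda)_{\lambda-\nu_0}$ only components $z_\nu$ with $\nu\leqslant\nu_0$ can act, and by minimality together with $z_0=0$ this leaves only $z_{\nu_0}$, which therefore acts as $0$ there. Expanding $z_{\nu_0}=\sum_{a,b}f_a\,h_{ab}\,e_b$ in PBW bases $\{f_a\}$ of $U^-_{-\nu_0}$ and $\{e_b\}$ of $U^+_{\nu_0}$ with $h_{ab}\in U^0$, and evaluating on the basis $\{f_cv_\lambda\}$ of $M(\lambda)_{\lambda-\nu_0}$, each $e_bf_cv_\lambda$ is a scalar multiple of $v_\lambda$ whose coefficient is, up to the $\lambda$-twist, the skew-pairing $\langle f_c,e_b\rangle$. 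Since the Verma module is free over $U^-$, the vectors $\{f_av_\lambda\}$ are linearly independent, so the vanishing of $z_{\nu_0}$ becomes a linear system whose coefficient matrix is the level-$\nu_0$ contravariant form. Regarding $\lambda$ as a formal parameter, this form has nonzero determinant in the Laurent ring $\mathbb{K}[Q]$ by Theorem \ref{nondegen for grading}; inverting over its fraction field yields $\varrho^\lambda(h_{ab})=0$ identically in $\lambda$, for all $a,b$.

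It remains to conclude $h_{ab}=0$, and this is where even rank is essential. For $\lambda\in Q$ one has $\varrho^\lambda(\omega'_\eta\omega_\phi)=\langle\omega'_\eta,\omega_{-\lambda}\rangle\langle\omega'_\lambda,\omega_\phi\rangle=\chi_{\eta,\phi}(\lambda,-\lambda)$ (Definition \ref{nondegen}), so $\lambda\mapsto\varrho^\lambda(\omega'_\eta\omega_\phi)$ is a character of $Q$. Comparing $r$- and $s$-exponents (using $R=-S^T$), the characters attached to $(\eta,\phi)$ and $(\eta',\phi')$ agree for all $\lambda\in Q$ iff $R\delta+S\epsilon=0$ and $S\delta+R\epsilon=0$, where $\delta=\eta-\eta'$ and $\epsilon=\phi-\phi'$; adding and subtracting, iff $(R+S)(\delta+\epsilon)=0$ and $(R-S)(\delta-\epsilon)=0$. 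Since $R-S=DC$ is always nonsingular (Proposition \ref{sym}) and $R+S$ is nonsingular precisely when $n$ is even (Proposition \ref{InvR+S}), both force $\delta=\epsilon=0$, so these characters are pairwise distinct. Writing $h_{ab}=\sum_{\eta,\phi}k^{ab}_{\eta,\phi}\,\omega'_\eta\omega_\phi$, the identity $\varrho^\lambda(h_{ab})=0$ is a vanishing $\mathbb{K}$-linear combination of distinct characters of $Q$, so by Dedekind's independence of characters every $k^{ab}_{\eta,\phi}=0$; hence $h_{ab}=0$, i.e. $z_{\nu_0}=0$, a contradiction, and therefore $z=0$. The main obstacle is exactly this separation step: unlike Lemma \ref{chi}, where one is free to test on $(0,\alpha_j)$ and $(\alpha_j,0)$ separately, here the two arguments are locked on the anti-diagonal $(\lambda,-\lambda)$, so both $R-S$ \emph{and} $R+S$ must be invertible, and the failure of the latter in odd rank is precisely what obstructs injectivity there.
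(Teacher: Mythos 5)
Your proposal is correct in outline and shares the paper's overall architecture --- isolate the minimal nonzero graded component of $z$, use highest-weight modules to force the $U^0$-coefficients to vanish under every $\varrho^\lambda$, and then separate the characters $\lambda\mapsto\varrho^\lambda(\omega'_\eta\omega_\phi)$ via the invertibility of $R-S$ and, for $n$ even, of $R+S$ (your final step is exactly the paper's Lemma \ref{coro of inj}, and your closing remark about the anti-diagonal $(\lambda,-\lambda)$ locking the two arguments together is precisely the right diagnosis of why even rank enters). Where you diverge is the middle of the argument. The paper does not let $z$ act on all of $M(\lambda)$; it extracts from $[e_i,z]=0$ the single graded component in $U^-_{-(\nu-\alpha_i)}U^0U^+_{\nu}$, concludes that $m=(\sum_k y_kt_{k,l})\cdot v_\lambda$ is a singular vector in the finite-dimensional simple $L(\lambda)$, hence zero, and then invokes the cited fact that $y\mapsto y\cdot v_{N\rho}$ is injective on $U^-_{-\nu}$ for $N$ large to get $\varrho^\lambda(t_{k,l})=0$. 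You instead work in the Verma module, use freeness of $M(\lambda)$ over $U^-$, and invert the level-$\nu_0$ contravariant form over the fraction field of the character ring. Both routes work; yours trades the paper's appeal to the structure of $L(N\rho)$ for an appeal to generic nondegeneracy of the Shapovalov-type form. Your endgame (Dedekind independence of the characters $\lambda\mapsto\varrho^\lambda(\omega'_\eta\omega_\phi)$ as $\lambda$ ranges over $Q$) is also cleaner than the paper's, which fixes one $\lambda$ and runs a Vandermonde argument on $\varrho^{m\lambda}$.

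The one step you should not leave as an assertion is the claim that the level-$\nu_0$ contravariant form has nonzero determinant ``by Theorem \ref{nondegen for grading}.'' That theorem concerns the skew-pairing $\mathcal{B}'_{-\beta}\times\mathcal{B}_\beta$, not the matrix $\bigl(S_{bc}(\lambda)\bigr)$ with $e_bf_cv_\lambda=S_{bc}(\lambda)v_\lambda$; passing from one to the other requires the standard leading-term argument (the coefficient of the extremal monomial $\omega'_{?}\omega_{?}$ in $\pi(e_bf_c)$ is $\langle f_c,e_b\rangle$ up to a nonzero factor, so the determinant of the form has a nonvanishing leading coefficient in the Laurent ring). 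This is true and routine in the two-parameter setting, but it is a computation with the commutation relations that you have skipped, and it is exactly the work the paper avoids by quoting \cite[Theorem 2.12]{BGH07} and \cite[Corollary 38]{PHR10}. Either carry out that expansion or replace this step by the paper's citation; as written it is the only soft spot in an otherwise sound proof.
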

\begin{proof}
	Consider the triangular decomposition of $U = U^- U^0 U^+$ and set $K = \bigoplus_{\nu>0} U^-_{-\nu} U^0 U^+_{\nu}$, which is a two-sided ideal of $U_0 = U^0 \oplus K$ and $ K= ker(\pi)$.
	The following argument shows that for $z\in Z(U)$ with $\xi (z) = 0$, we will get $z = 0$.
	
	Let $z=\sum_{\nu \in Q^+} z_\nu$, where $ z_\nu \in U^-_{-\nu} U^0 U^+_{\nu}$. The condition $\xi (z) = 0$ implies $z_0=0$. Let us assume that $z\ne0$, then there exists a minimal $\nu \in Q^+$ such that $z_\nu \neq 0,\; \nu>0$ , and choose bases $\{x_l\}$ and $\{y_k\}$ for spaces
	$U^+_{+\nu}$ and $ U^-_{-\nu}$, respectively. Now we write $z_\nu = \sum_{k,l} y_k t_{k,l} x_l, \ t_{k,l} \in U^0$.  Then for all $i=1,\cdots,n$, we have
	\begin{align*}
		0 &=e_{i} z-z e_{i} \\
		&=\sum_{\gamma \neq \nu}\left(e_{i} z_{\gamma}-z_{\gamma} e_{i}\right)+ \left(e_{i} z_{\nu}-z_{\nu} e_{i}\right) \\
		&=\sum_{\gamma \neq \nu}\left(e_{i} z_{\gamma}-z_{\gamma} e_{i}\right)+\sum_{k, l}\left(e_{i} y_{k}-y_{k} e_{i}\right) t_{k, l} x_{l}+\sum_{k, l} y_{k}\left(e_{i} t_{k, l} x_{l}-t_{k, l} x_{l} e_{i}\right).
	\end{align*}
	Since $e_i y_k - y_k e_i = (r_i - s_i)^{-1} (\omega_i \partial_i(y)  -  {}_i \partial(y) \omega'_i) \in U^-_{-(\nu-\alpha_i)} U^0$, only the second term in the equation falls in $U^-_{-(\nu-\alpha_i)}U^0 U^+_{\nu}$, so it is forced to be $0,$ that is
	\begin{align*}
		\sum_{k, l}\left(e_{i} y_{k}-y_{k} e_{i}\right) t_{k, l} x_{l} = 0.
	\end{align*}
	Now write $t_{k,l} = \sum_{\eta,\phi \in Q} g^{k,l}_{\eta,\phi} \omega'_\eta \omega_\phi$, then for all $k,l,\eta,\phi$, if there exists a maximal $\eta_0$ and some $k_0, l_0, \phi_0$ such that $g^{k_0,l_0}_{\eta_0,\phi_0}\neq 0$ then we have
	\begin{align*}
		0 &=\sum_{k, l}\left(e_{i} y_{k}-y_{k} e_{i}\right) t_{k, l} x_{l} \\
		& = (r_i - s_i)^{-1} \sum_{k, l, \eta,\phi}  g^{k,l}_{\eta,\phi} (\omega_i \partial_i(y_k) - {}_i \partial (y_k) \omega'_i) \omega'_\eta \omega_\phi x_l
	\end{align*}
	for all $i$. Since $\eta_0$ is maximal, then $g^{k_0,l_0}_{\eta_0,\phi_0} {}_i \partial (y_{k_0}) \omega'_{\eta_0+\alpha_i} \omega_{\phi_0} x_{l_0}$ has to be zero for all $i$, which force ${}_i \partial (y_{k_0}) = 0$ for all $i$. Then by Lemma \ref{partial} (5), we have $y_{k_0} = 0$, which is a contradiction. Hence all $t_{k,l} = 0$, and then $z_{\nu}=0$. Finally we have $z=0$.
\end{proof}

\begin{remark}
	Note that in the case when rank $n$ is odd, the proof of proposition 5.2 $($see p. 458, -line 3 \cite{BKL06}$)$ for type $A_n$ really contains a gap, as was pointed out in Remark 3.5 \cite{HS14}.
\end{remark}

\section{The image of the Harish-Chandra homomorphism $\xi$ with even rank}

Define an algebra homomorphisms $\varrho^{\lambda,\mu}$ from $U^0$ to $\mathbb{K}$ as $\varrho^{\lambda,\mu}= \varrho^{0,\lambda} \varrho^{\mu,0}, \lambda,\mu$ $ \in$ $ \Lambda$, where
\begin{align*}
	&\varrho^{0,\mu}: 	\omega'_\eta \omega_\phi \mapsto (rs^{-1})^{(\eta+\phi,\mu)}, \\
	&\varrho^{\lambda,0}:
	\omega'_\eta \omega_\phi \mapsto \varrho^\lambda(\omega'_\eta \omega_\phi).
\end{align*}

\begin{lemma} \label{nondegen of varho}
	Suppose $n$ is even, then
	
	$(1)$ let $u = \omega'_\eta \omega_\phi,\; \eta,\phi \in Q. \,$ If $\varrho^{\lambda,\mu} (u) = 1$ for all $\lambda,\mu \in \Lambda$, then $u=1;$
	
	$(2)$ if $u \in U^0$ satisfying $\varrho^{\lambda,\mu} (u) = 0$ for all $\lambda,\mu \in \Lambda$, then $u=0.$
\end{lemma}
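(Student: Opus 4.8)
The plan is to reduce both statements to the explicit character formula already recorded in the proof of Lemma \ref{coro of inj} together with Dedekind's theorem on the linear independence of characters. For part $(1)$, I would first unfold the definitions: for $u = \omega'_\eta\omega_\phi$ one has
\[
\varrho^{\lambda,\mu}(\omega'_\eta\omega_\phi) = (rs^{-1})^{(\eta+\phi,\lambda)}\,\varrho^\mu(\omega'_\eta\omega_\phi),
\]
into which I substitute $\varrho^\mu(\omega'_\eta\omega_\phi) = r^{\mu^T_\alpha(R^T\phi_\alpha + S^T\eta_\alpha)}\,s^{\mu^T_\alpha(S^T\phi_\alpha + R^T\eta_\alpha)}$ from that proof. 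Since $r$ and $s$ are algebraically independent indeterminates, the hypothesis $\varrho^{\lambda,\mu}(u) = 1$ forces the total $r$-exponent to vanish for all $\lambda,\mu \in \Lambda$. The essential point is that the $\lambda$- and $\mu$-contributions decouple: setting $\mu = 0$ gives $(\eta+\phi,\lambda) = 0$ for every $\lambda \in \Lambda$, and since $\Lambda$ spans $E$ and $(\cdot,\cdot)$ is nondegenerate, $\eta + \phi = 0$; then setting $\lambda = 0$ gives $R^T\phi_\alpha + S^T\eta_\alpha = 0$. Substituting $\phi_\alpha = -\eta_\alpha$ yields $(R-S)^T\eta_\alpha = 0$, and as $R-S$ is invertible by Proposition \ref{sym}, we get $\eta = \phi = 0$, i.e. $u = 1$.

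For part $(2)$, I would write $u = \sum_{\eta,\phi} k_{\eta,\phi}\,\omega'_\eta\omega_\phi$ as a finite combination of distinct monomials. The assignment $\chi_{\eta,\phi}\colon (\lambda,\mu)\mapsto \varrho^{\lambda,\mu}(\omega'_\eta\omega_\phi)$ is a group homomorphism $\Lambda\times\Lambda \to \mathbb{K}^\times$, because $\varrho^{0,\lambda}(\omega'_\eta\omega_\phi)$ is additive in $\lambda$ and $\varrho^{\mu,0}(\omega'_\eta\omega_\phi) = \varrho^\mu(\omega'_\eta\omega_\phi)$ is multiplicative in $\mu$ via $\varrho^{\mu+\mu'} = \varrho^\mu\varrho^{\mu'}$. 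By part $(1)$, $\chi_{\eta,\phi} = \chi_{\eta',\phi'}$ forces $(\eta-\eta',\phi-\phi') = (0,0)$, so distinct monomials give pairwise distinct characters. Dedekind's theorem then guarantees that these characters are $\mathbb{K}$-linearly independent, whence $\varrho^{\lambda,\mu}(u) = \sum_{\eta,\phi} k_{\eta,\phi}\,\chi_{\eta,\phi}(\lambda,\mu) \equiv 0$ forces every $k_{\eta,\phi} = 0$, that is, $u = 0$.

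The only step requiring genuine care is the decoupling in part $(1)$: one must keep the $\lambda$- and $\mu$-dependences truly separate, remembering that they sit in the $(rs^{-1})$-factor and the $\varrho^\mu$-factor respectively, and that $r,s$ being indeterminates lets one read off each exponent as an honest linear equation in $\lambda$ and in $\mu$. It is worth emphasizing that, in contrast to Lemma \ref{coro of inj}$(\text{ii})$, this argument uses only the invertibility of $R-S$, which is always available, rather than that of $R+S$; the extra parameter $\lambda$ supplies exactly the constraint $\eta+\phi = 0$ that previously demanded the evenness of $n$. Part $(2)$ is then a routine application of Dedekind's theorem, identical in spirit to the proof that the Rosso form is nondegenerate.
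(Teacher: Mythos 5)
Your proof is correct and follows essentially the same route as the paper's: decouple the $(rs^{-1})^{(\eta+\phi,\,\cdot\,)}$-factor from the $\varrho^{\,\cdot\,}$-factor by setting one parameter to zero, use the invertibility of $R-S$ to conclude $\eta=\phi=0$ in part $(1)$, and invoke Dedekind's linear independence of the distinct characters $\kappa_{\eta,\phi}$ in part $(2)$. (You read the definition $\varrho^{\lambda,\mu}=\varrho^{0,\lambda}\varrho^{\mu,0}$ literally while the paper's own proof uses the factorization $\varrho^{\lambda}(u)\varrho^{0,\mu}(u)$, but since $\lambda$ and $\mu$ both range over all of $\Lambda$ this swap of indices is immaterial.)
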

\begin{proof}
	(1) Fix $\eta,\ \phi \in Q, \; \lambda \in \Lambda$,
	and write them as $\eta = \sum\limits_{i=1}^n \eta_i \alpha_i, \ \phi=\sum\limits_{i=1}^n \phi_i \alpha_i,\ \lambda = \sum\limits_{i=1}^n \lambda_i \alpha_i,$ $   \eta_i,\ \phi_i \in \mathbb{Z},\ \lambda_i \in \mathbb{Q}$.
	For simplicity, we use the same symbols to represent the column vectors of
	$\eta, \phi$ and $\lambda$ respect to the basis $\{\alpha_i\}_{i=1}^n$.
	By $1 = \varrho^{\lambda,\mu} (u) = \varrho^\lambda(u) \varrho^{0,\mu} (u)$ and
	\begin{align*}
		\varrho^\lambda (\omega_\phi) 		
		& = \prod_{j=1}^{n} (\varrho^\lambda(\omega_j))^{\phi_j}
		=  \prod_{j,i=1}^{n} a_{ji}^{\lambda_i \phi_j}  \label{rho^lambda}= \prod_{j,i=1}^{n} r^{R_{ji} \lambda_i \phi_j} s^{S_{ji} \lambda_i \phi_j}  \\
		& 			
		= r^{\phi^T R \lambda} \cdot s^{\phi^T S \lambda}
		=r^{\lambda^T R^T \phi} \cdot s^{\lambda^T S^T\phi },\\
		\varrho^\lambda (\omega'_\eta)
		& = s^{\eta^T R \lambda}\cdot r^{\eta^T S \lambda}=s^{\lambda^T R^T \eta}\cdot r^{\lambda^T S^T\eta},
	\end{align*}
	we have
	\begin{gather*}
		\varrho^\lambda (\omega'_\eta \omega_\phi)
		= r^{\lambda^T (R^T \phi + S^T \eta)} s^{\lambda^T(S^T \phi + R^T \eta)}, \\
		\varrho^{0,\mu} (\omega'_\eta \omega_\phi) =(rs^{-1})^{(\eta+\phi,\mu)}.
	\end{gather*}
	It follows that
	\[
	\begin{cases}
		\hspace{0.3cm} (\eta + \phi,\mu) + \lambda^T (R^T \phi + S^T \eta) = 0, \\
		-(\eta + \phi,\mu) + \lambda^T (S^T \phi + R^T \eta) = 0.	
	\end{cases}
	\]
	Set $\lambda = 0$, we have $\eta+\phi = 0.\,$ The invertible $R-S$ leads to $\eta = \phi = 0$, i.e., $u =1.$
	
	(2) Fixing a pair $(\eta, \phi) \in Q\times Q,$ one can define a character $\kappa_{\eta,\phi} $ on the group $\Lambda \times \Lambda$ to be $\kappa_{\eta,\phi}:  (\lambda,\mu) \mapsto \varrho^{\lambda,\mu} (\omega'_\eta \omega_\phi).$
	
	Let $u = \sum\limits_{(\eta,\phi)} k_{\eta,\phi}  \omega'_\eta \omega_\phi, \; k_{\eta,\phi} \in \mathbb{K}.\,$ Then
	\[ 0 = \varrho^{\lambda,\mu} (u)
	= \sum\limits_{(\eta,\phi)} k_{\eta,\phi} \varrho ^{\lambda,\mu} (\omega'_\eta \omega_\phi)
	=  \sum\limits_{(\eta,\phi)} k_{\eta,\phi} \kappa_{\eta,\phi} (\lambda,\mu).
	\]
	Since characters $\{\kappa_{\eta,\phi}\}$ are different from each other, we have $k_{\eta,\phi} = 0$, that is, $u = 0.$
\end{proof}

\begin{proposition} \label{U0bWeyl}
	$\varrho^{\sigma(\lambda),\mu}(u) = \varrho^{\lambda,\mu} (\sigma^{-1}(u))$, for $u \in U^0_\flat,\; \sigma \in W,\; \lambda,\; \mu \in \Lambda$.
\end{proposition}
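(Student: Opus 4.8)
The plan is to reduce the identity to the spanning elements of $U^0_\flat$ and then evaluate both characters by hand. Since $\varrho^{\sigma(\lambda),\mu}$ and the map $u \mapsto \varrho^{\lambda,\mu}(\sigma^{-1}(u))$ are both $\mathbb{K}$-linear in $u$, and $U^0_\flat = \bigoplus_{\eta \in Q}\mathbb{K}\,\omega'_\eta\omega_{-\eta}$, it suffices to verify the equality on a single basis vector $u = \omega'_\eta\omega_{-\eta}$; note that $\sigma^{-1}(u) = \omega'_{\sigma^{-1}(\eta)}\omega_{-\sigma^{-1}(\eta)}$ again lies in $U^0_\flat$, so the same evaluation applies to it. The first thing I would observe is that the $\varrho^{0,\cdot}$-component of $\varrho^{\lambda,\mu}$, which sends $\omega'_\eta\omega_\phi \mapsto (rs^{-1})^{(\eta+\phi,\cdot)}$, is identically $1$ on $U^0_\flat$ because there $\phi = -\eta$ and hence $\eta+\phi=0$. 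Consequently, on $U^0_\flat$ the character $\varrho^{\lambda,\mu}$ collapses to its Verma-type factor $\varrho^{(\cdot)}$, governed solely by the parameter that is being reflected.

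Next I would evaluate that surviving factor explicitly. Combining formula (\ref{rho^lambda}) with its companion $\varrho^\lambda(\omega'_\eta) = s^{\eta_\alpha^T R\lambda_\alpha}r^{\eta_\alpha^T S\lambda_\alpha}$ from the proof of Lemma \ref{coro of inj}, a short cancellation gives $\varrho^\lambda(\omega'_\eta\omega_{-\eta}) = (rs^{-1})^{-\lambda_\alpha^T(R-S)\eta_\alpha}$, and Proposition \ref{sym} rewrites $R-S = \frac{2}{\ell}\big((\alpha_i,\alpha_j)\big)$, so that $\varrho^\lambda(\omega'_\eta\omega_{-\eta}) = (rs^{-1})^{-\frac{2}{\ell}(\lambda,\eta)}$. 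Substituting, the left side becomes $(rs^{-1})^{-\frac{2}{\ell}(\sigma(\lambda),\eta)}$ and the right side $(rs^{-1})^{-\frac{2}{\ell}(\lambda,\sigma^{-1}(\eta))}$; these coincide precisely because $W$ acts orthogonally with respect to $(-,-)$, which is the content of Corollary \ref{InvR-S}, giving $(\sigma(\lambda),\eta) = (\lambda,\sigma^{-1}(\eta))$. This settles the identity on a spanning set and therefore in general.

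I do not expect a genuine obstacle: once the two structural facts are isolated, namely that the symmetric $(rs^{-1})^{(\eta+\phi,\cdot)}$-factor is trivial on the ``diagonal'' lattice $U^0_\flat$, and that the surviving exponent is a scalar multiple of the $W$-invariant form encoded in $R-S$, the statement is a one-line computation. The only delicate point is bookkeeping: one must match the superscript of $\varrho^{\lambda,\mu}$ that actually survives on $U^0_\flat$ with the one carrying the reflection $\sigma$, and keep track of the inverse so that the orthogonality relation $(\sigma(\lambda),\eta) = (\lambda,\sigma^{-1}(\eta))$ is invoked in the correct direction.
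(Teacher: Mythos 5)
Your proof is correct and follows essentially the same route as the paper: split $\varrho^{\lambda,\mu}$ into its two factors, note the $(rs^{-1})^{(\eta+\phi,\mu)}$-factor is trivially $1$ on $U^0_\flat$, and reduce the remaining factor to the $W$-invariance of the form encoded in $R-S$ (Proposition \ref{sym} / Corollary \ref{InvR-S}). The only (harmless) difference is that you rewrite the exponent as $-\tfrac{2}{\ell}(\lambda,\eta)$ and treat arbitrary $\sigma$ at once, whereas the paper reduces to simple reflections and manipulates the matrices $\Sigma_i$ directly.
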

\begin{proof}
	It is sufficient to check the case that $u = \omega'_\eta \omega_{-\eta}$ and $\sigma = \sigma_i$ a simple reflection.
	As we did earlier, use subscript $\alpha$ to denote ones column vectors with respect to the basis $\{\alpha_i\}$. By Corollary \ref{InvR-S}, we have
	\begin{align*}
		\varrho^{\lambda,0}(\sigma_i^{-1} (u))
		&=\varrho^\lambda (\omega'_{\sigma_i (\eta)} \omega_{-\sigma_i (\eta)} ) \\
		&= r^{ \lambda^T (R^T(\sigma_i (\eta)) + S^T(\sigma_i(-\eta)) )  }
		s^{ \lambda^T (S^T(\sigma_i (\eta)) + R^T(\sigma_i(-\eta)) )  } \\
		&= r^{ \lambda^T (R-S) \Sigma_i \cdot \eta}
		s^{ \lambda^T (S-R) \Sigma_i \cdot \eta}\\
		&= r^{ \lambda^T \Sigma_i^T \cdot(R-S)  \eta}
		s^{ \lambda^T \Sigma_i^T \cdot(S-R)  \eta}\\
		&= r^{ (\sigma_i(\lambda))^T (R^T \eta + S^T(-\eta))}
		s^{ (\sigma_i(\lambda))^T (S^T \eta + R^T(-\eta))} \\
		&=\varrho^{\sigma_i(\lambda)} (\omega'_\eta \omega_{-\eta})
		=\varrho^{\sigma_i(\lambda),0} (u), \\
	\end{align*}
	\begin{align*}
		\varrho^{0,\mu}(\sigma^{-1} (u))
		&= \varrho^{0,\lambda} (\sigma^{-1} (\omega'_\eta \omega_{-\eta})) \\
		&= (rs^{-1})^{( \sigma^{-1} (\eta) + \sigma^{-1}(-\eta),\mu)}\\
		&= (rs^{-1})^{(0,\mu)} = \varrho^{0,\mu} (\omega'_\eta \omega_{-\eta}).
	\end{align*}
	So $\varrho^{\lambda,\mu}(\sigma^{-1}(u)) = \varrho^{\sigma(\lambda), \mu} (u)$.
\end{proof}
Define a subalgebra $(U^0_\flat)^{W}=\{u \in U^0_\flat \;|\; \sigma(u)=u,\, \forall  \sigma \in W\}$ and characters $\kappa_{\eta,\phi}: (\lambda,\mu) \mapsto \varrho^{\lambda,\mu}(\omega'_\eta \omega_\phi),$ on $\Lambda \times \Lambda$ for each $(\eta,\phi) \in Q \times Q$. Further we define $\kappa^i_{\eta,\phi}: (\lambda,\mu)\mapsto \varrho^{\sigma_i(\lambda),\mu}(\omega'_\eta \omega_\phi).$

\begin{lemma} \label{If in}  Suppose that rank $n$ is even, $ \sigma \in W,\; \lambda, \mu \in \Lambda.$ If $u\in U^0$ satisfies that $\varrho^{\sigma(\lambda),\mu}(u) = \varrho^{\lambda,\mu} (u)$, then $u\in (U^0_\flat)^W.$
\end{lemma}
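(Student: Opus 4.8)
The plan is to split the argument into two stages: first deduce from the hypothesis that $u$ already lies in $U^0_\flat$, and then, with that in hand, upgrade to full $W$-invariance. Write $u=\sum_{(\eta,\phi)}k_{\eta,\phi}\,\omega'_\eta\omega_\phi$ with finitely many nonzero $k_{\eta,\phi}\in\mathbb{K}$. Since $U^0_\flat=\bigoplus_{\eta\in Q}\mathbb{K}\,\omega'_\eta\omega_{-\eta}$, membership $u\in U^0_\flat$ is exactly the statement that $k_{\eta,\phi}=0$ whenever $\eta+\phi\neq 0$, so the entire difficulty is concentrated in the first stage.

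For the first stage I would fix a simple reflection $\sigma=\sigma_i$ and read the hypothesis as an identity of characters on $\Lambda\times\Lambda$. Writing $\kappa^i_{\eta,\phi}(\lambda,\mu)=\varrho^{\sigma_i(\lambda),\mu}(\omega'_\eta\omega_\phi)$ and $\kappa_{\eta,\phi}(\lambda,\mu)=\varrho^{\lambda,\mu}(\omega'_\eta\omega_\phi)$ as in the paragraph before the statement, the hypothesis becomes
\[
\sum_{(\eta,\phi)}k_{\eta,\phi}\bigl(\kappa^i_{\eta,\phi}-\kappa_{\eta,\phi}\bigr)=0
\]
as a function on $\Lambda\times\Lambda$. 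All the $\kappa^i_{\eta,\phi}$ and $\kappa_{\eta,\phi}$ are group homomorphisms $\Lambda\times\Lambda\to\mathbb{K}^\times$, so by Dedekind's independence of distinct characters the total coefficient of each character occurring must vanish. Using formula (\ref{rho^lambda}) and the computation in the proof of Lemma \ref{coro of inj}, each $\kappa_{\eta,\phi}$ is determined by the triple $(a,b,c)$ with $a=R^T\phi_\alpha+S^T\eta_\alpha$, $b=S^T\phi_\alpha+R^T\eta_\alpha$ (the $r$- and $s$-exponents in $\lambda$) and $c=\eta+\phi$ (governing the $\mu$-part through $(rs^{-1})^{(c,\mu)}$); the map $(\eta,\phi)\mapsto(a,b,c)$ is injective because $c$ recovers $\eta+\phi$ while $a-b=(R-S)^T(\phi-\eta)_\alpha$ recovers $\phi-\eta$ by Proposition \ref{sym}. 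Passing from $\lambda$ to $\sigma_i(\lambda)$ replaces $(a,b)$ by $(\Sigma_i^T a,\Sigma_i^T b)$ and leaves $c$ fixed; I record the invariant $a+b=(R+S)^T c_\alpha$.

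Now fix $(\eta_0,\phi_0)$ with $c_0:=\eta_0+\phi_0\neq 0$ and inspect the total coefficient of the character $\kappa_{\eta_0,\phi_0}$. Any $\kappa^i_{\eta,\phi}$ coinciding with $\kappa_{\eta_0,\phi_0}$ must share its $\mu$-part, forcing $\eta+\phi=c_0$, and then equality of the $(a+b)$-invariants would give $\Sigma_i^T(R+S)^T c_{0,\alpha}=(R+S)^T c_{0,\alpha}$. The crux of the proof — and the only point where evenness of $n$ is used — is to choose the simple reflection so this fails: I claim that for $c_0\neq 0$ some $\sigma_i$ does not fix $(R+S)^T c_{0,\alpha}$. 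A vector fixed by every $\Sigma_i^T$ is fixed by every $\Sigma^T$, and by Corollary \ref{InvR-S} we have $\Sigma^T=(R-S)\Sigma(R-S)^{-1}$, so the common $\Sigma^T$-fixed subspace is the image under $R-S$ of the common $\Sigma$-fixed subspace, which is zero for the reflection representation of the Weyl group of a simple $\mathfrak{g}$. Hence $(R+S)^T c_{0,\alpha}$ is fixed by all $\Sigma_i^T$ only if it vanishes, and since $R+S$ is invertible for even $n$ by Proposition \ref{InvR+S}, this happens only when $c_0=0$. Choosing such a $\sigma_i$, no $\kappa^i$-term can cancel $\kappa_{\eta_0,\phi_0}$, so the vanishing of its total coefficient gives $k_{\eta_0,\phi_0}=0$. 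As $c_0\neq 0$ was arbitrary, $u\in U^0_\flat$.

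Finally, for the second stage I would invoke Proposition \ref{U0bWeyl}, now available since $u\in U^0_\flat$: the hypothesis rewrites as $\varrho^{\lambda,\mu}(\sigma^{-1}(u))=\varrho^{\lambda,\mu}(u)$, i.e. $\varrho^{\lambda,\mu}\bigl(\sigma^{-1}(u)-u\bigr)=0$ for all $\lambda,\mu\in\Lambda$. Lemma \ref{nondegen of varho}(2) then forces $\sigma^{-1}(u)=u$, and since $\sigma\in W$ was arbitrary we conclude $u\in(U^0_\flat)^W$. The expected main obstacle is the fixed-point argument of the third paragraph, since it is exactly here that one must combine the invertibility of $R+S$ (even rank) with the absence of nonzero $W$-fixed vectors, transported through the conjugacy $\Sigma^T\sim\Sigma$ supplied by Corollary \ref{InvR-S}.
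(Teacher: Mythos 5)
Your argument is correct, and it follows the paper's two-stage skeleton exactly: first force $u\in U^0_\flat$ from linear independence of the characters $\kappa_{\eta,\phi}$, $\kappa^i_{\eta,\phi}$ together with the invertibility of $R+S$ for even $n$, then conclude $W$-invariance from Proposition \ref{U0bWeyl} and Lemma \ref{nondegen of varho}(2); your second stage coincides with the paper's verbatim. The difference is in how stage one is executed. The paper matches each surviving term $\kappa_{\eta,\phi}$ with a term $\kappa^i_{\zeta,\psi}$, evaluates the resulting identity at $(0,\varpi_j)$ and $(\varpi_i,0)$, and adds the two exponent equations to obtain $\varepsilon_i^T(R+S)^T(\zeta+\psi)_\alpha=0$ for every $i$, killing $(R+S)^T(\eta+\phi)_\alpha$ coordinate by coordinate. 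You instead observe that a matching under $\sigma_i$ forces the invariant $a+b=(R+S)^T(\eta+\phi)_\alpha$ to be $\Sigma_i^T$-fixed, and rule this out for a well-chosen $i$ by combining Corollary \ref{InvR-S} (which conjugates $\Sigma^T$ to $\Sigma$) with the standard fact that the reflection representation of $W$ has no nonzero fixed vectors; the coefficient $k_{\eta_0,\phi_0}$ then vanishes because nothing is left to cancel it. Both routes rest on the same two nondegeneracy inputs ($R-S$ unconditionally, $R+S$ for $n$ even). The paper's version is more elementary and purely computational; yours imports the fixed-point-free property of the reflection representation, which makes the role of the Weyl group more transparent and sidesteps the mild bookkeeping issue that the matched pair $(\zeta,\psi)$ a priori depends on $i$ (harmless in the paper only because $\zeta+\psi=\eta+\phi$ is $i$-independent). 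One detail worth making explicit in your write-up: the total coefficient of $\kappa_{\eta_0,\phi_0}$ in your character identity is $-k_{\eta_0,\phi_0}$ plus at most one $\kappa^i$-contribution, which follows from the injectivity of $(\eta,\phi)\mapsto(a,b,c)$ that you already recorded.
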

\begin{proof}
	Let $u = \sum\limits_{(\eta,\phi)} k_{\eta,\phi} \omega'_\eta \omega_\phi \in U^0\,$, then
	\begin{align*}
		\sum\limits_{(\eta,\phi)} k_{\eta,\phi} \varrho^{\lambda,\mu} (\omega'_\eta \omega_\phi) =
		\sum\limits_{(\zeta,\psi)} k_{\zeta,\psi} \varrho^{\sigma(\lambda),\mu} (\omega'_\zeta \omega_\psi),	
	\end{align*}
	hence we have an equation for characters:
	\begin{align*}
		\sum\limits_{(\eta,\phi)} k_{\eta,\phi} \kappa_{\eta,\phi} =
		\sum\limits_{(\zeta,\psi)} k_{\zeta,\psi} \kappa^i_{\zeta,\psi}.
	\end{align*}
	Comparing the two sides of the equation, for each $k_{\eta,\phi} \neq 0$, there exists one $(\zeta,\psi) \in Q \times Q,$ such that $ \kappa_{\eta,\phi} = \kappa^i_{\zeta,\psi} $ and $ k_{\zeta,\psi} = k_{\eta,\phi}.\,$ Then
	\begin{equation*}
		\begin{split}
			\kappa_{\eta,\phi}(0,\varpi_j) &
			= \varrho^{0,\varpi_j} (\omega'_\eta \omega_\phi) = (rs^{-1})^{(\eta+\phi,\varpi_j)}\\
			&=\kappa_{\zeta,\phi}^i (0,\varpi_j) \\
			&= \varrho^{0,\varpi_j}(\omega'_\zeta \omega_\psi) = (rs^{-1})^{(\zeta+\phi,\varpi_j)}
		\end{split}
	\end{equation*}
	yields $\eta+\phi = \zeta+\psi$, and
	\begin{align*}
		\varrho^{\varpi_i} (\omega'_\eta \omega_\phi)
		= \varrho^{\sigma_i(\varpi_i)} (\omega'_\zeta \omega_\phi)
		= \varrho^{\varpi_i - \alpha_i}(\omega'_\zeta \omega_\phi).
	\end{align*}
	Since $\phi = \zeta+\psi-\eta$, it follows that
	\begin{align*}
		\varrho^{\varpi_i}(\omega'_{\eta-\zeta} \omega_{-(\eta-\zeta)})
		\varrho^{\alpha_i}(\omega'_\zeta \omega_{-\zeta}) =
		\varrho^{-\alpha_i}(\omega_{\zeta+\phi}).
	\end{align*}
	Rewrite the equation in the form of $r^k s^l = 1$, for $i= 1,\cdots,n$, we have
	\begin{align*}
		(\varpi_i) ^T (R-S)^T (\eta-\zeta) + \varepsilon_i^T (R-S)^T \zeta + \varepsilon_i^T S^T (\zeta+\psi) = 0, \\
		(\varpi_i) ^T (S-R)^T (\eta-\zeta) + \varepsilon_i^T (S-R)^T \zeta + \varepsilon_i^T R^T (\zeta+\psi) = 0,
	\end{align*}
	where $\varepsilon_i$ is the $i$-th unit vector. Adding two equations, one has $\varepsilon_i^T (R+S)^T (\zeta+\psi) = 0,\;i= 1,\cdots,n.\,$ That is, $(R+S)^T (\zeta+\psi) = 0.$
	Since $n$ is even, det$(R+S) \neq 0,$ we have $\zeta+\psi=0=\phi+\eta.\,$ Then
	\begin{align*}
		u = \sum\limits_{(\eta,-\eta)} k_{\eta,-\eta} \omega'_\eta \omega_{-\eta} \in U^0_\flat.
	\end{align*}
	Finally, by Lemma \ref{U0bWeyl},
	\begin{align*}
		\varrho^{\lambda,\mu} (\sigma^{-1}(u)) = \varrho^{\sigma(\lambda),\mu} (u)  = \varrho^{\lambda,\mu}(u),\quad \forall\; \lambda,\mu\in\Lambda,\; \sigma\in W,
	\end{align*}
	which yields $u = \sigma^{-1} (u)$ for all $\sigma \in W$. So $u \in (U^0_\flat)^W.$
\end{proof}

\begin{theorem} \label{Image in}
	Suppose rank $n$ is even, then $\varrho^{\lambda+\rho,\mu}  (\xi(z)) = \varrho^{\sigma(\lambda+\rho),\mu} (\xi(z)),\, \forall\; z$ $ \in Z(U),\; \sigma \in W,\; \lambda,\; \mu \in \Lambda.\,$
	As a result, we have $\operatorname{Im} (\xi) \subseteq (U^0_\flat)^W.$
\end{theorem}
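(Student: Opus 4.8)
The plan is to factor $\xi$ through the projection $\pi$, read off the scalar by which a central element acts on a highest weight module, and then use the singular vectors already packaged in Lemma \ref{irrmod} to force the Weyl symmetry, before feeding the result into Lemma \ref{If in}. First I would record the reduction. Since $\xi(z)=\gamma^{-\rho}(\pi(z))$ and $\gamma^{-\rho}$ only rescales each $\omega'_\eta\omega_\phi$ by $\varrho^{-\rho}(\omega'_\eta\omega_\phi)$, applying the character $\varrho^{\lambda+\rho,\mu}$ and using $\varrho^{\lambda+\rho}=\varrho^\lambda\varrho^\rho$ together with $\varrho^{\lambda,\mu}=\varrho^{\lambda,0}\varrho^{0,\mu}$ gives
\begin{align*}
\varrho^{\lambda+\rho,\mu}(\xi(z))=\varrho^{\lambda,\mu}(\pi(z)),\qquad \forall\,\lambda,\mu\in\Lambda .
\end{align*}
The right-hand side is exactly the scalar by which $z$ acts on the highest weight module with highest weight the $U^0$-character $\varrho^{\lambda,\mu}$: writing $z=\sum_{\nu\in Q^+}z_\nu$ with $z_\nu\in U^-_{-\nu}U^0U^+_\nu$, any highest weight vector is killed by $U^+_\nu$ for $\nu>0$, so only $z_0=\pi(z)$ survives, and $z$ acts on the whole cyclic module by $\varrho^{\lambda,\mu}(\pi(z))$. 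Replacing $\lambda$ by $\sigma\cdot\lambda:=\sigma(\lambda+\rho)-\rho$ turns the asserted identity into the linkage statement $\varrho^{\lambda,\mu}(\pi(z))=\varrho^{\sigma\cdot\lambda,\mu}(\pi(z))$ for all $\lambda,\mu\in\Lambda$ and $\sigma\in W$, which is what I will establish.

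Next I would treat a simple reflection $\sigma_i$ and $\lambda\in\Lambda^+$ (simple reflections generate $W$). Fix $\mu\in\Lambda$, let $v$ be the highest weight vector of $M(\varrho^{\lambda,\mu})$ and put $m=(\lambda,\alpha_i^\vee)+1=\langle\lambda+\rho,\alpha_i^\vee\rangle$. The vector $f_i^m v$ is nonzero (Verma modules are free over $U^-$); by the relations $(X2)$ its $U^0$-character is $\varrho^{\lambda,\mu}$ multiplied by the purely $\varrho^{\bullet,0}$-type factor $\langle\omega'_i,\omega_{\,\cdot}\,\rangle^{-m}=\varrho^{-m\alpha_i,0}$, hence equals $\varrho^{\lambda-m\alpha_i,\mu}=\varrho^{\sigma_i\cdot\lambda,\mu}$ (the second parameter $\mu$ is unaffected). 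Moreover $f_i^m v$ is singular: for $j\neq i$ relation $(X3)$ gives $[e_j,f_i]=0$, so $e_jf_i^m v=0$, while for $j=i$ the rank-one computation underlying Lemma \ref{irrmod} yields $e_if_i^m v=0$ precisely at this $m$. Crucially, the critical exponent in that rank-one computation depends only on the ratio $\varrho^{\lambda,\mu}(\omega_i)/\varrho^{\lambda,\mu}(\omega'_i)$, and since the $\varrho^{0,\mu}$-twist scales $\omega_i$ and $\omega'_i$ by the same factor $(rs^{-1})^{(\alpha_i,\mu)}$, this ratio is independent of $\mu$; thus $m$ coincides with the value from Lemma \ref{irrmod}. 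Computing $z\cdot f_i^m v$ two ways — by centrality it equals $\varrho^{\lambda,\mu}(\pi(z))\,f_i^m v$, and as $f_i^m v$ is a nonzero highest weight vector of character $\varrho^{\sigma_i\cdot\lambda,\mu}$ it equals $\varrho^{\sigma_i\cdot\lambda,\mu}(\pi(z))\,f_i^m v$ — yields $\varrho^{\lambda,\mu}(\pi(z))=\varrho^{\sigma_i\cdot\lambda,\mu}(\pi(z))$ for every $\lambda\in\Lambda^+$ and $\mu\in\Lambda$.

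It remains to extend this from $\Lambda^+$ to all of $\Lambda$ and to conclude. For fixed $\mu$, both sides are finite $\mathbb{K}$-linear combinations of the characters $\lambda\mapsto\varrho^\lambda(\omega'_\eta\omega_\phi)$. Fixing a generic $\lambda_0\in\Lambda^+$ and evaluating at $\lambda=\lambda_0,2\lambda_0,\dots$ produces a Vandermonde system exactly as at the end of the proof of Theorem \ref{inj if}; here $n$ even guarantees, via Lemma \ref{coro of inj}, that the nodes $\varrho^{\lambda_0}(\omega'_\eta\omega_\phi)$ are pairwise distinct, so all coefficients vanish and the identity holds for all $\lambda\in\Lambda$. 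Since this works for every $\mu$ and $\sigma\in W$, I obtain $\varrho^{\lambda,\mu}(\pi(z))=\varrho^{\sigma\cdot\lambda,\mu}(\pi(z))$, i.e.\ $\varrho^{\lambda+\rho,\mu}(\xi(z))=\varrho^{\sigma(\lambda+\rho),\mu}(\xi(z))$, for all $\lambda,\mu\in\Lambda$, $\sigma\in W$. As $\lambda\mapsto\lambda+\rho$ is a bijection of $\Lambda$, this says $\varrho^{\nu,\mu}(\xi(z))=\varrho^{\sigma(\nu),\mu}(\xi(z))$ for all $\nu,\mu\in\Lambda$, which is precisely the hypothesis of Lemma \ref{If in} with $u=\xi(z)$; since $n$ is even this gives $\xi(z)\in(U^0_\flat)^W$, and hence $\operatorname{Im}\xi\subseteq(U^0_\flat)^W$.

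The hard part will be the representation-theoretic input of the second step: verifying cleanly, in the two-parameter conventions, that $f_i^m v$ carries the character $\varrho^{\sigma_i\cdot\lambda,\mu}$ and that the rank-one relation $e_if_i^m v=0$ holds exactly at $m=\langle\lambda+\rho,\alpha_i^\vee\rangle$ with critical exponent independent of $\mu$. This is the only genuinely computational ingredient, and it is essentially the content already recorded in Lemma \ref{irrmod}. A secondary point needing care is the density step, where $\lambda_0$ must be chosen so that the relevant character values are distinct; this is exactly where the non-degeneracy of $R\pm S$ (Propositions \ref{sym} and \ref{InvR+S}, via Lemma \ref{coro of inj}), and thus the parity hypothesis on $n$, enters.
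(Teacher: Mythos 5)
Your proposal follows essentially the same route as the paper's proof: both reduce $\varrho^{\lambda+\rho,\mu}(\xi(z))$ to the scalar $\varrho^{\lambda,\mu}(\pi(z))$ by which $z$ acts on the Verma module $M(\varrho^{\lambda,\mu})$, both exploit the singular vector $f_i^{(\lambda,\alpha_i^\vee)+1}v_{\lambda,\mu}$ together with the key observation that the $\varrho^{0,\mu}$-twist rescales $\omega_i$ and $\omega'_i$ by the same factor $(rs^{-1})^{(\alpha_i,\mu)}$ (so the critical exponent is independent of $\mu$), and both finish by feeding the resulting $W$-invariance into Lemma \ref{If in}. The one place you diverge is the passage from dominant $\lambda$ to all of $\Lambda$: the paper uses the $\rho$-shifted reflection trick (the case $(\lambda,\alpha_i^\vee)=-1$ is trivial since then $\sigma_i(\lambda+\rho)=\lambda+\rho$, and the case $(\lambda,\alpha_i^\vee)<-1$ reduces to the dominant case because $\sigma_i$ is an involution), whereas you appeal to linear independence of characters. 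Your version of that step is repairable but imprecise as written: evaluating only along the ray $\lambda_0,2\lambda_0,\dots$ controls the difference on that ray, not on all of $\Lambda$ --- one should evaluate at $\lambda+m\lambda_0$ for each fixed $\lambda$ and apply Vandermonde in $m$ after grouping terms with equal node $\chi_j(\lambda_0)$ --- and the nodes contributed by $\varrho^{\lambda_0}$ and by $\varrho^{\sigma_i(\lambda_0)}$ cannot all be pairwise distinct (if they were, the argument would force $\pi(z)=0$); the correct conclusion is that each group-sum vanishes, which still yields the identity for all $\lambda$. Neither issue is fatal, but the paper's reflection argument sidesteps them entirely and needs no genericity of $\lambda_0$.
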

\begin{proof}
	Let $z\in Z(U) $ and $ \mu \in \Lambda$, take a $ \lambda \in \Lambda$ such that  $(\lambda,\alpha_i^\vee) \geqslant 0$ for some fixed $i$. Let $v_{\lambda,\mu}$ be the highest weight vector of the Verma module $M(\varrho^{\lambda,\mu})$. Then
	\begin{align*}
		z\cdot v_{\lambda,\mu}
		= \pi(z) \cdot v_{\lambda,\mu}
		= \varrho^{\lambda,\mu}(\pi(z)) v_{\lambda,\mu}
		=\varrho^{\lambda+\rho,\mu}(\xi(z)) v_{\lambda,\mu}.
	\end{align*}
	That is, $z$ acts on $M(\varrho^{\lambda,\mu})$ by scalar $\varrho^{\lambda+\rho,\mu}(\xi(z)).\,$
	On the other hand, by \cite[Corollary 2.6]{BGH06} and \cite[Property 37]{PHR10}, let $[m]_i = \frac{r_i^m - s_i^m}{r_i - s_i}$, then we have
	
	\begin{align*}
		e_i f_i^{(\lambda,\alpha_{i}^\vee) +1 } \cdot v_{\lambda,\mu}
		&= [(\lambda,\alpha_i^\vee )+1]_i \; f_i^{(\lambda,\alpha_i^\vee)} \frac{r_i^{-(\lambda,\alpha_i^\vee) } \omega_i - s_i^{-(\lambda,\alpha_i^\vee ) }\omega'_i}{r_i - s_i} \cdot v_{\lambda,\mu}.
	\end{align*}		
	Notice that
	\begin{align*}
		(r_i^{-(\lambda,\alpha_i^\vee) } \omega_i - s_i^{-(\lambda,\alpha_i^\vee ) }\omega'_i) \cdot v_{\lambda,\mu}
		&= \left( r_i^{-(\lambda,\alpha_i^\vee)} \varrho^{\lambda,\mu} (\omega_i) - s_i^{-(\lambda,\alpha_i^\vee)} \varrho^{\lambda,\mu}(\omega'_i)
		\right) \cdot v_{\lambda,\mu}.
	\end{align*}		
	Since
	$r_i^{-(\lambda,\alpha_i^\vee)} \varrho^{\lambda,0} (\omega_i) = s_i^{-(\lambda,\alpha_i^\vee)} \varrho^{\lambda,0}(\omega'_i)
	,$
	it follows that
	\begin{align*}
		e_j f_i^{(\lambda,\alpha_i^\vee) +1 } \cdot v_{\lambda,\mu} &= 0, \hspace{5cm} j=1,\cdots,n, \\
		z f_i^{(\lambda,\alpha_i^\vee) +1 } \cdot v_{\lambda,\mu}
		&= \pi(z)f_i^{(\lambda,\alpha_i^\vee)+1} \cdot v_{\lambda,\mu} \\
		&= \varrho^{\sigma_i (\lambda + \rho) -\rho,\mu} (\pi (z)) f_i^{(\lambda,\alpha_i^\vee)+1} \cdot v_{\lambda,\mu} \\
		&= \varrho^{\sigma_i (\lambda + \rho),\mu} (\xi (z)) f_i^{(\lambda,\alpha_i^\vee)+1} \cdot v_{\lambda,\mu}, \hspace{1cm} \forall\; z \in Z(U). \\
	\end{align*}
	Hence, $z$ acts on $M(\varrho^{\lambda,\mu})$ by scalar $\varrho^{ \sigma_i(\lambda + \rho),\mu}(\xi(z)).\,$ we have
	\begin{align}\label{equ_rho_xi}
		\varrho^{\lambda+\rho,\mu}  (\xi(z)) = \varrho^{\sigma_i(\lambda+\rho),\mu} (\xi(z)).
	\end{align}
	
	In fact, equation (\ref{equ_rho_xi}) holds for any $\lambda \in \Lambda$.
	This is because if $(\lambda, \alpha_i^\vee)=-1,$ then $\lambda+\rho = \sigma_i(\lambda'+\rho)$ such that (\ref{equ_rho_xi}) holds.
	If $(\lambda, \alpha_i^\vee)<-1,$ let $\lambda' = \sigma_i(\lambda+\rho) - \rho,$ then $(\lambda',\alpha_i^\vee) \geqslant 0$ such that (\ref{equ_rho_xi}) holds for $\lambda'$. Relacing $\lambda'$ with $ \sigma_i(\lambda+\rho) - \rho$ into the result that (\ref{equ_rho_xi}) holds for $\lambda$ in this case.
	Finally, since (\ref{equ_rho_xi}) holds for each $\sigma_i$, so it holds for all $\sigma \in W$, which implies that Im $(\xi) \subseteq (U^0_\flat)^W$, by Lemma \ref{If in}.
\end{proof}

\section{Central elements and the Harish-Chandra theorem}

In this section, we will deal with arbitrary rank.

\subsection{The Harish-Chandra theorem}
In what follows, we aim to prove that the subspace $(U^0_\flat)^W$ is in $\operatorname{Im}(\xi)$.
\begin{lemma} \label{center-ad} Let $z \in U$, then
	$z \in Z(U) $ if and only if $\operatorname{ad}_l (x) z = \varepsilon (x) z,\, \forall\; x \in U. $
\end{lemma}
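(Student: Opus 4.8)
The plan is to verify both implications by direct manipulation of the Hopf algebra axioms, since this is exactly the standard characterization of the centre as the trivial isotypic component of the left adjoint action. Throughout I will write $\Delta(x)=\sum_{(x)}x_{(1)}\otimes x_{(2)}$ in Sweedler notation and use the two structural identities that $U$ satisfies as a Hopf algebra: the counit axiom $\sum_{(x)}\varepsilon(x_{(1)})x_{(2)}=x=\sum_{(x)}x_{(1)}\varepsilon(x_{(2)})$, and the antipode axiom $\sum_{(x)}x_{(1)}S(x_{(2)})=\varepsilon(x)1=\sum_{(x)}S(x_{(1)})x_{(2)}$. No special feature of $U_{r,s}(\mathfrak{g})$ beyond its being a Hopf algebra is needed.

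For the forward implication, assume $z\in Z(U)$. Since $z$ commutes with each $x_{(1)}$, I would simply slide it to the left and apply the antipode axiom:
\begin{align*}
\operatorname{ad}_l(x)(z)=\sum_{(x)}x_{(1)}\,z\,S(x_{(2)})=\sum_{(x)}z\,x_{(1)}S(x_{(2)})=z\,\varepsilon(x)1=\varepsilon(x)z.
\end{align*}
This direction is immediate.

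For the converse the key step is to establish the \emph{recovery identity}
\begin{align*}
\sum_{(x)}\operatorname{ad}_l(x_{(1)})(z)\cdot x_{(2)}=xz,
\end{align*}
valid for \emph{every} $z\in U$. I would prove it by expanding the left-hand side, using coassociativity to regroup $\sum_{(x_{(1)})}(x_{(1)})_{(1)}\otimes(x_{(1)})_{(2)}\otimes x_{(2)}=\sum_{(x)}x_{(1)}\otimes x_{(2)}\otimes x_{(3)}$, so that the expression becomes $\sum_{(x)}x_{(1)}\,z\,S(x_{(2)})x_{(3)}$. Viewing the last two factors as $\Delta$ applied to a single element (by $(\mathrm{id}\otimes\Delta)\circ\Delta=\Delta^{(2)}$), the antipode axiom gives $\sum_{(x)}x_{(1)}\otimes S(x_{(2)})x_{(3)}=x\otimes 1$; applying the linear map $a\otimes c\mapsto a\,z\,c$ to both sides then yields exactly $xz$. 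Granting this identity, the hypothesis $\operatorname{ad}_l(x_{(1)})(z)=\varepsilon(x_{(1)})z$ finishes the argument:
\begin{align*}
xz=\sum_{(x)}\varepsilon(x_{(1)})\,z\,x_{(2)}=z\sum_{(x)}\varepsilon(x_{(1)})x_{(2)}=zx,
\end{align*}
where the final equality is the counit axiom. Hence $z$ is central.

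I do not expect a genuine obstacle: the whole argument is formal and depends only on the Hopf algebra structure furnished by Proposition \ref{Hopf}. The one place demanding care is the bookkeeping in the recovery identity, where coassociativity must be invoked to legitimately regroup $S(x_{(2)})x_{(3)}$ into the comultiplication of a single element before the antipode axiom applies; making the grouping precise via $\Delta^{(2)}=(\mathrm{id}\otimes\Delta)\circ\Delta$ removes any ambiguity.
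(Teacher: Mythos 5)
Your proof is correct, but your treatment of the converse differs from the paper's. The forward direction is identical (slide $z$ past $x_{(1)}$ and apply the antipode axiom). For the converse, the paper does not use any general Hopf-algebraic identity: it simply evaluates the hypothesis on the generators. From $\operatorname{ad}_l(\omega_i)z=\varepsilon(\omega_i)z$ it gets $\omega_i z\omega_i^{-1}=z$, and then the explicit coproducts $\Delta(e_i)=e_i\otimes 1+\omega_i\otimes e_i$ and $\Delta(f_i)=1\otimes f_i+f_i\otimes\omega'_i$ give $0=\operatorname{ad}_l(e_i)z=e_iz-(\omega_i z\omega_i^{-1})e_i=e_iz-ze_i$ and likewise $0=(f_iz-zf_i){\omega'_i}^{-1}$, so $z$ commutes with a generating set. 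Your route instead establishes the recovery identity $\sum_{(x)}\operatorname{ad}_l(x_{(1)})(z)\,x_{(2)}=xz$ via coassociativity and the antipode axiom, and then deduces $xz=zx$ for every $x$ at once; your verification of that identity is sound. What your approach buys is complete generality — it is the standard argument valid in any Hopf algebra, with no reference to the particular generators or coproduct formulas of $U_{r,s}(\mathfrak{g})$. What the paper's approach buys is concreteness and the slightly stronger observation that the adjoint condition needs only to be checked on the generators $\omega_i^{\pm1},{\omega'_i}^{\pm1},e_i,f_i$ to force centrality. Both are complete proofs of the stated equivalence.
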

\begin{proof}
	Suppose $z \in Z(U)$, then for all $x\in U$, we have
	\begin{align*}
		\operatorname{ad}_l (x) z
		= \sum\limits_{(x)} x_{(1)} z S(x_{(2)})
		= z \sum\limits_{(x)} x_{(1)} S(x_{(2)})
		= \varepsilon (x) z.
	\end{align*}
	Conversely, if $\operatorname{ad}_l (x) z =  \varepsilon(x) z$ holds for all $x \in U$, then
	\begin{align*}
		\omega_i z \omega_i^{-1} = \operatorname{ad}_l (w_i) z =  \varepsilon(\omega_i) z = z.
	\end{align*}
	Since for each generator $e_i$ and $f_i$ of $U_{r,s}(\mathfrak{g})$, we have
	\begin{align*}
		0
		&= \varepsilon (e_i) z
		= \operatorname{ad}_l (e_i) z  \\
		&= e_i z + \omega_i z S(e_i)
		= e_i z - (\omega_i z \omega_i^{-1}) e_i
		= e_i z - z e_i,\\
		0
		&= \varepsilon (f_i) z
		= \operatorname{ad}_l (f_i) z \\
		&= zS(f_i) + f_i z S(\omega'_i)
		= (-zf_i +f_i z) {\omega'_i}^{-1}.	
	\end{align*}
	So $z \in Z(U).$
\end{proof}

\begin{lemma} \label{Rooso realize}	
	Given a bilinear form $\Psi: \; U^-_{-\mu} \times U^+_{\upsilon} \rightarrow \mathbb{K}$ and a pair $(\eta,\phi)\in Q \times Q$,
	then there exists an element $u \in U^-_{-\mu} U^0 U^+_{\upsilon}$ such that for any $x \in U^+_\upsilon,\; y \in U^-_{-\mu}$ and $ (\zeta,\psi) \in Q \times Q$,
	\begin{align*}
		\langle u \;|\; (y {\omega'_\mu}^{-1}) \omega'_\zeta \omega_\psi x \rangle = \langle \omega'_\zeta, \omega_\phi \rangle \langle \omega'_\eta,\omega_\psi \rangle \Psi(y,x).
	\end{align*}
\end{lemma}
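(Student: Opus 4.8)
The plan is to build $u$ directly from the four‑fold factorization of the Rosso form in Definition \ref{Rosso}. Writing the second argument as $y\,\omega'_\zeta\omega_\psi\,x$ with $y\in U^-_{-\mu}$ and $x\in U^+_\upsilon$, the definition shows that for a monomial $u=q\,\omega'_\eta\omega_\phi\,p$ with $q\in U^-$ and $p\in U^+$ one has
\begin{align*}
\langle q\,\omega'_\eta\omega_\phi\,p,\ y\,\omega'_\zeta\omega_\psi\,x\rangle_U
=\langle\omega'_\zeta,\omega_\phi\rangle\,\langle\omega'_\eta,\omega_\psi\rangle\,\langle y,p\rangle\,\langle S^2(q),x\rangle.
\end{align*}
Thus the two ``diagonal'' factors $\langle\omega'_\zeta,\omega_\phi\rangle\langle\omega'_\eta,\omega_\psi\rangle$ appearing in the claimed identity are produced automatically as soon as the $U^0$‑part of every term of $u$ is taken to be $\omega'_\eta\omega_\phi$. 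Hence the whole problem reduces to arranging the residual factor $\langle y,p\rangle\,\langle S^2(q),x\rangle$ so that, after summation, it reproduces the prescribed bilinear form $\Psi(y,x)$.

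First I would pin down the gradings. By Theorem \ref{nondegen for grading} the pairing $\langle y,p\rangle$ can be nonzero for $y\in U^-_{-\mu}$ only when $p\in U^+_{\mu}$, and $\langle S^2(q),x\rangle$ can be nonzero for $x\in U^+_\upsilon$ only when $S^2(q)\in U^-_{-\upsilon}$; since $S^2$ is the square of the antipode, it is a $Q$‑graded algebra automorphism, so this forces $q\in U^-_{-\upsilon}$. Consequently the surviving monomials of $u$ are exactly those in $U^-_{-\upsilon}\,U^0\,U^+_{\mu}$ (the placement of the $U^{\pm}$‑factors being dictated by the grading of the skew‑pairing), while every other graded contribution to the Rosso form vanishes.

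Next I choose a basis $\{p_1,\dots,p_{d_\mu}\}$ of $U^+_{\mu}$ and a basis $\{q_1,\dots,q_{d_\upsilon}\}$ of $U^-_{-\upsilon}$, and set $u=\sum_{i,j}c_{ij}\,q_j\,\omega'_\eta\omega_\phi\,p_i$ with scalars $c_{ij}$ to be determined. The identity above yields
\begin{align*}
\langle u,\ y\,\omega'_\zeta\omega_\psi\,x\rangle_U
=\langle\omega'_\zeta,\omega_\phi\rangle\,\langle\omega'_\eta,\omega_\psi\rangle\sum_{i,j}c_{ij}\,\langle y,p_i\rangle\,\langle S^2(q_j),x\rangle,
\end{align*}
so it remains to solve $\sum_{i,j}c_{ij}\langle y,p_i\rangle\langle S^2(q_j),x\rangle=\Psi(y,x)$. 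By the nondegeneracy in Theorem \ref{nondegen for grading} the functionals $y\mapsto\langle y,p_i\rangle$ form a basis of $(U^-_{-\mu})^{*}$, and since $S^2$ restricts to a bijection of $U^-_{-\upsilon}$ the functionals $x\mapsto\langle S^2(q_j),x\rangle$ form a basis of $(U^+_\upsilon)^{*}$. Their tensor products therefore form a basis of the space of bilinear forms on $U^-_{-\mu}\times U^+_\upsilon$, whose dimension $d_\mu\,d_\upsilon$ equals the number of unknowns $c_{ij}$; hence the $c_{ij}$ realizing $\Psi$ exist and are uniquely determined, producing the required $u$.

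I expect the only delicate point to be the grading analysis of the second step: one must check that no cross‑terms with mismatched $Q$‑degrees contribute to $\langle u,\ y\,\omega'_\zeta\omega_\psi\,x\rangle_U$, which is precisely what the nondegeneracy‑on‑gradings statement ensures, and one must treat the operator $S^2$ not as an obstruction but as an invertible map on each graded piece $U^-_{-\upsilon}$, so that the functionals $\langle S^2(q_j),-\rangle$ still span. Everything else is bilinearity of the Rosso form together with the dimension count above.
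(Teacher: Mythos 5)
Your proposal is correct and follows essentially the same route as the paper: build $u$ as a sum of monomials $q_j\,\omega'_\eta\omega_\phi\,p_i$ over bases of the appropriate graded pieces (you rightly observe these must lie in $U^-_{-\upsilon}U^0U^+_{\mu}$) and use the nondegeneracy of the graded skew-pairing to determine the coefficients from $\Psi$. The only difference is that the paper makes the coefficients explicit --- taking $\{v^\nu_i\}$ dual to $\{u^\nu_i\}$ and using that $S^2$ acts on $U^-_{-\nu}$ by the scalar $(rs^{-1})^{\frac{2}{\ell}(\rho,\nu)}$, it sets the coefficient equal to $(rs^{-1})^{-\frac{2}{\ell}(\rho,\nu)}\Psi(v^\mu_j,u^\nu_i)$ --- whereas you solve the same linear system abstractly by a dimension count.
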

\begin{proof}
	Let $\mu \in Q^+$ and $\{u^\mu_1,\cdots,u^\mu_{d_\mu}\}$ be a basis of $U^+_\mu$, then take a dual basis $\{ v^\mu_1,\cdots,v^\mu_{d_\mu} \}$  in $U^-_{-\mu}$ with respect to $\langle-,- \rangle$. Take that
	\begin{align*}
		u = \sum\limits_{i,j} (rs^{-1})^{-(\rho,\nu)} \hspace{0.1cm} \Psi(v^\mu_j, u^\nu_i) \hspace{0.1cm} (v^\nu_i {\omega'_\nu}^{-1}) \omega'_{\eta} \omega_\phi  u^\mu_j ,
	\end{align*}
	then $u$ satisfies the identity in the lemma.
\end{proof}

\begin{defi}
	$(1)$ Define a  $U$-module structure on $U^*$ by $$(x\cdot f)(v) = f(\operatorname{ad}_l(S(x)) v),\quad \forall\; x\in U,\; f\in U^*.$$ Then we define a morphism $\beta:\; U \longrightarrow U^*, \; u \mapsto \langle u \;|\; - \rangle.\,$ It follows that $\beta$ is an injective morphism of $U$-module since Rosso form is nondegenerate and ad-invariant.
	
	$(2)$  Let $M$ be a finite-dimensional $U$-module. For each $m\in M,\; f\in M^*,$ define the matrix coefficient
	by $C_{f,m}\in U^*,\; C_{f,m}(v) = f(v\cdot m), \,\forall \; v \in U.$
\end{defi}

\begin{proposition} \label{fdmod matrix coefficients}
	Let $M$ be a finite-dimensional $U$-module. Decompose $M=\bigoplus_{\lambda \in \Pi(M)} M_\lambda$, where
	\begin{align*}
		M_\lambda = \{m\in M \mid \omega_i \cdot m = \varrho^\lambda(\omega_i) m ,\ \omega'_i \cdot m = \varrho^\lambda(\omega'_i)m \}.
	\end{align*}
	If the weight set of $M$ satisfies $\operatorname{Wt}(M)\subseteq Q$, then for $f\in M^*,\; m\in M,$ there exists a unique $u\in U$ such that
	\begin{align*}
		C_{f,m}(v) =\langle u \;|\; v\rangle,\quad \forall\; v \in U.
	\end{align*}	
\end{proposition}
\begin{proof}
	We start by proving the existence of $u$. Since $C_{f,m}$ is linear with respect to $m \in M$, it is sufficient to check the case of $m \in M_\lambda$ for each $\lambda$. Suppose $v$ is a monomial $v = (y {\omega'_\mu}^{-1})\omega'_\eta \omega_\phi x,$ where $x \in U^+_\nu,\; y\in U^-_{-\mu}$, then for each $f \in M^*$, we have
	\begin{align*}
		C_{f,m}(v)
		&=C_{f,m}((y {\omega'_\mu}^{-1})\omega'_\eta \omega_\phi x)\\
		&=\langle \omega'_\eta , \omega_{-(\lambda+\nu)} \rangle \langle \omega'_{\lambda+\nu} , \omega_\phi \rangle f((y {\omega'_\mu}^{-1}).x.m).
	\end{align*}
	Since $\Psi:  (y,x)\mapsto f((y {\omega'_\mu}^{-1}).x.m)$ is a bilinear form, by Lemma \ref{Rooso realize}, there exists a unique $u_{\nu\mu}$ such that for all $v \in U^-_{-\mu} U^0 U^+_{\nu},$  $C_{f,m}(v) = \langle u_{\nu\mu} \;|\; v \rangle$ holds.
	
	More generally, let $v \in U$ with $v = \sum_{(\mu,\nu)} v_{\mu\nu}$, where $v_{\mu\nu} \in U^-_{-\mu} U^0 U^+_{\nu}$. Since $M$ is finite-dimensional, there exists a finite set $\Omega$ of pairs $(\mu,\nu) \in Q\times Q$, such that
	\begin{align*}
		C_{f,m} (v) = C_{f,m} \Bigl(\sum\limits_{(\mu,\nu)} v_{\mu\nu} \Bigr),\quad \forall\; \nu \in U.
	\end{align*}
	Let $u = \sum_{(\mu,\nu) \in \Omega} u_{\nu\mu}$, then
	\begin{align*}
		\langle u \;|\; v \rangle
		&= \sum\limits_{(\mu,\nu),(\mu',\nu') \in \Omega} \langle u_{\nu'\mu'} \;|\; v_{\mu\nu} \rangle \\
		&= \sum\limits_{(\mu,\nu) \in \Omega} \langle u_{\nu\mu} \;|\; v_{\mu\nu} \rangle \\
		&= \sum\limits_{(\mu,\nu) \in \Omega} \langle u_{\nu\mu} \;|\; v \rangle
		= C_{f,m} (v).
	\end{align*}
	Here the second row of the equations holds by Lemma \ref{nondegen for grading}.
\end{proof}

\begin{lemma}
	Let $(M,\zeta)$ be a weight module, where $\zeta: U \longrightarrow \operatorname{End}(M)$  and define a linear map $\Theta:\;M\longrightarrow M$ by
	\begin{align*}
		m\mapsto (rs^{-1})^{-(\rho,\lambda)} m, \quad \forall\; m\in M_\lambda, \; \lambda \in \Lambda.
	\end{align*}
	Then for all $u\in U,$ $ \Theta \circ \zeta(u)=\zeta(S^2(u)) \circ \Theta.$ That is,
	\begin{align*}
		\Theta(u.m) = S^2(u).\Theta(m), \quad \forall\; m\in M.
	\end{align*}
\end{lemma}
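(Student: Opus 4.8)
The plan is to exploit the fact that both sides of the desired identity behave multiplicatively in $u$, so that it is enough to verify it on the algebra generators $e_i, f_i, \omega_i^{\pm 1}, {\omega'_i}^{\pm 1}$. Concretely, I would let $T$ denote the set of all $u \in U$ for which $\Theta \circ \zeta(u) = \zeta(S^2(u)) \circ \Theta$ holds as operators on $M$. This set is visibly a linear subspace, and it is closed under multiplication: if $u, v \in T$, then using $S^2(uv) = S^2(u)S^2(v)$ (recall $S$ is an algebra anti-homomorphism, so $S^2$ is an algebra homomorphism) one computes
\begin{align*}
\Theta\,\zeta(uv) = \Theta\,\zeta(u)\zeta(v) = \zeta(S^2(u))\,\Theta\,\zeta(v) = \zeta(S^2(u))\zeta(S^2(v))\,\Theta = \zeta(S^2(uv))\,\Theta.
\end{align*}
Hence $T$ is a subalgebra of $U$, and it suffices to show that all generators lie in $T$; since the $M_\lambda$ span $M$, the resulting operator identity then holds on all of $M$.

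Next I would record the action of $S^2$ on the generators using Proposition \ref{Hopf} together with relation $(X2)$. For the Cartan part, $S(\omega_i^{\pm 1}) = \omega_i^{\mp 1}$ gives $S^2(\omega_i^{\pm 1}) = \omega_i^{\pm 1}$, and likewise $S^2({\omega'_i}^{\pm 1}) = {\omega'_i}^{\pm 1}$. For $e_i$ one has
\begin{align*}
S^2(e_i) = S(-\omega_i^{-1}e_i) = -S(e_i)S(\omega_i^{-1}) = \omega_i^{-1} e_i \omega_i = a_{ii}^{-1} e_i,
\end{align*}
and symmetrically $S^2(f_i) = {\omega'_i} f_i {\omega'_i}^{-1} = a_{ii} f_i$, where $a_{ii} = (rs^{-1})^{(\alpha_i,\alpha_i)/\ell}$ follows from $R_{ii} = \langle i,i\rangle = d_i$ and $S_{ii} = -d_i$.

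Finally I would check the intertwining identity generator by generator on a weight vector $m \in M_\lambda$. For the Cartan generators both $\Theta$ and $\zeta(\omega_i)$, $\zeta({\omega'_i})$ act by scalars on $M_\lambda$, so they commute, and since $S^2$ fixes them the identity is immediate. For $e_i$, the $Q$-grading gives $\zeta(e_i) M_\lambda \subseteq M_{\lambda+\alpha_i}$, so comparing
\begin{align*}
\Theta\bigl(\zeta(e_i)m\bigr) = (rs^{-1})^{-\frac{2}{\ell}(\rho,\lambda+\alpha_i)}\,\zeta(e_i)m
\end{align*}
with
\begin{align*}
\zeta(S^2(e_i))\,\Theta(m) = a_{ii}^{-1}(rs^{-1})^{-\frac{2}{\ell}(\rho,\lambda)}\,\zeta(e_i)m
\end{align*}
reduces the claim to the scalar identity $(rs^{-1})^{-\frac{2}{\ell}(\rho,\alpha_i)} = a_{ii}^{-1}$. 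The analogous computation for $f_i$ (with weight shift $-\alpha_i$) reduces to $(rs^{-1})^{\frac{2}{\ell}(\rho,\alpha_i)} = a_{ii}$.

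The only real content — and the point I would treat most carefully — is this scalar matching. It holds because $a_{ii} = (rs^{-1})^{(\alpha_i,\alpha_i)/\ell}$, while the standard property $(\rho,\alpha_i^\vee) = 1$ gives $(\rho,\alpha_i) = \tfrac12(\alpha_i,\alpha_i)$, so that $(rs^{-1})^{\frac{2}{\ell}(\rho,\alpha_i)} = (rs^{-1})^{(\alpha_i,\alpha_i)/\ell} = a_{ii}$. Everything else is formal: once the generators are shown to lie in $T$, the subalgebra argument of the first step upgrades the identity to all of $U$, completing the proof.
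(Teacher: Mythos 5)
Your proposal is correct and follows essentially the same route as the paper: reduce to the generators via the multiplicativity of both sides, compute $S^2(e_i)=a_{ii}^{-1}e_i$ and $S^2(f_i)=a_{ii}f_i$, and match scalars on weight spaces using $a_{ii}=(rs^{-1})^{(\alpha_i,\alpha_i)/\ell}=(rs^{-1})^{\frac{2}{\ell}(\rho,\alpha_i)}$, which is exactly the identity the paper isolates. You merely spell out the subalgebra argument and the (trivial) Cartan generators more explicitly than the paper does.
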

\begin{proof}
	It is sufficient to check it for the generators $e_i , f_i$. Notice that
	\begin{align*}
		\langle \omega'_i, \omega_i \rangle &= a_{ii} = (rs^{-1})^{d_i}=(rs^{-1})^{\frac{(\alpha_i,\alpha_i)}{2}}\\
		&=(rs^{-1})^{ (\varpi_i,\alpha_i) } =  (rs^{-1})^{(\rho,\alpha_{i})}\;,
	\end{align*}
	which yields for any $m \in M_\lambda,$
	\begin{align*}
		S^2(e_i). \Theta(m)
		&= (rs^{-1})^{-(\rho,\lambda)} (S^2(e_i).m) \\
		&= (rs^{-1})^{-(\rho,\lambda)} \langle \omega'_i, \omega_i \rangle^{-1} e_i . m \\
		&= (rs^{-1})^{-(\rho,\lambda+\alpha_i)} e_i . m  =  \Theta(e_i . m),\\
		S^2(f_i). \Theta(m)
		&= S^2 (f_i) (rs^{-1})^{-(\rho,\lambda)}.m \\
		&=\langle \omega'_i, \omega_i \rangle (rs^{-1})^{-(\rho,\lambda)} f_i . m \\
		&=(rs^{-1})^{-(\rho,\lambda-\alpha_i)} f_i . m = \Theta(f_i . m).
	\end{align*}
	
	This completes the proof.
\end{proof}

\begin{proposition} \label{Construct a central element}
	For $\lambda \in \Lambda^+,$ define a quantum trace $t_\lambda \in U^*$ as $t_\lambda (u) := \operatorname{tr}_{L(\lambda)} (u \Theta).\,$ If $\lambda \in \Lambda^+ \cap Q,$ then $t_\lambda \in \operatorname{Im} (\beta),$ and $z_\lambda := \beta^{-1}(t_\lambda) \in Z(U).$
\end{proposition}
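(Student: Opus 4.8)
The plan is to prove the two assertions separately, reducing each to a tool already established in the excerpt. For the membership $t_\lambda \in \operatorname{Im}(\beta)$, I would first exhibit $t_\lambda$ as a finite sum of matrix coefficients of $L(\lambda)$. Choosing a weight basis $\{m_i\}$ of $L(\lambda)$ with dual basis $\{f_i\} \subset L(\lambda)^*$, the defining formula gives
\[
t_\lambda(v) = \operatorname{tr}_{L(\lambda)}(v\Theta) = \sum_i f_i\big(v\cdot\Theta(m_i)\big) = \sum_i C_{f_i,\Theta(m_i)}(v),
\]
so that $t_\lambda = \sum_i C_{f_i,\Theta(m_i)}$. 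The crucial point is that the hypothesis $\lambda \in \Lambda^+ \cap Q$ forces $\operatorname{Wt}(L(\lambda)) \subseteq \lambda + Q \subseteq Q$, which is precisely the weight condition required by Theorem \ref{fdmod matrix coefficients}. Applying that theorem to each matrix coefficient yields elements $u_i \in U$ with $C_{f_i,\Theta(m_i)} = \beta(u_i)$, and then $z_\lambda := \beta^{-1}(t_\lambda) = \sum_i u_i$ realizes $t_\lambda$ inside $\operatorname{Im}(\beta)$. I emphasize that the role of $\lambda\in Q$ is exactly to keep the weights of the finite-dimensional module in $Q$, where the Rosso form is visible.

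For centrality I would pass through Lemma \ref{center-ad} together with the fact that $\beta$ is an \emph{injective morphism of $U$-modules}. Since the Rosso form is $\operatorname{ad}_l$-invariant (Theorem \ref{inv}), $\beta$ intertwines $\operatorname{ad}_l$ on $U$ with the module structure on $U^*$, i.e. $\beta(\operatorname{ad}_l(x)u) = x\cdot\beta(u)$. Hence, by injectivity of $\beta$, the criterion $\operatorname{ad}_l(x)z_\lambda = \varepsilon(x)z_\lambda$ of Lemma \ref{center-ad} is equivalent to the single identity
\[
x\cdot t_\lambda = \varepsilon(x)\, t_\lambda, \qquad \forall\, x \in U,
\]
that is, to $t_\lambda\big(\operatorname{ad}_l(S(x))v\big) = \varepsilon(x)\, t_\lambda(v)$ for all $x,v$.

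The heart of the matter is verifying this invariance of the quantum trace, and this is where I expect the main technical care to lie. Writing $y = S(x)$ and expanding $\operatorname{ad}_l(y)(v) = \sum_{(y)} y_{(1)} v S(y_{(2)})$, I would compute
\[
t_\lambda\big(\operatorname{ad}_l(y)v\big) = \sum_{(y)} \operatorname{tr}_{L(\lambda)}\big(y_{(1)}\, v\, S(y_{(2)})\,\Theta\big).
\]
The preceding lemma, which says $\Theta$ implements $S^2$ (equivalently $u\Theta = \Theta\,S^{-2}(u)$ on $L(\lambda)$), lets me slide $S(y_{(2)})\Theta = \Theta\,S^{-1}(y_{(2)})$ past $\Theta$; combining this with cyclicity of the ordinary trace collapses the expression to $\sum_{(y)} \operatorname{tr}_{L(\lambda)}\big(S^{-1}(y_{(2)})\,y_{(1)}\, v\,\Theta\big)$. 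The final step is the antipode identity $\sum_{(y)} S^{-1}(y_{(2)})\,y_{(1)} = \varepsilon(y)\,1$, valid because $S^{-1}$ is the antipode of the co-opposite Hopf algebra; this yields $t_\lambda(\operatorname{ad}_l(y)v) = \varepsilon(y)\,t_\lambda(v)$, and since $\varepsilon(S(x)) = \varepsilon(x)$ we obtain $x\cdot t_\lambda = \varepsilon(x)t_\lambda$, whence $z_\lambda \in Z(U)$. The main obstacle is precisely the bookkeeping of the $S$-, $S^2$- and $S^{-1}$-twists so that cyclicity of the trace applies and the correct antipode identity appears; the invertibility of $S$, implicit in the $S^2$ already present in the Rosso form, is what makes these manipulations legitimate.
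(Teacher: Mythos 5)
Your proposal is correct and follows essentially the same route as the paper: part (1) realizes $t_\lambda$ as a sum of matrix coefficients $C_{f_i,\Theta(m_i)}$ and invokes the realization theorem (which is exactly where $\lambda\in Q$ is needed so that $\operatorname{Wt}(L(\lambda))\subseteq Q$), and part (2) reduces centrality to $x\cdot t_\lambda=\varepsilon(x)t_\lambda$ via the $\operatorname{ad}_l$-equivariance and injectivity of $\beta$ together with Lemma \ref{center-ad}. Your trace computation differs only cosmetically from the paper's (you commute $\Theta$ past $S(y_{(2)})$ and use $\sum S^{-1}(y_{(2)})y_{(1)}=\varepsilon(y)1$, while the paper cycles $x_{(1)}$ to the back and uses $\sum S(x_{(1)})x_{(2)}=\varepsilon(x)1$ after factoring out $S$); both are valid and equivalent.
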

\begin{proof}
	(1) Since $\beta(u) = \langle u \;|\; - \rangle$  is injective and $t_\lambda = \operatorname{tr}_{L(\lambda)} (-\circ \Theta) \in U^*,$ denote the dimension of $L(\lambda)$ by $d$, take a basis  $\{ m_i\}^{d}_{i=1}$ in $L(\lambda)$ and its dual basis $\{f_i\}^d_{i=1}$
	in $L(\lambda)^*$, we have
	\begin{gather*}
		v.\Theta(m_i) = \sum\limits_{j=1}^{d} f_j(v. \Theta (m_i))m_j = \sum\limits_{j=1}^{d} C_{f_j,\Theta(m_i)} (v) m_j ,\\
		t_\lambda (v) = \operatorname{tr}_{L(\lambda)} (v \circ \Theta) = \sum\limits_{i=1}^{d} C_{f_i,\Theta(m_i)} (v) .
	\end{gather*}
	By Theorem \ref{Rooso realize}, for each $i=1,\cdots,d,$ there exists $u_i \in U$ that realizes the matrix coefficient by $\langle u_i \;|\; v \rangle = C_{f_i,\Theta(m_i)} (v)$. Write $u = \sum_{i=1}^{d} u_i,$ then
	\begin{align*}
		\beta(u)(v) &=\langle u \;|\; v \rangle = \sum\limits_{i=1}^{d} \langle u_i \;|\; v\rangle \\
		&=\sum\limits_{i=1}^{d} C_{f_i,\Theta(m_i)} (v) = t_\lambda (v).
	\end{align*}
	It follows that $t_\lambda \in \operatorname{Im} (\beta)$.
	
	(2) Since $U^*$ has a $U$-module structure $(x\cdot f)(v) = f(\operatorname{ad}_l(S(x)) v),\, \forall\; x\in U,\; f\in U^*,\,$ then for any $x,u \in U,$ we have
	\begin{align*}
		(S^{-1}(x).t_\lambda) (u)
		&= t_\lambda (\operatorname{ad}_l (x) u)
		= \operatorname{tr}_{L(\lambda)} \big(\sum_{(x)} x_{(1)} u S(x_{(2)}) \Theta \big) \\
		&= \operatorname{tr}_{L(\lambda)} \big(u \sum_{(x)} S(x_{(2)}) \Theta x_{(1)} \big)
		= \operatorname{tr}_{L(\lambda)} \big(u \sum_{(x)} S(x_{(2)}) S^2(x_{(1)}) \Theta  \big) \\	
		&= \operatorname{tr}_{L(\lambda)} \big(u S \big(\sum_{(x)} S(x_{(1)}) x_{(2)} \big)\Theta  \big)
		= \varepsilon (x) \operatorname{tr}_{L(\lambda)} (u\Theta) \\
		&= \varepsilon (x) t_\lambda (u).
	\end{align*}
	Substituting $S(x)$ for $x$ above, we get $x.t_\lambda = \varepsilon (x) t_\lambda,\, \forall \; x \in U.$ Notice the fact $t_\lambda \in \operatorname{Im} \beta,$ we define  $z_\lambda := \beta^{-1}(t_\lambda),$ then
	\begin{equation*}
		\begin{split}
			&x.t_\lambda = x.\beta(z_\lambda)
			= (x. \beta)(z_\lambda)
			= \beta(\operatorname{ad}_l (x) (z_\lambda)), \\
			&\varepsilon (x) t_\lambda  = \varepsilon(x) \beta (z_\lambda)
			= \beta (\varepsilon(x) z_\lambda).
		\end{split}
	\end{equation*}
	Since $\beta$ is injective, $\operatorname{ad}_l (x) (z_\lambda) = \varepsilon(x) z_\lambda,\, \forall\; x \in U.$ Therefore, by Lemma \ref{center-ad}, it follows that $z_\lambda \in Z(U)$.
\end{proof}

As described in Theorem \ref{fdmod matrix coefficients}, for each $\lambda \in \Lambda$, there is the unique simple weight module $L(\lambda)$ with the weight space decomposition $\bigoplus_{\tau \leqslant \lambda} L(\lambda)_\tau.$ Define $d_\tau = \operatorname{dim} \hspace{0.1cm} L(\lambda)_\tau$ and set a basis $\{m_i^\tau\}_{i=1}^{d_\tau}$ for this weight space. Then $L(\lambda)$ has a basis $\{m_i^\tau\}_{\tau,i}$ and $L(\lambda)^*$ has the canonical dual basis $\{f_\tau^i\}_{\tau,i}.$ Since $L(\lambda)$ has finite dimension, there exists a finite set $\Omega \subseteq Q \times Q$ such that for any $v \in U = U^- U^0 U^+, m \in L(\lambda)$, we have $v\cdot m = \sum_{(\mu,\nu) \in \Omega} v_{\mu,\nu} \cdot m$, where $ v_{\mu,\nu} \in U^-_{-\mu} U^0 U^+_\nu$.

\begin{theorem} \label{Expressions of central elements}
	For each $\lambda \in \Lambda^+ \bigcap Q$, the central element $z_\lambda$ is
	\begin{align*}
		z_\lambda = \sum_{\tau \leqslant \lambda}
		\sum_{\mu \in Q^+}
		\sum_{i,j}  (rs^{-1})^{- (\rho,\tau+\mu)}
		\langle {\omega'_\mu}, \omega_{\tau+\mu} \rangle
		tr(v^\mu_j u^\mu_i \circ P_\tau) \hspace{0.1cm}
		v^\mu_i \omega'_{\tau} \omega^{-1}_{\tau+\mu} u^\mu_j.
	\end{align*}
	where $\{u^\mu_j\}_{j=1}^{d_\mu}$ is a basis of $U^+_\mu$ and  $\{v^\mu_i\}_{i=1}^{d_\mu}$ is the dual basis of $U^-_{-\mu}$ with respect to the restriction of $\langle-,- \rangle$ to $U^-_{-\mu} \times U^+_{\mu}$, and $P_\tau$ is the projector from $L(\lambda)$ to $L(\lambda)_\tau.$
\end{theorem}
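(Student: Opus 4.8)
The plan is to verify the explicit formula directly by pairing against the Rosso form. Since $z_\lambda = \beta^{-1}(t_\lambda)$ with $\beta(u) = \langle u, -\rangle_U$ injective (Proposition \ref{Construct a central element}), it suffices to show that the element $u$ displayed on the right-hand side satisfies $\langle u, v\rangle_U = t_\lambda(v) = \operatorname{tr}_{L(\lambda)}(v\Theta)$ for every $v\in U$; then injectivity of $\beta$ forces $u = z_\lambda$. First I would reduce both sides to matched graded pieces. By the orthogonality of the Rosso form with respect to the $Q$-grading, each summand $v^\mu_i \omega'_{\tau+\mu}\omega^{-1}_{\tau+\mu}u^\mu_j \in U^-_{-\mu}U^0 U^+_\mu$ pairs nontrivially only with the component of $v$ lying in $U^-_{-\mu}U^0U^+_\mu$; symmetrically, writing $v = y\omega'_\zeta\omega_\psi x$ with $x\in U^+_\nu,\ y\in U^-_{-\mu'}$, the operator $v\Theta$ changes weight by $\nu-\mu'$, so it contributes to the trace only when $\mu'=\nu$. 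Both reductions single out the diagonal degree $\mu$, so it is enough to treat a fixed $v = y\omega'_\zeta\omega_\psi x$ with $x\in U^+_\mu,\ y\in U^-_{-\mu}$.

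The heart of the argument is to evaluate the trace as a sum of matrix coefficients and to recognise the resulting scalars. Choosing a weight basis $\{m^\tau_i\}$ of $L(\lambda)=\bigoplus_{\tau\le\lambda}L(\lambda)_\tau$ with dual basis $\{f^i_\tau\}$, and using $\Theta m^\tau_i = (rs^{-1})^{-\frac{2}{\ell}(\rho,\tau)}m^\tau_i$ together with linearity of $C_{f,m}$ in $m$, I would write
\[
t_\lambda(v) = \sum_{\tau\le\lambda}\sum_i (rs^{-1})^{-\frac{2}{\ell}(\rho,\tau)}\,C_{f^i_\tau, m^\tau_i}(v).
\]
The computation in the proof of Theorem \ref{fdmod matrix coefficients} gives, for $m^\tau_i$ of weight $\tau$ and $x\in U^+_\mu$,
\[
C_{f^i_\tau, m^\tau_i}(v) = \langle \omega'_\zeta, \omega_{-(\tau+\mu)}\rangle\,\langle \omega'_{\tau+\mu}, \omega_\psi\rangle\, f^i_\tau(y.x.m^\tau_i),
\]
and summing over $i$ turns $\sum_i f^i_\tau(y.x.m^\tau_i)$ into $\operatorname{tr}(yx\circ P_\tau)$, which is nonzero precisely when $yx$ preserves the weight $\tau$. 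Thus each weight $\tau$ contributes the scalar $\langle\omega'_\zeta,\omega_{-(\tau+\mu)}\rangle\langle\omega'_{\tau+\mu},\omega_\psi\rangle$ together with the bilinear form $\Psi_\tau(y,x) := \operatorname{tr}(yx\circ P_\tau)$ on $U^-_{-\mu}\times U^+_\mu$.

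Finally I would invoke Lemma \ref{Rooso realize} term by term. Comparing the scalar above with the output $\langle\omega'_\zeta,\omega_\phi\rangle\langle\omega'_\eta,\omega_\psi\rangle\Psi(y,x)$ of that lemma identifies the correct character as $(\eta,\phi) = (\tau+\mu, -(\tau+\mu))$, so that $\omega'_\eta\omega_\phi = \omega'_{\tau+\mu}\omega^{-1}_{\tau+\mu}$. The lemma then produces, for each pair $(\tau,\mu)$, an element realising $\Psi_\tau$ with this character; multiplying it by the $\Theta$-scalar $(rs^{-1})^{-\frac{2}{\ell}(\rho,\tau)}$ combines with the intrinsic factor $(rs^{-1})^{-\frac{2}{\ell}(\rho,\mu)}$ of the lemma into the advertised $(rs^{-1})^{-\frac{2}{\ell}(\rho,\tau+\mu)}$, and reproduces the coefficient $\operatorname{tr}(v^\mu_j u^\mu_i\circ P_\tau)$ attached to $v^\mu_i\omega'_{\tau+\mu}\omega^{-1}_{\tau+\mu}u^\mu_j$. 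Summing over all $\tau\le\lambda$ and $\mu\in Q^+$ (a finite sum, since $P_\tau=0$ off $\operatorname{Wt}(L(\lambda))$ and $\operatorname{tr}(yx\circ P_\tau)$ vanishes unless $\tau+\mu\in\operatorname{Wt}(L(\lambda))$) then yields exactly the claimed $u$, with $\langle u,v\rangle_U = t_\lambda(v)$ by construction. I expect the main obstacle to be not any single estimate but the disciplined bookkeeping of weights: one must keep separate the weight $\tau$ of $m^\tau_i$, the degree $\mu$ of the raising part $x$, and the resulting weight $\tau+\mu$ entering the torus factor, and verify both that the two $(rs^{-1})$-powers assemble correctly and that the grading constraints force $\mu'=\nu=\mu$ simultaneously on the Rosso-form side and the trace side.
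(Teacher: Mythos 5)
Your proposal is correct and follows essentially the same route as the paper: decompose the quantum trace $t_\lambda$ into matrix coefficients $C_{f^i_\tau,\Theta(m^\tau_i)}$, reduce to the diagonal graded pieces, and realize each piece via Lemma \ref{Rooso realize} with the character forced to be $(\tau+\mu,-(\tau+\mu))$. The only difference is cosmetic bookkeeping — you extract the $\Theta$-eigenvalue and sum over the basis index to form $\operatorname{tr}(yx\circ P_\tau)$ before invoking the realization lemma, whereas the paper applies the lemma to each $f^l_\tau(yx\cdot\Theta(m^\tau_l))$ individually and collapses the sum into $\operatorname{tr}(v^\mu_j u^\mu_i\circ P_\tau)$ only at the final step.
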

\begin{proof}
	We have shown that for any $v\in U$,
	\[
	\operatorname{tr}_{L(\lambda)} (v \circ \Theta) = \sum_{\tau,l} f_\tau^l(v\cdot \Theta(m^\tau_l)) = \sum_{\tau,l} C_{f_\tau^l,\Theta(m_l^\tau)}(v).
	\]
	
	Firstly, we restrict $v$ to any graded space $U^-_{-\mu} U^0 U^+_{\nu}$ and take a monomial $v = (y {\omega'_\mu}^{-1}) \omega'_\eta \omega_\phi x,$ $x\in U^+_\nu, y\in U^-_{-\mu},$ then by Theorem \ref{fdmod matrix coefficients}, we have
	\begin{align*}
		C_{f_\tau^l,\Theta(m_l^\tau)}(v)
		&=f^l_\tau ( (y {\omega'_\mu}^{-1}) \omega'_\eta \omega_\phi x\cdot \Theta(m_l^\tau)) \\
		&=\langle \omega'_\eta , \omega_{-(\tau+\nu)} \rangle \langle \omega'_{\tau+\nu} , \omega_\phi \rangle f^l_\tau ((y {\omega'_\mu}^{-1})x \cdot \Theta(m^\tau_l)).
	\end{align*}
	Then by Lemma \ref{Rooso realize}, put $\Psi(y,x) = f^l_\tau((y {\omega'_\mu}^{-1})x \cdot \Theta(m^\tau_i))$, then we get an element
	\begin{align*}
		z^{(\tau,l)}_{\nu,\mu}
		&= \sum_{i,j} (rs^{-1})^{- (\rho,\nu)}
		\Psi(v_j^\mu,u_i^\nu) \hspace{0.1cm}
		v^\nu_i \omega'_{\tau} \omega_{-(\tau+\nu)} u^\mu_j\\
		&= \sum_{i,j} (rs^{-1})^{- (\rho,\tau+\nu)}
		f^l_\tau(v^\mu_j {\omega'_\mu}^{-1} u^\nu_i . m^\tau_l) \hspace{0.1cm}
		v^\nu_i \omega'_{\tau} \omega_{\tau+\nu}^{-1} u^\mu_j,
	\end{align*}
	so that $\langle z^{(\tau,l)}_{\nu,\mu} \;|\; v \rangle = C_{f_\tau^i,\Theta(m_i^\tau)}(v)$, for any $v \in U^-_{-\mu} U^0 U^+_{\nu}$.
	
	Further, we add up all the elements labeled by the finite set $\Omega$ and write $z^{(\tau,l)} = $ $ \sum_{(\nu,\mu) \in \Omega} $ $ z^{(\tau,l)}_{\nu,\mu}$, then
	\begin{align*}
		\langle z^{(\tau,l)} \;|\; v \rangle = C_{f_\tau^l,\Theta(m_l^\tau)}(v), \quad \forall\; v\in U.
	\end{align*}
	Finally, since $z_\lambda = \sum_{\tau,l} z^{(\tau,l)},$ we get the expression
	\begin{align*}
		z_\lambda = \sum_{\tau \leqslant \lambda}
		\sum_{l=1}^{d^\tau}
		\sum_{(\nu,\mu) \in \Omega}
		\sum_{(i,j)=(1,1)}^{(d_\nu,d_\mu)}
		(rs^{-1})^{- (\rho,\tau+\nu)}
		f^l_\tau(v^\mu_j {\omega'_\mu}^{-1} u^\nu_i \cdot m^\tau_l) \hspace{0.1cm}
		v^\nu_i \omega'_{\tau} \omega^{-1}_{\tau+\nu} u^\mu_j	.
	\end{align*}
	
	Since $\{m_i^\tau\}_{i=1}^{d_\tau}$ and $\{f_\tau^i\}_{\tau,i}$ are dual to each other, when $\nu \neq \mu$ the component in $z^{(\tau,l)}_{\nu,\mu}=0$. Also, we can simplify the expression of $z_\lambda$ by projectors $P_\tau$, then
	\begin{align*}
		z_\lambda = \sum_{\tau \leqslant \lambda}
		\sum_{\mu \in Q^+}
		\sum_{i,j}  (rs^{-1})^{- (\rho,\tau+\mu)}
		\langle {\omega'_\mu}, \omega_{\tau+\mu} \rangle
		tr(v^\mu_j u^\mu_i \circ P_\tau) \hspace{0.1cm}
		v^\mu_i \omega'_{\tau} \omega^{-1}_{\tau+\mu} u^\mu_j.
	\end{align*}		
	
	This completes the proof.
\end{proof}	

\begin{remark}
	The explicit expression of central elements of one-parameter quantum groups (e.g., see \cite{D23,J96}) is a special case of Theorem \ref{Expressions of central elements}. As in $U_q(g),$ for each $\lambda \in 2\Lambda \bigcap Q^+$ there exists an element $z_\lambda$ in the centre such that $\langle z_\lambda \;|\; -\rangle = \text{tr}_{L(\lambda)}(-.K^{-1}_{2\rho}),$ where
	\begin{align*}
		z_\lambda &=
		\sum_{\tau \in \operatorname{Wt}(L(\lambda))}
		\sum_{l=1}^{d^\tau}
		\sum_{\mu \in Q^+}
		\sum_{i,j=1}^{d_\mu}
		q^{-2 (\rho,\tau+\mu)} \hspace{0.1cm}
		f^l_\tau(v^\mu_j K_\mu u^\mu_i \cdot m^\tau_l) \hspace{0.1cm}
		(v^\mu_i K_\mu) K_{-2(\tau+\mu)} u^\mu_j.	\\
		&= \sum_{\tau \leqslant \lambda}
		\sum_{\mu \in Q^+}
		\sum_{i,j}
		q^{-2 (\rho,\tau+\mu)} \hspace{0.1cm}
		\text{tr}(v^\mu_j K_\mu u^\mu_i \circ P_\tau) \hspace{0.1cm}
		v^\mu_i K_{-2\tau-\mu} u^\mu_j.	
	\end{align*}
	When $r=q,\; s=q^{-1},\; \omega_i = K_i,\; \omega'_i= K_i^{-1}$, two expressions above are the same.
\end{remark}

\begin{theorem} \label{z*}
	Suppose rank $n$ is odd, there is one extra invertible central generator $z_{*}:= \prod_{i=1}^{n} (\omega_i \omega'_i)^{v^*_i}$ in $U_{r,s}(\mathfrak{g})$ $($which doesn't survive in $U_q(\mathfrak g)$$)$, where the vector $v^*$ is given in Proposition \ref{InvR+S}.
\end{theorem}
\begin{proof}
	It suffices to show $z_{*}$ commutes with generators $e_i$ and $f_i$.
	\begin{align*}
		z_{*} e_i
		=\big(\prod_{j=1}^{n} (\omega_j \omega'_j)^{v^*_j} \big) e_i
		&= e_i \big(\prod_{j=1}^{n} (\omega_j \omega'_j)^{v^*_j}\big) \prod_{j=1}^{n}  (a_{ji} a^{-1}_{ij})^{v^*_j}\\
		&= e_i z_{*} (rs)^{ \sum_{j=1}^n (\langle i,j \rangle - \langle j,i \rangle) v^*_j}  \\
		&= e_i z_{*} (rs)^{(R+S) v^{*}} = e_i z_{*},
	\end{align*}
	\begin{align*}
		z_{*} f_i
		&= f_i \big(\prod_{j=1}^{n} (\omega_j \omega'_j)^{v^*_j}\big) \prod_{j=1}^{n}  	(a_{ji} a^{-1}_{ij})^{-v^*_j}\\
		&= f z_{*} (rs)^{-(R+S) v^{*}} = f_i z_{*}.
	\end{align*}		
	
	This completes the proof.
\end{proof}

\begin{proposition} \label{z*out}
	When rank $n$ is odd, the central element $z_{*}$ is a fixed point of the Harish-Chandra homomorphism $\xi$, and $\xi(z_{*}) = z_{*} \notin (U^0_\flat)^W.$
\end{proposition}
\begin{proof}
	The first statement is proved directly as follows
	\begin{align*}
		\xi(z_*)
		&= (\prod_{j=1}^n \varrho^{-\rho} ( (\omega_j \omega'_j)^{v^*_j})) z_*
		= (\prod_{i,j=1}^n  (a_{ji}^{-1} a_{ij})^{v^*_j/2})  z_* \\
		&= (rs)^{\frac{1}{2} \sum_{i,j=1}^{n} (R+S)_{ij} v^*_j}  z_*
		= z_*.
	\end{align*}
	Since $U_{r,s}$ is also a $Q$-bigraded Hopf algebra \cite{HP12} where $e_i \in (U_{r,s})_{(\alpha_i,0)}$, $f_i \in (U_{r,s})_{(0,-\alpha_{i})}$, $\omega_i, \omega'_i \in (U_{r,s})_{(\alpha_i,-\alpha_i)}$. Thus all generators $\omega'_\eta \omega_{-\eta}$ of $U^0_\flat$ have the same bigrade $(0,0)$, they can not generate an element graded by $(\eta,-\eta), \eta \in Q \setminus {0},$ which leads to the second statement.
\end{proof}

\begin{theorem} \label{HC iso}
	If rank $n$ is even, then the Harish-Chandra homomorphism $\xi:\; Z(U_{r,s}(\mathfrak{g})) \longrightarrow (U^0_\flat)^W$ is an algebra isomorphism. If rank $n$ is odd, we have $\xi$ is injective and $  \operatorname{Im}(\xi) \supseteq (U^0_\flat)^W \otimes \mathbb{K}[z_{*},z_{*}^{-1}]$.
\end{theorem}
\begin{proof}
	Firstly by Theorem \ref{Expressions of central elements}, for each $z_\lambda,\; \lambda \in \Lambda^+ \cap Q,$ we have
	\begin{align*}
		z_{\lambda}^0
		&= \sum_{\mu \leqslant \lambda} (rs^{-1})^{- (\rho,\mu)} \operatorname{dim}(L(\lambda)_\mu) \omega'_\mu \omega_{-\mu}.
	\end{align*}
	Therefore, by definition in section 2.5, we have
	\begin{align*}
		\xi(z_\lambda)
		&= \gamma^{-\rho}(z_{\lambda}^0)
		= \sum_{\mu \leqslant \lambda} (rs^{-1})^{- (\rho,\mu)} \operatorname{dim}(L(\lambda)_\mu) \hspace{0.1cm} \varrho^{-\rho}(\omega'_\mu \omega_{-\mu}) \omega'_\mu \omega_{-\mu}\\
		&= \sum_{\mu \leqslant \lambda} \operatorname{dim}(L(\lambda)_\mu)  \omega'_\mu \omega_{-\mu}.
	\end{align*}
	Notice that $\{ \xi(z_\lambda) \mid \lambda \in \Lambda^+ \cap Q\} \subseteq \operatorname{Im} \xi \subseteq(U^0_\flat)^W$ in Theorem \ref{If in}. It is sufficient to show $(U^0_\flat)^W \subseteq \operatorname{Im}(\xi).$ Since for each $\eta \in Q,$ there exists a unique $\sigma \in W$ such that $\sigma(\eta) \in \Lambda^+ \cap Q,$ it is clear that the elements
	\begin{align*}
		\operatorname{av}(\lambda) = \frac{1}{|w|} \sum_{\sigma \in W} \sigma(\omega'_\lambda \omega_{-\lambda})
		= \frac{1}{|w|} \sum_{\sigma \in W} \omega'_{\sigma(\lambda)} \omega_{-\sigma(\lambda)},\, \lambda \in \Lambda^+ \cap Q
	\end{align*}
	form a basis of $(U^0_\flat)^W$.
	
	It only remains to prove that $\operatorname{av}(\lambda) \in  \operatorname{Im} (\xi).$
	By induction on the height of $\lambda$,
	if $\lambda = 0$, then $\operatorname{av}(\lambda) = 1 \in \operatorname{Im} (\xi).$ Assume that $\lambda >0 $, by the fact that $\operatorname{dim}(L(\lambda)_\mu) = \operatorname{dim}(L(\lambda)_{\sigma(\mu)}),\, \forall \; \sigma\in W$ and $\operatorname{dim}(L(\lambda)_\lambda) = 1$, we have
	\begin{align*}
		\xi(z_\lambda) &= \sum_{\mu \leqslant \lambda} \operatorname{dim}(L(\lambda)_\mu)  \omega'_\mu \omega_{-\mu} \\
		&= |W| \operatorname{av}(\lambda) + |W| \sum_{\mu < \lambda,\; \mu \in \Lambda^+ \cap Q} \operatorname{dim}(L(\lambda)_\mu) \operatorname{av}(\mu).
	\end{align*}		
	By the induction hypothesis, we get
	\begin{align*}
		\operatorname{av}(\lambda) &= \frac{1}{|W|} \xi(z_\lambda) - \sum_{\mu < \lambda,\; \mu \in \Lambda^+ \cap Q}\operatorname{dim}(L(\lambda)_\mu) \operatorname{av}(\mu) \in \operatorname{Im} \xi.
	\end{align*}
	Therefore,
	\begin{align*}
		(U^0_\flat)^W = \operatorname{span}_\mathbb{K} \{ \operatorname{av}(\lambda) \;|\; \lambda \in \Lambda^+ \cap Q\} \subseteq \operatorname{Im} (\xi).
	\end{align*}
	
	When rank $n$ is even, combining Theorem \ref{Image in}, we have that $\xi$ is isomorphic to its image. When rank $n$ is odd, we have $ (U^0_\flat)^W \otimes \mathbb{K}[z_{*},z_{*}^{-1}] \subseteq \operatorname{Im}(\xi)$ by Theorem \ref{z*} and Proposition \ref{z*out}.
\end{proof}

\subsection{Alternative approach to central elements}
Similar to \cite{ZGB91}, we can also construct central elements by taking quantum partial trace.
\begin{proposition}
	Let $\lambda \in \Lambda^+$ and $\zeta: U_{r,s}(\mathfrak{g}) \longrightarrow \operatorname{End}(L(\lambda))$ be the weight representation. If there is an operator $\Gamma \in U_{r,s}(\mathfrak{g}) \otimes \operatorname{End}(L(\lambda))$ such that
	\begin{align*}
		\Gamma \circ (id \otimes \zeta) \Delta(x) = (id \otimes \zeta)\Delta(x)  \circ \Gamma, \hspace{0.4cm} \forall\; x \in U_{r,s}(\mathfrak{g}),
	\end{align*}		
	then the element $c = \operatorname{tr}_2 (\Gamma (1\otimes \Theta)) \in Z(U_{r,s}(\mathfrak{g}))$.
\end{proposition}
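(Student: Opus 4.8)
The plan is to verify centrality directly, proving $xc=cx$ for all $x\in U_{r,s}(\mathfrak{g})$ (it suffices to treat the generators, but the argument is uniform). Write $\Delta(x)=\sum x_{(1)}\otimes x_{(2)}$ and abbreviate the hypothesis as $[\Gamma,(\operatorname{id}\otimes\zeta)\Delta(x)]=0$. Two structural facts drive everything. First, the partial trace $\operatorname{tr}_2\colon U\otimes\operatorname{End}(L(\lambda))\to U$ is cyclic only for factors trivial in the first slot, $\operatorname{tr}_2\bigl((1\otimes X)A\bigr)=\operatorname{tr}_2\bigl(A(1\otimes X)\bigr)$, whereas a first-slot element such as $x\otimes1$ may not be freely moved around $\Gamma$. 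Second, by the preceding lemma $\Theta$ implements $S^2$, i.e. $\Theta\,\zeta(u)=\zeta(S^2(u))\,\Theta$ for all $u\in U$. Since $\lambda\in\Lambda^+$ the module $L(\lambda)$ is finite dimensional (Lemma \ref{irrmod}), so the trace is defined, and it is a weight module, so this intertwining property applies.

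I would first reformulate the goal. Using $\operatorname{tr}_2(M)\,x=\operatorname{tr}_2\bigl(M(x\otimes1)\bigr)$, $x\,\operatorname{tr}_2(M)=\operatorname{tr}_2\bigl((x\otimes1)M\bigr)$, and $(x\otimes1)(1\otimes\Theta)=x\otimes\Theta=(1\otimes\Theta)(x\otimes1)$, I record
\[ cx=\operatorname{tr}_2\bigl(\Gamma(x\otimes\Theta)\bigr),\qquad xc=\operatorname{tr}_2\bigl((x\otimes1)\Gamma(1\otimes\Theta)\bigr)=\operatorname{tr}_2\bigl((x\otimes\Theta)\Gamma\bigr),\]
the last equality being the admissible front-to-back cyclic move applied to $1\otimes\Theta$. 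Thus the entire statement reduces to one ``$\Theta$-corrected cyclicity'' identity, $\operatorname{tr}_2\bigl(\Gamma(x\otimes\Theta)\bigr)=\operatorname{tr}_2\bigl((x\otimes\Theta)\Gamma\bigr)$.

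To prove this I start from $cx=\operatorname{tr}_2\bigl(\Gamma(x\otimes\Theta)\bigr)$ and resolve the first-slot $x$ into coproduct pieces by the antipode axiom, $x\otimes\operatorname{id}=\sum(x_{(1)}\otimes\zeta(x_{(2)}))(1\otimes\zeta(S(x_{(3)})))$. The hypothesis $[\Gamma,(\operatorname{id}\otimes\zeta)\Delta(\cdot)]=0$, applied via coassociativity to the first two legs, lets me slide $\Gamma$ to the right of $\sum(x_{(1)}\otimes\zeta(x_{(2)}))$, giving the intermediate quantity $P:=\operatorname{tr}_2\bigl(\sum(x_{(1)}\otimes\zeta(x_{(2)}))\,\Gamma\,(1\otimes\zeta(S(x_{(3)}))\Theta)\bigr)$ with $cx=P$. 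I then move the purely-second-slot block $1\otimes\zeta(S(x_{(3)}))\Theta$ to the front by the admissible cyclic move, convert $\Theta\zeta(x_{(2)})=\zeta(S^2(x_{(2)}))\Theta$, and collapse the second slot via the antipode identity $\sum x_{(1)}\otimes S(x_{(3)})S^2(x_{(2)})=x\otimes1$ (which follows from $S(x_{(3)})S^2(x_{(2)})=S(S(x_{(2)})x_{(3)})$ together with $\sum S(x_{(2)})x_{(3)}=\varepsilon(\cdot)1$). This yields $P=\operatorname{tr}_2\bigl((x\otimes\Theta)\Gamma\bigr)=xc$, whence $cx=P=xc$ and $c\in Z(U_{r,s}(\mathfrak{g}))$.

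The main obstacle is precisely that $x$ and $\Gamma$ need not commute, so the naive cyclic identity fails: partial-trace cyclicity is available only for first-slot-trivial factors. The hypothesis on $\Gamma$ is exactly what permits trading a first-slot $x$ for second-slot contributions $\zeta(x_{(2)})$, and the pivotal element $\Theta$ is engineered so that its $S^2$-twist, combined with the antipode axioms, cancels the spurious $S^2$ introduced by the trace cyclicity. The only genuinely delicate point is the Sweedler/coassociativity bookkeeping when splitting three coproduct legs and when invoking $[\Gamma,(\operatorname{id}\otimes\zeta)\Delta(\cdot)]=0$ on a sub-coproduct; I would sanity-check the key identity $\sum x_{(1)}\otimes S(x_{(3)})S^2(x_{(2)})=x\otimes1$ on group-like and skew-primitive generators before committing to it.
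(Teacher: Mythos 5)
Your argument is correct. The reduction of centrality to the identity $\operatorname{tr}_2\bigl(\Gamma(x\otimes\Theta)\bigr)=\operatorname{tr}_2\bigl((x\otimes\Theta)\Gamma\bigr)$ is sound (both cyclic moves you use involve only first-slot-trivial factors), the factorization $x\otimes\operatorname{id}=\sum(x_{(1)}\otimes\zeta(x_{(2)}))(1\otimes\zeta(S(x_{(3)})))$ and the collapse $\sum x_{(1)}\otimes S(x_{(3)})S^2(x_{(2)})=x\otimes 1$ are standard consequences of the antipode axiom and coassociativity, and the $S^2$-intertwining lemma for $\Theta$ enters exactly where it must. The paper proves the same statement by a generator-by-generator check: it verifies $[c,\omega_i^{\pm1}]=[c,\omega_i'^{\pm1}]=0$ by inserting $1\otimes\Theta\omega_i^{\mp1}$ into $[\Gamma,\Delta(\omega_i^{\pm1})]=0$ and taking $\operatorname{tr}_2$, and handles $e_i$ by the trick of showing $[c,e_i\omega_i^{-1}]=0$ using the explicit coproduct $\Delta(e_i\omega_i^{-1})=e_i\omega_i^{-1}\otimes\omega_i^{-1}+1\otimes e_i\omega_i^{-1}$ (similarly for $f_i$). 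The underlying mechanism is identical in both proofs --- the commutation hypothesis on $\Gamma$, cyclicity of the partial trace in the second slot, and $\Theta\zeta(u)=\zeta(S^2(u))\Theta$ --- but your Sweedler-calculus version is uniform in $x$ and makes it transparent that the result is a general fact about Hopf algebras equipped with an element implementing $S^2$ on the module, whereas the paper's version is more elementary, needing only the coproducts of the generators and no three-fold coproduct bookkeeping. The one point you flag as delicate --- applying the hypothesis to the first two legs of $\Delta^{(2)}(x)$ --- is indeed fine by coassociativity, since $\sum\Delta(x_{(1)})\otimes x_{(2)}=\sum x_{(1)}\otimes x_{(2)}\otimes x_{(3)}$ lets you invoke $[\Gamma,(\operatorname{id}\otimes\zeta)\Delta(x_{(1)})]=0$ termwise.
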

\begin{proof}
	It is enough to check this element commutes with all generators of $U$. In convenience we view the operator $(id \otimes \zeta)\Delta(x)$ as $\Delta(x),$ and write $\Gamma = \sum_{\Gamma} \Gamma_{(1)} \otimes \Gamma_{(2)}$ . Since $[\Gamma, \Delta(\omega_i^{\pm 1})]= 0,$ and $S^2(u) \Theta = \Theta u,\, \forall\; u \in U$, we have
	\begin{align*}
		0 & =\operatorname{tr}_2 ([\Gamma,\Delta(\omega_i^{\pm 1})] \cdot (1 \otimes \Theta \omega_i^{\mp 1})) \\
		&=  \sum_{\Gamma} \Gamma_{(1)} \omega_i^{\pm 1} \otimes \operatorname{tr}( \Gamma_{(2)} \omega_i^{\pm 1} \Theta \omega_i^{\mp 1}) - \sum_{\Gamma}  \omega_i^{\pm 1} \Gamma_{(1)} \otimes \operatorname{tr} (\omega_i^{\pm 1} \Gamma_{(2)} \Theta \omega_i^{\mp 1})  \\
		&=  \sum_{\Gamma} \Gamma_{(1)} \omega_i^{\pm 1} \otimes \operatorname{tr}( \Gamma_{(2)}  \Theta ) - \sum_{\Gamma}  \omega_i^{\pm 1} \Gamma_{(1)} \otimes \operatorname{tr} ( \Gamma_{(2)} \Theta )  \\
		&= \sum_{\Gamma} [\Gamma_{(1)},\omega_i^{\pm 1}] \otimes \operatorname{tr} ( \Gamma_{(2)} \Theta )\\
		&=[\operatorname{tr}_2 (\Gamma (1 \otimes \Theta)),\omega_i^{\pm 1}]=[c,\omega_i^{\pm 1}].
	\end{align*}
	Similarly, we have $[c,{\omega'_i}^{\pm 1}] = 0$.
	
	To prove $[c,e_i] = 0,$ we have to check $[c,e_i \omega_i^{-1}] = 0,$
	\begin{align*}
		0 & =\operatorname{tr}_2 ([\Gamma,\Delta(e_i \omega_i^{-1})] \cdot (1 \otimes \Theta \omega_i)) \\
		&= \operatorname{tr}_2 ([\Gamma,e_i \omega_i^{-1} \otimes\omega_i^{-1} +1 \otimes e_i \omega_i^{-1}](1\otimes \Theta \omega_i) ) \\
		&= \operatorname{tr}_2 (\Gamma(e_i \omega_i^{-1}) \otimes \Theta + \Gamma(1\otimes e_i \Theta) \\
		& \hspace{1cm} -(e_i \omega_i^{-1} \otimes \omega_i^{-1}) \Gamma (1\otimes \Theta\omega_i) -(1\otimes e_i \omega_i^{-1})\Gamma(1\otimes \Theta \omega_i) ) \\
		&= \operatorname{tr}_2 ( \Gamma(1\otimes \Theta)(e_i \omega_i^{-1} \otimes 1) + \Gamma(1\otimes e_i \Theta) \\
		&\hspace{1cm} -(e_i \omega_i^{-1} \otimes 1)\Gamma(1\otimes\Theta) - \Gamma(1\otimes \Theta \omega_i e_i w_i^{-1})  )\\
		&= [\operatorname{tr}_2 (\Gamma(1\otimes\Theta)),e_i \omega_i^{-1}] = [c,e_i \omega_i^{-1}].
	\end{align*}		
	The fifth equation holds since $\Theta \omega_i e_i w_i^{-1} = \omega_i S^2(e_i) w_i^{-1} \Theta = e_i \Theta$.
	
	Similarly, we have $[c,f_i] = 0$.
\end{proof}

\section{Centre of $\breve{U}_{r,s}(\mathfrak{g})$ of weight lattice type}
In this section, we will add some group-like elements to $U_{r,s}(\mathfrak{g})$ to get the two-parameter quantum group $\breve{U}_{r,s}(\mathfrak{g})$ of weight lattice type.

\begin{defi}
	The algebra $\breve{U}_{r,s}(\mathfrak{g})$ of the so-called weight lattice form of $U_{r,s}(\mathfrak{g})$, is the unital associative algebra generated by elements $e_i,$ $f_i,$  $\omega_{\varpi_i}^{\pm 1},$ ${\omega'_{\varpi_i}}^{\pm 1}\;$ $(i=1,\cdots,n)$ over $\mathbb{K}$. Set $\omega_i := \prod_{l=1}^n {\omega_{\varpi_l}}^{c_{li}},\; \omega'_j := \prod_{k=1}^n {\omega'_{\varpi_k}}^{c_{kj}}$, the generators satisfy the following relations:
	
	\hspace{1cm}
	
	$(X1,\Lambda)$ \,
	$\omega_{\varpi_i}^{\pm 1} {\omega'_{\varpi_j}}^{\pm 1} =  {\omega'_{\varpi_j}}^{\pm 1}\omega_{\varpi_i}^{\pm 1},
	\qquad \qquad
	\omega_{\varpi_i} \omega_{\varpi_i}^{-1} = 1= \omega'_{\varpi_j} {\omega'_{\varpi_j}}^{-1},$
	
	$(X2,\Lambda)$ \,
	$\omega_{\varpi_i} e_{j} \omega_{\varpi_i}^{-1} =
	r^{\langle j , \varpi_i \rangle} s^{-\langle \varpi_i , j \rangle}e_{j},
	\qquad \omega_{\varpi_i} f_{j} \omega_{\varpi_i}^{-1} =
	r^{-\langle j , \varpi_i \rangle} s^{\langle \varpi_i , j \rangle} f_{j},$
	
	\hspace{1.3cm} $\omega_{\varpi_i}^{\prime} e_{j} \omega_{\varpi_i}^{\prime-1} =
	r^{-\langle \varpi_i , j\rangle} s^{\langle j,\varpi_i\rangle} e_{j},
	\qquad \omega_{\varpi_i}^{\prime} f_{j} \omega_{\varpi_i}^{\prime-1} =r^{\langle \varpi_i , j\rangle} s^{-\langle j,\varpi_i\rangle} f_{j},$
	
	$(X3)$ \,
	$[e_i, f_j] = \delta_{ij} \frac{\omega_i -\omega_i'}{r_i - s_i},$

	$(X4)$ \,
	$(\operatorname{ad}_l e_i)^{1-c_{ij}}(e_j) = 0,
	\qquad (\operatorname{ad}_r f_i)^{1-c_{ij}}(f_j) = 0, \qquad (i\neq j).$
	
	\hspace{1cm}
	
	Here $\breve{U}_{r,s} (\mathfrak{g})$ naturally extends the Hopf structure of $U_{r,s} (\mathfrak{g})$ (elements $\omega_{\varpi_i}, {\omega'_{\varpi_i}}$ are group-likes).
\end{defi}	

\begin{proposition}
	There exists a unique skew-dual pairing $ \langle \; -,- \, \rangle: \breve{\mathcal{B}}' \times \breve{\mathcal{B}} \longrightarrow \mathbb{K}$ of the Hopf subalgebra $\breve{\mathcal{B}}$ and $\breve{\mathcal{B}}'$ satisfying
	\begin{align*}
		& \langle f_i , e_j \rangle = \delta_{ij} \frac{1}{s_i - r_i },
		\hspace{4.9cm}   1 \leqslant i , j \leqslant n, \\
		& \langle \omega'_{\varpi_i} , \omega_{\varpi_j} \rangle =
		r^{\langle \varpi_i , \varpi_j \rangle} s^{-\langle \varpi_j , \varpi_i \rangle},
		\hspace{3.2cm}   1 \leqslant i , j \leqslant n, \\
		& \langle {\omega'_i}^{\pm 1} , \omega_j^{-1} \rangle = \langle {\omega'_i}^{\pm 1} , \omega_j \rangle ^{-1} = \langle \omega'_i , \omega_j \rangle^{\mp 1},
		\hspace{1.8cm} 1 \leqslant i,j \leqslant n,
	\end{align*}
	and all other pairs of generators are 0.
\end{proposition}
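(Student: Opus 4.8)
The plan is to mirror the argument for the root-lattice pairing in Proposition~\ref{skew-pair}: first deduce uniqueness from the skew Hopf-pairing axioms, then establish existence by extending that pairing along the inclusion of the root-lattice torus into the weight-lattice torus. The only structural change in passing from $U_{r,s}(\mathfrak{g})$ to $\breve{U}_{r,s}(\mathfrak{g})$ is the enlargement of the Cartan part from $Q\times Q$ to $\Lambda\times\Lambda$; the generators $e_i,f_i$, the commutator relation $(X3)$ and the Serre relations $(X4)$ are unchanged, so the whole burden falls on showing that the enlarged torus bicharacter is compatible with the inherited data.

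For uniqueness, recall that a skew-dual pairing is determined by the Hopf-pairing axioms $\langle x,ab\rangle=\sum_{(x)}\langle x_{(1)},a\rangle\langle x_{(2)},b\rangle$ and $\langle xy,a\rangle=\sum_{(a)}\langle x,a_{(2)}\rangle\langle y,a_{(1)}\rangle$ (in the skew convention of Proposition~\ref{skew-pair}), together with $\langle x,1\rangle=\varepsilon(x)$ and $\langle 1,a\rangle=\varepsilon(a)$. Because $\breve{\mathcal{B}}$ is generated by the $e_i$ and $\omega_{\varpi_i}^{\pm1}$ and $\breve{\mathcal{B}}'$ by the $f_i$ and ${\omega'_{\varpi_i}}^{\pm1}$, these axioms reduce the value of $\langle-,-\rangle$ on any pair of monomials to a product of its values on the listed generators, regardless of the precise twist convention. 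Hence the prescribed data determine the form completely, and it is unique provided it exists.

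For existence I would begin with the torus. Its pairing must be a bicharacter on $\Lambda\times\Lambda$, and since $\Lambda$ is free abelian the prescription $\langle\omega'_{\varpi_i},\omega_{\varpi_j}\rangle=r^{\langle\varpi_i,\varpi_j\rangle}s^{-\langle\varpi_j,\varpi_i\rangle}$ extends with no consistency obstruction. The decisive compatibility is that this bicharacter restricts to the old one on the root sublattice: with $\omega_j=\prod_l\omega_{\varpi_l}^{c_{lj}}$, $\omega'_i=\prod_k{\omega'_{\varpi_k}}^{c_{ki}}$ one has
\[
\langle\omega'_i,\omega_j\rangle=\prod_{k,l}\langle\omega'_{\varpi_k},\omega_{\varpi_l}\rangle^{c_{ki}c_{lj}},
\]
and inserting $\langle\varpi_k,\varpi_l\rangle=\sum_{p,q}c^{pk}c^{ql}\langle p,q\rangle$ together with $\sum_k c_{ki}c^{pk}=\delta_{pi}$ collapses the exponents to give $\langle\omega'_i,\omega_j\rangle=r^{\langle i,j\rangle}s^{-\langle j,i\rangle}=a_{ji}$, exactly the value of Proposition~\ref{skew-pair}. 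With this identification the skew-pairing of $U_{r,s}(\mathfrak{g})$ extends verbatim on the $e_i,f_i$ factors; the mixed relations $(X2,\Lambda)$ are then checked by pairing $f_k$ against $\omega_{\varpi_i}e_j\omega_{\varpi_i}^{-1}$ and expanding via the two axioms, the matching scalars arising precisely because the torus bicharacter was built from the Euler form, while the Serre relations $(X4)$ live on the $Q^+$-graded part untouched by the enlargement and are inherited from Proposition~\ref{skew-pair}.

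The main obstacle is well-definedness, i.e. confirming that the generator values never clash with a defining relation of $\breve{\mathcal{B}}$ or $\breve{\mathcal{B}}'$. This is where the enlargement could in principle fail, but it splits cleanly: on the torus freeness of $\Lambda$ removes any obstruction, so the only genuine computation is the identity $\langle\omega'_i,\omega_j\rangle=a_{ji}$ displayed above, which pins the new pairing to the old one on the overlap. Once that is secured, all relations involving $e_i,f_i$, including the antipode compatibility $\langle S(a),S(b)\rangle=\langle a,b\rangle$, transfer directly from the group-like and skew-primitive structure already handled for $U_{r,s}(\mathfrak{g})$, and the asserted pairing on $\breve{\mathcal{B}}'\times\breve{\mathcal{B}}$ follows.
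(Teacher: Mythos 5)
Your proposal is correct; the paper actually gives no proof of this proposition, treating it as a routine extension of Proposition~\ref{skew-pair} from the cited references, and your argument supplies exactly the verification being taken for granted. In particular, your key computation --- that the bicharacter $\langle\omega'_{\varpi_i},\omega_{\varpi_j}\rangle=r^{\langle\varpi_i,\varpi_j\rangle}s^{-\langle\varpi_j,\varpi_i\rangle}$ on the free abelian group $\Lambda\times\Lambda$ restricts on the root sublattice to $\langle\omega'_i,\omega_j\rangle=a_{ji}$ via $\sum_k c^{pk}c_{ki}=\delta_{pi}$ --- is precisely the compatibility the paper records immediately afterwards as the Corollary $\langle\omega'_{\lambda_1},\omega_{\lambda_2}\rangle=r^{\langle\lambda_1,\lambda_2\rangle}s^{-\langle\lambda_2,\lambda_1\rangle}$.
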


\begin{coro}
	For any $\lambda_1 , \lambda_2 \in \Lambda,$ we have
	$\langle \omega'_{\lambda_1} , \omega_{\lambda_2} \rangle =
	r^{\langle \lambda_1 , \lambda_2 \rangle} s^{-\langle \lambda_2 , \lambda_1 \rangle}.$
\end{coro}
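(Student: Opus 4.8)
The plan is to reduce the general statement to the fundamental-weight case already recorded in the preceding proposition, exploiting that both $\omega'_{\lambda_1}$ and $\omega_{\lambda_2}$ are \emph{group-like} elements assembled multiplicatively from the generators $\omega'_{\varpi_i}$ and $\omega_{\varpi_j}$. First I would expand $\lambda_1 = \sum_{i=1}^n a_i \varpi_i$ and $\lambda_2 = \sum_{j=1}^n b_j \varpi_j$ with $a_i, b_j \in \mathbb{Z}$, so that by definition $\omega'_{\lambda_1} = \prod_i {\omega'_{\varpi_i}}^{a_i}$ and $\omega_{\lambda_2} = \prod_j \omega_{\varpi_j}^{b_j}$.

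Second, I would invoke the defining axioms of a skew-dual pairing together with the fact that a group-like element $g$ satisfies $\Delta(g) = g \otimes g$. For group-likes these axioms collapse to honest bimultiplicativity, $\langle g_1 g_2, h \rangle = \langle g_1, h \rangle \langle g_2, h \rangle$ and $\langle g, h_1 h_2 \rangle = \langle g, h_1 \rangle \langle g, h_2 \rangle$, the commutativity among the $\omega'_{\varpi_i}$ and among the $\omega_{\varpi_j}$ making the ordering immaterial, while the third relation of the proposition, $\langle {\omega'_i}^{\pm 1}, \omega_j \rangle = \langle \omega'_i, \omega_j \rangle^{\pm 1}$ (and its $\omega_j^{-1}$ analogue), extends this to negative exponents. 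Applying it repeatedly yields
\[
\langle \omega'_{\lambda_1}, \omega_{\lambda_2} \rangle = \prod_{i,j=1}^n \langle \omega'_{\varpi_i}, \omega_{\varpi_j} \rangle^{a_i b_j}.
\]

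Third, I would substitute the fundamental-weight value $\langle \omega'_{\varpi_i}, \omega_{\varpi_j} \rangle = r^{\langle \varpi_i, \varpi_j \rangle} s^{-\langle \varpi_j, \varpi_i \rangle}$ and collect the exponents of $r$ and of $s$ separately. The $r$-exponent is $\sum_{i,j} a_i b_j \langle \varpi_i, \varpi_j \rangle$ and the $s$-exponent is $-\sum_{i,j} a_i b_j \langle \varpi_j, \varpi_i \rangle$; by the bilinear extension of the Euler form to $\Lambda$ in Definition \ref{Euler} these sums are exactly $\langle \lambda_1, \lambda_2 \rangle$ and $-\langle \lambda_2, \lambda_1 \rangle$, which gives the asserted identity.

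I expect no genuine obstacle here: the only inputs are bimultiplicativity of the pairing on group-likes and bilinearity of the Euler form, both already in hand, so the argument is essentially bookkeeping. The one point deserving a line of care is that $\omega_{\lambda}$ and $\omega'_{\lambda}$ are well defined for $\lambda \in \Lambda$ and that, when $\lambda = \alpha_i \in Q$, they restrict to the original $\omega_i$ via $\omega_i = \prod_l \omega_{\varpi_l}^{c_{li}}$; this compatibility guarantees that the formula specializes correctly to the root-lattice pairing $\langle \omega'_i, \omega_j \rangle = a_{ji}$ of the unextended algebra $U_{r,s}(\mathfrak{g})$.
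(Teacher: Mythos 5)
Your proof is correct and is exactly the argument the paper intends: the corollary is stated without proof as an immediate consequence of the preceding proposition, via bimultiplicativity of the skew-dual pairing on group-like elements and the bilinear extension of the Euler form to $\Lambda$. Your added remark on compatibility with the root-lattice pairing $\langle \omega'_i, \omega_j\rangle = a_{ji}$ is a sensible sanity check but not needed for the statement itself.
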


It is obvious that $U_{r,s}$ is a Hopf subalgebra of $\breve{U}_{r,s},$ and these two Hopf algebras have the same category of the weight modules, namely $\mathcal{O}^{r,s}_f$.
Notice that our approach to the Harish-Chandra isomorphism of $\breve{U}$ is exactly the same as in the previous sections, so it is sufficient to show that the lemmas and properties affected by the extension still hold.

\begin{lemma}
	The Rosso form $\langle-\;|\;-\rangle$ on $\breve{U}$ as in Definition \ref{Rosso} is non-degenerate.
\end{lemma}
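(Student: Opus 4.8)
The plan is to repeat, almost verbatim, the proof that $\langle-,-\rangle_U$ is nondegenerate, since the only structural change in passing to $\breve U$ is that the toral part is enlarged from $U^0=\bigoplus_{\eta,\phi\in Q}\mathbb{K}\,\omega'_\eta\omega_\phi$ to $\breve U^0=\bigoplus_{\eta,\phi\in\Lambda}\mathbb{K}\,\omega'_\eta\omega_\phi$, while the nilpotent parts $U^\pm$ are untouched. Thus $\breve U$ retains the triangular decomposition $\breve U\cong U^-\otimes\breve U^0\otimes U^+$, the nondegeneracy of the skew-pairing on the root-graded pieces (Theorem \ref{nondegen for grading}) survives, and the Rosso form stays orthogonal for the $Q$-grading. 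One therefore reduces to showing that if $u\in U^-_{-\nu}\,\breve U^0\,U^+_\mu$ satisfies $\langle u,v\rangle_{\breve U}=0$ for all $v\in U^-_{-\mu}\,\breve U^0\,U^+_\nu$, then $u=0$.

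Next I would choose, exactly as before, a basis $\{u^\mu_j\}$ of $U^+_\mu$ together with its dual basis $\{v^\mu_i\}$ in $U^-_{-\mu}$ with respect to the graded pairing, and expand $u=\sum k_{i,j,\eta,\phi}\,v^\nu_i\omega'_\eta\omega_\phi u^\mu_j$ with $\eta,\phi\in\Lambda$. The same computation of $\langle v^\nu_i\omega'_\eta\omega_\phi u^\mu_j,\,v^\mu_k\omega'_{\eta'}\omega_{\phi'}u^\nu_l\rangle_{\breve U}$ as in the $U$-case factors the pairing into the scalar $(rs^{-1})^{\frac{2}{\ell}(\rho,\nu)}\delta_{kj}\delta_{il}$ times the value $\chi_{\eta,\phi}(\eta',\phi')$, now regarded as a character of the larger group $\Lambda\times\Lambda$. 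The vanishing hypothesis becomes a single linear relation $\sum_{\eta,\phi}k_{k,l,\eta,\phi}\,\chi_{\eta,\phi}=0$ among these characters, and Dedekind's theorem forces every coefficient to vanish, provided the characters $\{\chi_{\eta,\phi}\}_{(\eta,\phi)\in\Lambda\times\Lambda}$ are pairwise distinct.

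The only point that genuinely requires attention is therefore the $\Lambda$-analogue of Lemma \ref{chi}: if $\chi_{\eta,\phi}=\chi_{\eta',\phi'}$ with all four weights in $\Lambda$, then $(\eta,\phi)=(\eta',\phi')$. Writing $\zeta=\eta-\eta'$ and evaluating at $(0,\varpi_j)$ gives, via the Corollary $\langle\omega'_{\lambda_1},\omega_{\lambda_2}\rangle=r^{\langle\lambda_1,\lambda_2\rangle}s^{-\langle\lambda_2,\lambda_1\rangle}$, the identity $r^{\langle\zeta,\varpi_j\rangle}s^{-\langle\varpi_j,\zeta\rangle}=1$ for every $j$. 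Since $r,s$ are algebraically independent, this yields $\langle\zeta,\varpi_j\rangle=0=\langle\varpi_j,\zeta\rangle$ for all $j$; adding the two and using that the symmetrization of the Euler form equals $\frac{2}{\ell}(-,-)$ (Proposition \ref{sym}) gives $\frac{2}{\ell}(\zeta,\varpi_j)=0$ for all $j$. Because $\{\varpi_j\}$ is a basis of $E$ and $(-,-)$ is nondegenerate, $\zeta=0$, i.e. $\eta=\eta'$; evaluating instead at $(\varpi_j,0)$ gives $\phi=\phi'$ in the same way.

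I expect the genuine work to lie in this distinctness statement, but it is in fact \emph{easier} over $\Lambda$ than over $Q$: over the weight lattice one tests directly against the full dual basis $\{\varpi_j\}$ and invokes nondegeneracy of $(-,-)$ on all of $E$, with no need to argue separately that $R-S$ has trivial kernel on a sublattice. The remaining bookkeeping — that the skew-pairing and the Rosso form are well-defined on $\breve U$ once $\omega_i,\omega'_i$ are declared to be the stipulated products of the $\omega_{\varpi_l},\omega'_{\varpi_k}$ — is guaranteed by the preceding Proposition and Corollary, and every other ingredient (graded orthogonality, the $S^2$-scalar $(rs^{-1})^{\frac{2}{\ell}(\rho,\nu)}$, and the Dedekind independence argument) transfers verbatim from the $U$-computation.
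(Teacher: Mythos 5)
Your proposal is correct and follows essentially the same route as the paper: both reduce the statement to the injectivity of $(\eta,\phi)\mapsto\chi_{\eta,\phi}$ on $\Lambda\times\Lambda$ (everything else — graded orthogonality, dual bases, the $S^2$-scalar, Dedekind's theorem — transferring verbatim from the $U$-case), and both verify that injectivity by evaluating at $(0,\varpi_j)$ and invoking the nondegeneracy of the symmetrized Euler form $R-S$. The only cosmetic difference is that you phrase the final step via $(\zeta,\varpi_j)=0$ for all $j$, whereas the paper writes the same condition as $C^{-T}(R-S)(\eta-\eta')=0$ in $\alpha$-coordinates.
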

\begin{proof}
	The only difference between Theorem \ref{nondegen} and the current result is the character. Define a group homomorphism $\chi_{\eta,\phi}: \Lambda \times \Lambda \longrightarrow \breve{\mathbb{K}}^\times$ by
	\begin{align*}
		\chi_{\eta,\phi} (\nu,\mu) = \langle \omega'_\eta, \omega_{\mu}  \rangle\langle \omega'_{\nu},\omega_{\phi} \rangle.
	\end{align*}
	Parallel to Lemma \ref{chi}, we only need to prove if $\chi_{\eta, \phi} = \chi_{\eta', \phi'},$ then $(\eta,\phi) = (\eta',\phi').$
	
	Let $\eta  = \sum_{i=1}^{n} \eta_i \alpha_i ,\; \eta'  = \sum_{i=1}^{n} \eta'_i \alpha_i,$ for $j =1,\cdots,n$, we have
	\begin{align*}
		1=\frac{\chi_{\eta,\phi}(0,\varpi_j)}{\chi_{\eta',\phi'}(0,\varpi_j)}
		&= \prod_{i=1}^{n}  \langle \omega'_i ,\omega_{\varpi_j} \rangle^{\eta_i - \eta'_i}
		=\prod_{i=1}^{n} \prod_{k=1}^n \langle \omega_{i}^{\prime}, \omega_{k} \rangle^{{c}^{kj} (\eta_i - \eta'_i)} \\
		&
		=r^{\sum_{i,k=1}^n {c}^{kj} R_{ki}(\eta_i - \eta'_i)} s ^{\sum_{i,k=1}^n {c}^{kj}S_{ki}(\eta_i - \eta'_i)},
	\end{align*}		
	which leads to ${C}^{-T} (R-S) (\eta - \eta') = 0$ and yields $\eta = \eta'$. A similar argument leads to $\phi = \phi'$.
\end{proof}
Repeating the proof of Theorem \ref{inj if}, one gets
\begin{theorem}
	The Harish-Chandra homomorphism $\breve{\xi} : Z(\breve{U}) \longrightarrow \breve{U}^0$ is injective.
\end{theorem}

Notice that the proof of Lemma \ref{nondegen of varho} is independent of where the coefficients $\eta_i,\; \phi_i$ are taken from, it follows that Lemma \ref{nondegen of varho} also hold true for $\breve{U}$.
The same reason can be used to show that Lemmas \ref{nondegen of varho}-\ref{If in}, and \ref{Rooso realize} still hold in $\breve{U}$, which leads to Theorem \ref{Image in}, that is $\operatorname{Im} (\breve{\xi}) \subseteq (\breve{U}^0_\flat)^W,$ where $\breve{U}^0_\flat := \bigoplus_{\eta \in \Lambda} \mathbb{K} \omega'_\eta \omega_{-\eta}$. Parallel to Proposition \ref{Construct a central element} and Theorem \ref{Expressions of central elements}, there are enough group-like elements to define all $z_\lambda \in \breve{U}, \, \forall\; \lambda \in \Lambda^+$.

\begin{example}
	Let $\mathfrak{g} = \mathfrak{sl}_2,\; \lambda = \varpi_1 = \frac{1}{2} \alpha_1$, then $z_* = \omega'\omega \in Z(U_{r,s}(\mathfrak{sl}_2))$, and the central element $z_{\varpi_1} \in Z(\breve{U}_{r,s}(\mathfrak{sl}_2))$ is given by
	\begin{equation*}
		\begin{aligned}
			z_{\varpi_1} &= \sum_{\tau=\pm \frac{1}{2} \alpha_1}
			\sum_{\mu=0, \alpha_1}
			(rs^{-1})^{- (\rho,\tau+\mu)}
			\langle {\omega'_\mu}, \omega_{\tau+\mu} \rangle
			tr(v^\mu u^\mu \circ P_\tau) \hspace{0.1cm}
			v^\mu \omega'_{\tau} \omega^{-1}_{\tau+\mu} u^\mu\\
			&= r^{-\frac{1}{2}} s^{\frac{1}{2}}  {\omega'}^{\frac{1}{2}} \omega^{-\frac{1}{2}}
			+ r^{\frac{1}{2}} s^{-\frac{1}{2}}  {\omega'}^{-\frac{1}{2}} \omega^{\frac{1}{2}}
			+ (s-r)^2 r^{-\frac{1}{2}} s^{-\frac{1}{2}}  f {\omega'}^{-\frac{1}{2}} \omega^{-\frac{1}{2}} e\\
			&= (s-r)^2 (rs)^{-\frac{1}{2}} \; z_*^{-\frac{1}{2}} \mathcal{C},
		\end{aligned}
	\end{equation*}
	where $\mathcal{C} = fe  + \frac{r\omega + s \omega'}{(s-r)^2} \in Z(U_{r,s}(\mathfrak{sl}_2))$ is the Casimir element of $U_{r,s}(\mathfrak{sl}_2)$, however, this central element $z_{\varpi_1} \notin U_{r,s}(\mathfrak{sl}_2)$. Further, Benkart et al. showed that $Z(U_{r,s}(\mathfrak{sl}_2))=\mathbb{K}[z_*,z_*^{-1}]\otimes\mathbb{K}[\mathcal{C}]$. This is because they could view $U_{r,s}(\mathfrak{sl}_{n+1})$ as a subalgebra of $U_{r,s} (\mathfrak{gl}_{n+1})$ and prove that $\xi(Z(U_{r,s}(\mathfrak{sl}_2))) \subseteq (U^0_\natural)^W$ based on extra Cartan generators of $U_{r,s} (\mathfrak{gl}_{n+1})$ \textnormal{(}see \cite{BKL06}\textnormal{)}, and luckily $\sigma_1(\omega'_\eta \omega_\phi) = \omega'_\phi \omega_\eta$, then
	\begin{equation*}
		\begin{split}
			(U^0_\natural)^W &=  \Bigl\{\,x=\sum_{\eta,\phi \in Q} k_{\eta,\phi} \omega'_\eta \omega_\phi \;\Big|\; k_{\eta,\phi} = k_{\phi,\eta},\; \forall\; \eta,\phi \in Q\,\Bigr\}\\
			&= \mathbb{K}[(\omega'\omega)^{\pm 1}, (\omega+\omega')] = \mathbb{K}[\xi(z_*)^{\pm 1}, \xi(\mathcal{C})] =\xi(\mathbb{K}[z_*^{\pm 1}, \mathcal{C}]),
		\end{split}
	\end{equation*}
	where the third equation holds by $\xi(\mathcal{C})=\frac{r \gamma^{-\frac{1}{2} \alpha_1}(\omega) + s \gamma^{-\frac{1}{2} \alpha_1}(\omega') }{(s-r)^{2}}= \frac{(rs)^{\frac{1}{2}}}{(s-r)^2} (\omega+\omega')$. Thus, the centre $Z(U_{r,s}(\mathfrak{sl}_2))$ has to be $\mathbb{K}[z_*,z_*^{-1}]\otimes\mathbb{K}[\mathcal{C}]$ since $\xi$ is injective. Similarly, we have $Z(\breve{U}_{r,s}(\mathfrak{sl}_2)) = \mathbb{K}[z_*^{\frac{1}{2}},z_*^{-\frac{1}{2}}]\otimes\mathbb{K}[\mathcal{C}]=\mathbb{K}[z_*^{\frac{1}{2}},z_*^{-\frac{1}{2}}]\otimes\mathbb{K}[z_{\varpi_1}]$.
	
	Let $\lambda = \alpha_1 \in \Lambda^+ \bigcap Q$, then
	$z_{\alpha_1} \in Z(U_{r,s})\subset Z(\breve{U}_{r,s})$ is given by
	\begin{equation*}
		\begin{aligned}
			z_{\alpha_1} &= \sum_{\tau=0,\pm \alpha_1}
			\sum_{\mu=0, \alpha_1, 2\alpha_1}
			(rs^{-1})^{- (\rho,\tau+\mu)}
			\langle {\omega'_\mu}, \omega_{\tau+\mu} \rangle
			tr(v^\mu u^\mu \circ P_\tau) \hspace{0.1cm}
			v^\mu \omega'_{\tau} \omega^{-1}_{\tau+\mu} u^\mu\\
			&= 1+ r^{-1} s \; \omega' \omega^{-1} +  r s^{-1}  {\omega'}^{-1} \omega \\
			& \hspace{0.7cm}+ (rs)^{-1} (s+r)(s-r)^2  f({\omega'}^{-1} +  \omega^{-1})e + (rs)^{-1}(s-r)^4  f^2 {\omega'}^{-1} \omega^{-1} e^2\\
			&= \frac{(s-r)^4}{rs} z_*^{-1} \mathcal{C}^2 - 1
			= z_{\varpi_1}^2 - 1 ,
		\end{aligned}
	\end{equation*}
	which also corresponds to the decomposition $L(\varpi_1)^{\otimes 2} \cong L(\alpha_{1}) \oplus L(0)$.
\end{example}

\begin{theorem} \label{z*^}
	When rank $n$ is odd, the element $z_{*}^{\frac{1}{\ell}}:= \prod_{i=1}^{n} (\omega_i \omega'_i)^{\frac{1}{\ell} v^*_i}$ is well-defined in $\breve{U}_{r,s}(\mathfrak{g})$, where $\ell=2$, except $\ell=4$ for $D_{2k+1}$, and the vector $v^*$ is given in Proposition \ref{InvR+S}.
\end{theorem}
\begin{proof}
	Since the element $z_{*}^{\frac{1}{\ell}} = \prod_{j=1}^{n} (\omega_j \omega'_j)^{\frac{1}{\ell} v^*_j} = \omega_{\varkappa} \omega'_{\varkappa}$, where
	\begin{equation*}
		\varkappa = \frac{1}{\ell} \sum_{j=1}^{n} \alpha_j v^*_j  = \frac{1}{\ell} \sum_{i,j=1}^{n} \varpi_i  C_{ij} v^*_j  = \sum_{i=1}^{n} \left(\frac{1}{\ell} \sum_{j=1}^{n} C_{ij} v^*_j\right) \varpi_i,
	\end{equation*}
	if $z_{*}^{\frac{1}{\ell}} \in \breve{U}_{r,s}(\mathfrak{g})$, then the coefficient $\frac{1}{\ell} \sum_{j=1}^{n} C_{ij} v^*_j \in \mathbb{Z}$ for each $i=1, 2, \cdots, n$. This is true since $C v^*$ satisfies
	\begin{center}
		\begin{tabular}{|c|l|}
			\hline
			$A_{2k+1}: \; \mathfrak{sl}_{2k+2}$&
			$C v^{*}=(2,-2,2,-2,\cdots,2,-2,2)^T$\\
			$B_{2k+1}:\; \mathfrak{so}_{4k+3}$&
			$C v^{*}=(2,-2,2,-2,\cdots,2,-2,2)^T$\\						
			$C_{2k+1}:\; \mathfrak{sp}_{4k+2}$&
			$C v^{*}=(4,-4,4,-4,\cdots,4,-4,2)^T$\\	
			$D_{2k+1}:\; \mathfrak{so}_{4k+2}$&
			$C v^{*}=(4,-4,4,-4,\cdots,4,-4,0)^T$\\		
			$E_7$&
			$C v^{*}=(2,-2,2,-2,-2,0,0)^T$\\	
			\hline										
		\end{tabular}
	\end{center}	
\end{proof}

Finally, by the fact that $(\breve{U}^0_\flat)^W $ has a basis $\{ \operatorname{av}(\lambda) \mid \lambda \in \Lambda^+\}$, repeating the proof of Theorem \ref{HC iso}, one gets:
\begin{theorem}
	The Harish-Chandra homomorphism $\breve{\xi}: Z(\breve{U}_{r,s}(\mathfrak{g})) \longrightarrow (\breve{U}^0_\flat)^W$ is an isomorphism of algebras when rank $n$ is even. In particular, for each $\lambda \in \Lambda^+$,
	\begin{align*}
		\breve{\xi}(z_\lambda) = \sum_{\mu \leqslant \lambda} \operatorname{dim} (L(\lambda)_\mu) \, \omega'_\mu \omega_{-\mu}.
	\end{align*}
	When rank $n$ is odd, we have $\breve{\xi}$ is injective and $\operatorname{Im}(\breve{\xi}) \supseteq (\breve{U}^0_\flat)^W \otimes \mathbb{K}[z_*^{\frac{1}{\ell}}, z_*^{-\frac{1}{\ell}}]$, where $\ell=2$, except $\ell=4$ for $D_{2k+1}$.
\end{theorem}

Now we are able to construct the character map to study the centre $Z(\breve{U}_{r,s}(\mathfrak{g}))$.
\begin{proposition}
	Let $K({U}_{r,s}) := {Gr}(\mathcal{O}^{r,s}_f) \otimes_\mathbb{Z} \mathbb{K},$ then the character map $\operatorname{Ch}_{r,s}: K({U}_{r,s}) \longrightarrow (\breve{U}^0_\flat)^W$ is an isomorphism of algebras with
	\begin{align*}
		\operatorname{Ch}_{r,s}([V]) = \sum_{\mu \leqslant \lambda} \operatorname{dim} (V_\mu) \, \omega'_\mu \omega_{-\mu}, \quad \forall \,  V \in \mathcal{O}^{r,s}_f.
	\end{align*}
\end{proposition}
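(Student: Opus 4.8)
The plan is to verify in turn that $\operatorname{Ch}_{r,s}$ is well defined on $K(U_{r,s})$, that its image lies in $(\breve{U}^0_\flat)^W$, that it is a homomorphism of unital algebras, and finally that it is bijective; the last point will be inherited from the Harish--Chandra theorem for $\breve{U}$ just established, via the identity $\operatorname{Ch}_{r,s}([L(\lambda)]) = \breve{\xi}(z_\lambda)$. First I would check well-definedness. For $V \in \mathcal{O}^{r,s}_f$ the defining condition $\operatorname{Wt}(V) \subseteq \Lambda$ guarantees that every $\omega'_\mu \omega_{-\mu}$ occurring is a genuine element of $\breve{U}^0_\flat$, so the formula $\operatorname{Ch}_{r,s}([V]) = \sum_\mu \operatorname{dim}(V_\mu)\,\omega'_\mu \omega_{-\mu}$ makes sense. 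Since in a short exact sequence $0 \to V' \to V \to V'' \to 0$ one has $\operatorname{dim}(V_\mu) = \operatorname{dim}(V'_\mu) + \operatorname{dim}(V''_\mu)$ for every $\mu$, the assignment is additive and descends to $\operatorname{Gr}(\mathcal{O}^{r,s}_f)$, hence to $K(U_{r,s})$ after tensoring with $\mathbb{K}$. For $W$-invariance, note that any $V$ has the same weight multiplicities as the direct sum of its composition factors $L(\lambda)$; by Theorem \ref{Weyl-dim} these satisfy $\operatorname{dim}(L(\lambda)_\mu) = \operatorname{dim}(L(\lambda)_{\sigma(\mu)})$, so $\operatorname{dim}(V_\mu) = \operatorname{dim}(V_{\sigma(\mu)})$ for all $\sigma \in W$, which is exactly $\sigma(\operatorname{Ch}_{r,s}([V])) = \operatorname{Ch}_{r,s}([V])$.

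Next I would establish multiplicativity. The product on $K(U_{r,s})$ is induced by $\otimes$, and the weight spaces satisfy $(V \otimes V')_\nu = \bigoplus_{\mu + \mu' = \nu} V_\mu \otimes V'_{\mu'}$, whence $\operatorname{dim}((V\otimes V')_\nu) = \sum_{\mu + \mu' = \nu} \operatorname{dim}(V_\mu)\operatorname{dim}(V'_{\mu'})$. Because the $\omega_{\varpi_i}, \omega'_{\varpi_i}$ are group-like, one has $\omega'_\mu \omega_{-\mu}\cdot \omega'_{\mu'}\omega_{-\mu'} = \omega'_{\mu+\mu'}\omega_{-(\mu+\mu')}$ in $\breve{U}^0_\flat$; comparing coefficients gives $\operatorname{Ch}_{r,s}([V]\,[V']) = \operatorname{Ch}_{r,s}([V])\operatorname{Ch}_{r,s}([V'])$. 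The trivial module maps to $1$, so $\operatorname{Ch}_{r,s}$ is a homomorphism of unital algebras.

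Finally, bijectivity. The classes $\{[L(\lambda)] \mid \lambda \in \Lambda^+\}$ form a $\mathbb{K}$-basis of $K(U_{r,s})$, and by the formula for $\breve{\xi}(z_\lambda)$ in the preceding Harish--Chandra theorem for $\breve{U}$ we have $\operatorname{Ch}_{r,s}([L(\lambda)]) = \breve{\xi}(z_\lambda) = \sum_{\mu \leq \lambda}\operatorname{dim}(L(\lambda)_\mu)\,\omega'_\mu\omega_{-\mu}$. Expanding this in the basis $\{\operatorname{av}(\mu) \mid \mu \in \Lambda^+\}$ of $(\breve{U}^0_\flat)^W$ and using $\operatorname{dim}(L(\lambda)_\lambda) = 1$ exactly as in the proof of Theorem \ref{HC iso}, the coefficient of $\operatorname{av}(\lambda)$ is a nonzero scalar while all remaining terms involve $\operatorname{av}(\mu)$ with $\mu < \lambda$. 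This is a unitriangular change of basis with respect to the height order, so $\{\operatorname{Ch}_{r,s}([L(\lambda)]) \mid \lambda \in \Lambda^+\}$ is again a basis of $(\breve{U}^0_\flat)^W$; hence $\operatorname{Ch}_{r,s}$ carries a basis to a basis and is an isomorphism of algebras.

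The step I expect to require the most care is multiplicativity, namely matching the Grothendieck ring product to the group-like multiplication in $\breve{U}^0_\flat$ and confirming the tensor weight-space decomposition; the $W$-invariance of the image hinges on the deformation/equivalence input behind Theorem \ref{Weyl-dim}, but once that symmetry and the $\operatorname{av}$-basis are in hand, the remaining points are essentially bookkeeping parallel to the proof of the extended Harish--Chandra theorem.
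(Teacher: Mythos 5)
Your proof is correct and takes essentially the same approach as the paper: multiplicativity comes from the tensor weight-space decomposition together with the group-like multiplication in $\breve{U}^0_\flat$, and bijectivity from the identity $\operatorname{Ch}_{r,s}([L(\lambda)]) = \breve{\xi}(z_\lambda)$ combined with the unitriangular expansion in the basis $\{\operatorname{av}(\mu)\}$ already used in the proof of Theorem \ref{HC iso}. The only cosmetic differences are that the paper establishes injectivity directly from the linear independence of the elements $\omega'_\mu\omega_{-\mu}$ rather than folding it into a basis-to-basis argument, and that your preliminary checks (descent to the Grothendieck ring, $W$-invariance of the image via Theorem \ref{Weyl-dim}) are left implicit there.
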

\begin{proof}
	Since $ (V\otimes W)_\mu = \bigoplus_{\lambda\in\Lambda} V_\lambda \otimes W_{\mu-\lambda}$ still holds in $\mathcal{O}^{r,s}_f$, it is clear that $\operatorname{Ch}_{r,s}$ is a homomorphism of algebras. Next, if $\operatorname{Ch}_{r,s}([V])=\operatorname{Ch}_{r,s}([W])$ for some $V,W \in \mathcal{O}^{r,s}_f,$ then
	\begin{align*}
		\sum_{\mu \in \Pi(V)} \operatorname{dim} (V_\mu) \, \omega'_\mu \omega_{-\mu} = \sum_{\nu \in \Pi(W)} \operatorname{dim} (W_\nu) \, \omega'_\nu \omega_{-\nu},
	\end{align*} where $\Pi(V)$ is the weight set of $V$.
	Since $\omega'_\mu \omega_{-\mu}$ with $\mu \in \Lambda$ are linear independent, we have $\operatorname{dim}(V_\mu) = \operatorname{dim}(W_\mu)$ for any $\mu,$ i.e., $[V] = [W].$ Finally, for any $\lambda \in \Lambda^+,$ we have $\operatorname{Ch}_{r,s}([L(\lambda)]) = \breve{\xi}(z_\lambda)$. Parallel to the proof of Theorem \ref{HC iso}, $\operatorname{Ch}_{r,s}$ is also surjective.
\end{proof}

In \cite{HP12}, Hu and Pei have studied the deformation theory of the representations of two-parameter quantum groups $U_{r,s}(\mathfrak{g})$, where
$\mathcal{O}^{r,s}_f$ is defined as the category of finite-dimensional weight modules (of type $1$) of $U_{r,s}(\mathfrak{g})$, and proved that
\begin{theorem}\label{Deformation Theory}
	Assume that $rs^{-1}=q^2,$ there is an equivalence as braided tensor categories that
	\begin{align*}		
		\mathcal{O}^{r,s}_f \simeq \mathcal{O}^{q,q^{-1}}_f \simeq \mathcal{O}^{q}_f,
	\end{align*}
	where $\mathcal{O}^{q}_f$ is the category of the finite-dimensional weight modules $($of type $1$$)$ of the quantum group $U_q(\mathfrak{g})$, and the equivalence takes $L(\lambda) \in \mathcal{O}^q_f$ to a deformed weight module in $\mathcal{O}^{r,s}_f$ which is just $L(\lambda)$ defined in Lemma \ref{irrmod}.
\end{theorem}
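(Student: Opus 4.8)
The plan is to realize $U_{r,s}(\mathfrak{g})$ as a \emph{Drinfeld twist} of the symmetric two-parameter algebra $U_{q,q^{-1}}(\mathfrak{g})$ and to exploit the general fact that a Drinfeld twist induces an equivalence of braided tensor categories of modules. Writing $r = qp$ and $s = q^{-1}p$ with $p = \sqrt{rs}$, the hypothesis $rs^{-1} = q^2$ is exactly what splits each structure constant as $a_{ij} = r^{R_{ij}} s^{S_{ij}} = q^{(R-S)_{ij}}\, p^{(R+S)_{ij}}$, where the symmetric part $R - S = DC$ (Proposition \ref{sym}) reproduces the one-parameter Cartan data and the remaining part $R+S$ carries all the $p$-dependence. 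This decomposition suggests that the passage from $q$ to $(r,s)$ is governed entirely by a diagonal, torus-supported twist built from the matrix $R+S$.

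Concretely, first I would define a bicharacter $\sigma\colon \Lambda \times \Lambda \to \mathbb{K}^\times$, diagonal on weight spaces, whose value on $(\mu,\nu)$ is the $p$-power dictated by $R+S$, together with the corresponding element $J$ in a completion of $\breve{U}^0 \otimes \breve{U}^0$ acting on $M_\mu \otimes N_\nu$ by the scalar $\sigma(\mu,\nu)$. On the category of weight modules this $J$ is unambiguous, since it acts by a single scalar on each finite-dimensional weight space, so no analytic completion issue arises. I would then check that $J$ is a $2$-cocycle and that the twisted Hopf algebra $\bigl(U_{q,q^{-1}}(\mathfrak{g})\bigr)^{J}$ is isomorphic to $U_{r,s}(\mathfrak{g})$: the algebra structure is unchanged, while the twisted coproduct $\Delta^{J}(x) = J\,\Delta(x)\,J^{-1}$ reproduces the comultiplication of Proposition \ref{Hopf}. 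The decisive computation is on the generators $e_i, f_i$: since $e_i$ has weight $\alpha_i$, conjugation by the diagonal $J$ inserts precisely the factors $\omega_i, \omega'_i$ that distinguish $\Delta_{r,s}$ from $\Delta_q$, and matching the scalars forces the exponents to equal the entries of $R+S$.

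Next I would invoke the general principle that a Drinfeld twist $J$ transforms the universal $R$-matrix by $R \mapsto J_{21} R J^{-1}$, and that the identity functor on objects and morphisms, equipped with the tensor structure supplied by the action of $J$, is an equivalence $\mathrm{Rep}(H) \simeq \mathrm{Rep}(H^{J})$ of \emph{braided} tensor categories. Restricting to the weight modules of type $1$ yields $\mathcal{O}^{q,q^{-1}}_f \simeq \mathcal{O}^{r,s}_f$ as braided tensor categories, and the remaining equivalence $\mathcal{O}^{q,q^{-1}}_f \simeq \mathcal{O}^q_f$ is the direct identification of the symmetric two-parameter quantum group with $U_q(\mathfrak{g})$ via $R-S = DC$ (the pair $\omega_i, \omega'_i$ collapses to the single Cartan generator $K_i$ with $a_{ij} = q^{d_i c_{ij}}$). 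Because the twist is diagonal and leaves the torus action intact, the functor preserves weight-space decompositions; hence it sends the simple module $L(\lambda)$ of $\mathcal{O}^q_f$ to the simple module of $\mathcal{O}^{r,s}_f$ with the same highest weight and the same $\dim L(\lambda)_\mu$ for every $\mu$, as recorded in Lemma \ref{irrmod}.

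I expect the main obstacle to be the explicit verification that $\bigl(U_{q,q^{-1}}\bigr)^{J} \cong U_{r,s}$ as Hopf algebras, i.e. the cocycle identity for $J$ together with the coproduct computation on $e_i, f_i$; this is precisely where the hypothesis $rs^{-1} = q^2$ and the splitting $A = q^{R-S}\,p^{R+S}$ are used, and where one must confirm that the antisymmetric data $R+S$ is exactly absorbed by the twist while the symmetric data $R-S$ remains the genuine one-parameter Cartan matrix. Once the Hopf-algebraic twist is established, the braided tensor equivalence and the preservation of $\dim L(\lambda)_\mu$ follow formally from the functoriality of Drinfeld twisting.
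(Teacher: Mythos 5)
The paper itself offers no proof of this theorem: it is imported verbatim from \cite{HP12}, so the comparison is with the argument given there. Your structural starting point is the correct one --- writing $a_{ij}=q^{(R-S)_{ij}}p^{(R+S)_{ij}}$ with $p=\sqrt{rs}$, recognizing the symmetric part $R-S=DC$ as the one-parameter Cartan datum and the antisymmetric part $R+S$ as the data of a bicharacter twist on the weight lattice, is exactly the mechanism behind the deformation theory of \cite{PHR10} and \cite{HP12}. The step $\mathcal{O}^{q,q^{-1}}_f\simeq\mathcal{O}^{q}_f$ is also essentially right, once one adds the observation that $\omega_i\omega'_i$ acts as the identity on every type-$1$ weight module, so that the category factors through the quotient of $U_{q,q^{-1}}(\mathfrak{g})$ by the Hopf ideal generated by the $\omega_i\omega'_i-1$, which is $U_q(\mathfrak{g})$.

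The genuine gap is that you have dualized the twist the wrong way. A Drinfeld twist $J$ leaves the multiplication of the Hopf algebra untouched and only conjugates the coproduct; but $U_{r,s}(\mathfrak{g})$ and $U_{q,q^{-1}}(\mathfrak{g})$ have formally identical coproducts on generators ($\Delta(e_i)=e_i\otimes 1+\omega_i\otimes e_i$ in both) and genuinely different \emph{algebras}: relation $(X2)$ reads $\omega_ie_j\omega_i^{-1}=r^{R_{ij}}s^{S_{ij}}e_j$ versus $q^{d_ic_{ij}}e_j$, and $(X3)$ has denominator $r_i-s_i$ versus $q_i-q_i^{-1}$. Since the eigenvalue of $\operatorname{ad}(\omega_i)$ on the degree-$\alpha_j$ component is pinned down by the presentation, no construction that keeps the underlying algebra fixed can turn $U_{q,q^{-1}}$ into $U_{r,s}$; your ``decisive computation'' on $\Delta^{J}(e_i)$ is therefore aimed at the wrong target, and conjugating the already-correct coproduct by a diagonal $J$ would in fact introduce spurious group-like factors rather than repair the relations. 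What is needed is the dual notion: a Hopf $2$-cocycle $\sigma:H\otimes H\to\mathbb{K}$ induced by the bicharacter on the grading group, deforming the \emph{multiplication} by $x*_\sigma y=\sum\sigma(x_{(1)},y_{(1)})\,x_{(2)}y_{(2)}\,\sigma^{-1}(x_{(3)},y_{(3)})$ while keeping the coalgebra --- equivalently, a modification of the skew-pairing in the Drinfeld double construction. One then transports weight modules directly, rescaling the action of a homogeneous element of degree $\beta$ on a weight-$\mu$ vector by the bicharacter and equipping the identity functor with the tensor structure acting by the scalar $\sigma(\mu,\nu)$ on $M_\mu\otimes N_\nu$; this is the route of \cite{HP12}, and it is what actually delivers the braided equivalence and the equality of the $\dim L(\lambda)_\mu$. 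With that correction, your outline becomes essentially the published proof.
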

Combining the results in \cite{E95,D23}:
\begin{theorem}	
	The $K(\breve{U}_q):= {Gr}(\mathcal{O}^{q}_f) \otimes_\mathbb{Z} \mathbb{K},$ the Grothendieck ring $($over $\mathbb{K}$$)$ of the category $\mathcal{O}^q_f$ of the quantum group $\breve{U}_q(\mathfrak{g}),$ is a polynomial algebra. More precisely, let $\{\varpi_i\}_{i=1}^n$ be the set of fundamental weights of $\mathfrak{g}$, then
	\begin{align*}		
		Z(\breve{U}_q) \cong K(\breve{U}_q) = \mathbb{K}[\,[L(\varpi_1)],\cdots,[L(\varpi_n)]\,].
	\end{align*}				
\end{theorem}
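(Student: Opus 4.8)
The plan is to separate the statement into its two assertions and supply each from an appropriate source: that the Grothendieck ring $K(\breve{U}_q)$ is a polynomial algebra on the fundamental classes, which is intrinsic to the representation theory of $\mathcal{O}^q_f$; and that $Z(\breve{U}_q)$ is isomorphic to this ring, which is the Harish-Chandra isomorphism of Etingof \cite{E95}, with the detailed argument and explicit generators supplied by Dai \cite{D23}. Composing the two identifications then yields the theorem.

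First I would establish the polynomiality of $K(\breve{U}_q)$. Since $q$ is generic, $\mathcal{O}^q_f$ is semisimple with simple objects $\{L(\lambda)\mid\lambda\in\Lambda^+\}$, so $\operatorname{Gr}(\mathcal{O}^q_f)$ is the free $\mathbb{Z}$-module on $\{[L(\lambda)]\}$ with a commutative multiplication induced by $\otimes$ (commutativity because $[V\otimes W]$ and $[W\otimes V]$ share all composition factors). Passing to characters $[V]\mapsto\sum_\mu \dim(V_\mu)\,e^\mu \in \mathbb{Z}[\Lambda]^W$ realizes this ring inside a commutative domain, and the $\operatorname{ch}L(\lambda)$ are unitriangular in the dominance order, each with $e^\lambda$ as unique maximal term of coefficient $1$. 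For $\lambda=\sum_i a_i\varpi_i$, the highest weight of $\bigotimes_i L(\varpi_i)^{\otimes a_i}$ is $\lambda$ with multiplicity one and all other constituents have strictly smaller highest weight, so
\begin{align*}
\prod_{i=1}^n [L(\varpi_i)]^{a_i} = [L(\lambda)] + \sum_{\mu<\lambda,\; \mu\in\Lambda^+} c_\mu\,[L(\mu)], \qquad c_\mu\in\mathbb{Z}_{\geq 0}.
\end{align*}
Inverting this unitriangular change of basis by downward induction on height shows both that the $[L(\varpi_i)]$ generate and that the monomials $\prod_i[L(\varpi_i)]^{a_i}$ form a $\mathbb{Z}$-basis, hence that the $[L(\varpi_i)]$ are algebraically independent. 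Tensoring with $\mathbb{K}$ gives $K(\breve{U}_q)=\mathbb{K}[[L(\varpi_1)],\dots,[L(\varpi_n)]]$.

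Next I would invoke the center isomorphism. To each $\lambda\in\Lambda^+$ one attaches the quantum-trace central element $c_\lambda=\operatorname{tr}_2\bigl(\Gamma\,(1\otimes\Theta)\bigr)$, the partial trace of $\Gamma=R^{21}R$ on $L(\lambda)$, exactly as in the proposition of Section 5.2 specialized to $\breve{U}_q$; these are the Etingof generators. The assignment $[L(\lambda)]\mapsto c_\lambda$ is a ring homomorphism $K(\breve{U}_q)\to Z(\breve{U}_q)$, and the Harish-Chandra scheme — injectivity through the Rosso/quantum Killing form, the image landing in the Weyl-invariants $(\breve{U}^0_\flat)^W$, and surjectivity through the averaging basis $\operatorname{av}(\lambda)$ — identifies it as an isomorphism, in direct parallel with Theorem \ref{HC iso} but now valid in every rank. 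Composing with the first step gives $Z(\breve{U}_q)\cong K(\breve{U}_q)=\mathbb{K}[[L(\varpi_1)],\dots,[L(\varpi_n)]]$.

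The main obstacle is the surjectivity half of the center isomorphism: one must show that every central element is a $\mathbb{K}$-combination of the $c_\lambda$, i.e. that the Harish-Chandra map lands onto all of $(\breve{U}^0_\flat)^W$ and nothing larger. This is precisely where the non-degeneracy of the Rosso form and the Weyl-dimension symmetry $\dim L(\lambda)_\mu=\dim L(\lambda)_{\sigma(\mu)}$ are indispensable, and it is the substantive content carried by \cite{E95,D23}; the polynomial-ring identification of the first step is then immediate and carries no further difficulty.
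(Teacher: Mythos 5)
Your proposal is correct, but note that the paper offers no proof of this theorem at all: it is stated with the preamble ``Combining the results in \cite{E95,D23}'' and imported wholesale from Etingof and Dai, since it concerns the one-parameter quantum group $\breve{U}_q$ rather than the two-parameter object studied in the paper. What you have done is supply the argument that the paper leaves implicit. Your first half --- semisimplicity of $\mathcal{O}^q_f$ at generic $q$, the unitriangular relation $\prod_i [L(\varpi_i)]^{a_i} = [L(\lambda)] + \sum_{\mu<\lambda} c_\mu [L(\mu)]$, and the inversion of that change of basis to get both generation and algebraic independence --- is a complete, self-contained proof of the polynomiality of $K(\breve{U}_q)$, and it is exactly the mechanism the authors themselves reuse later (in the two-parameter setting) when they prove surjectivity of $\xi$ by induction on $\operatorname{av}(\lambda)$ in Theorem \ref{HC iso}. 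Your second half correctly identifies the substantive content (the quantum-trace elements $c_\lambda = \operatorname{tr}_2(\Gamma(1\otimes\Theta))$, the ring-homomorphism property of $[L(\lambda)]\mapsto c_\lambda$, and the injectivity/image/surjectivity scheme through the Rosso form and Weyl invariance) and honestly defers it to \cite{E95,D23}, which is precisely what the paper does. Your remark that the one-parameter isomorphism holds in every rank --- the parity restriction being an artifact of the singularity of $R+S$ in the two-parameter setting --- is also accurate and worth making explicit. In short: the proposal is sound, and it is strictly more detailed than the paper's treatment of this particular statement.
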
	
\noindent
with Theorem \ref{Deformation Theory}, we arrive at the following
\begin{theorem}	
	The algebra $K(\breve{U}_{r,s}):= {Gr}(\mathcal{O}^{r,s}_f) \otimes_\mathbb{Z} \mathbb{K}$ is a polynomial algebra. When rank $n$ is even, the centre of the extended two-parameter quantum group $\breve{U}_{r,s}(\mathfrak{g})$ is a polynomial algebra $Z(\breve{U}_{r,s}) = \mathbb{K}[z_{\varpi_1},\cdots,z_{\varpi_n}]$. When rank $n$ is odd, the centre $Z(\breve{U}_{r,s}) \supseteq \mathbb{K}[z_{\varpi_1},\cdots,z_{\varpi_n}] \otimes \mathbb{K}[z_*^{\frac{1}{\ell}}, z_*^{-\frac{1}{\ell}}]$, where $\ell=2$, except $\ell=4$ for $D_{2k+1}$.
\end{theorem}

\begin{remark}
	When $n=\text{rank}\,(\mathfrak g)$ is odd, we still cannot claim if the centre of $U_{r,s}(\mathfrak g)$ or $\breve{U}_{r,s}(\mathfrak g)$ is equal to $\mathbb{K}[z_{\varpi_1},\cdots,z_{\varpi_n}] \otimes \mathbb{K}[z_*^{\frac{1}{\ell}}, z_*^{-\frac{1}{\ell}}]$ or not. It remains an open question.		
\end{remark}

	\typeout{get arXiv to do 4 passes: Label(s) may have changed. Rerun}
\end{document}